\Crefname{paragraph}{Section}{Sections}
\numberwithin{equation}{section}
\renewcommand{\leq}{\leqslant}
\renewcommand{\geq}{\geqslant}
\newcommand{\supp}{\mathrm{supp}}
\newcommand{\dive}[1]{\mathrm{div}}
\newcommand\restr[2]{{
		\left.\kern-\nulldelimiterspace 
		#1 
		\vphantom{\big|} 
		\right|_{#2} 
}}
\newcommand{\be}{\begin{equation}}
\newcommand{\ee}{\end{equation}}
\newcommand{\ba}{\begin{eqnarray}}
\newcommand{\ea}{\end{eqnarray}}
\theoremstyle{theorem} 
\newtheorem{theorem}{Theorem}[section]
\newtheorem{proposition}[theorem]{Proposition}
\newtheorem{lemma}[theorem]{Lemma}
\newtheorem{claim}[theorem]{Claim}
\newtheorem{definition}[theorem]{Definition}
\newtheorem{obs}{Remark}[section]
\theoremstyle{definition}
\let\original@addcontentsline\addcontentsline
\newcommand{\dummy@addcontentsline}[3]{}
\newcommand{\DeactivateToc}{\let\addcontentsline\dummy@addcontentsline}
\newcommand{\ActivateToc}{\let\addcontentsline\original@addcontentsline}
\begin{document}

\title{\bf Optimal cost for the null controllability of the Stokes system with controls having $n-1$ components and applications}
\author{Felipe W. Chaves-Silva\thanks{Department of Mathematics, Federal University of Para\'iba, UFPB, CEP 58050-085, Jo\~ao Pessoa-PB, Brazil.
E-mail: {\tt fchaves@mat.ufpb.br}},\, Marcos G. Ferreira-Silva\thanks{Department of Mathematics, Federal University of Para\'iba, UFPB, CEP 58050-085, Jo\~ao Pessoa-PB, Brazil.
E-mail: {\tt marcos.gabriel@academico.ufpb.br}}\, and Diego A. Souza\thanks{Dep.\ EDAN and IMUS, Univ.\ of Sevilla, Aptdo.~1160, 41080 Sevilla, Spain. E-mail: {\tt desouza@us.es}}}

\maketitle
\begin{abstract}
In this work, we investigate the optimal cost of null controllability for the $n$-dimensional Stokes system when the control acts on  $n-1$ scalar components. We establish a novel spectral estimate  for low frequencies of the Stokes operator, involving solely  $n-1$ components,  and use it to show that the cost of controllability with controls having $n-1$ components remains of the same order in time as in the case of controls with $n$ components, namely $\mathcal{O}(e^{C/T})$, i.e. the cost of null controllability is not affected by the absence of one component of the control. We also give several applications of our results.
\end{abstract}

\tableofcontents

\section{Introduction and main results}

The Stokes system, introduced by G. G. Stokes in 1845 in \cite{G.G.Stokes}, describe the flow of viscous incompressible fluids at low Reynolds numbers, where inertia (represented by the nonlinear term) is negligible compared to viscous forces. Essentially,  can be seen as a linearized form of the Navier-Stokes system, giving a simpler physically model for fluid motion.
On the other hand, its linear nature makes it a natural starting point for studying controllability of fluids, offering valuable insights for tackling the more complex systems, like the nonlinear Navier-Stokes system.

Let $ \Omega \subset \mathbb{R}^{n} $, with $ n = 2, 3 $, be a bounded domain with smooth boundary $ \partial\Omega $. Let $T > 0$, and let $\omega $ be a non-empty open subset of $ \Omega $, called \textit{control domain}. Throughout the paper, we adopt the notation $ Q := \Omega\times(0,T) $, $ \Sigma := \partial\Omega\times (0,T) $, and denote by $ \nu(x)$ the outward unit normal vector at a point $x\in \partial\Omega $. 

We begin by introducing the standard functional spaces commonly used in the context of fluid mechanics:
	$$
		L^2_{div}: = \bigl\{u\in L^{2}(\Omega)^{n}:\ \nabla\cdot u \in L^{2}(\Omega)\bigr\}, 
	$$
	$$
		H: = \bigl\{u\in L^2_{div}:\ \nabla\cdot u = 0\ \hbox{in}\ \Omega,\ \ u\cdot\nu = 0\ \hbox{on}\ \partial\Omega\bigr\},
	$$
and
	$$
		V := H\cap H_{0}^{1}(\Omega)^{n}.
	$$

In this work, we are concerned with the controlled:
	\begin{equation}\label{stk.eq}
		\left\{
		\begin{array}{lll}
			u_{t} - \Delta u + \nabla p = f1_{\omega} & \text{in}& Q,\\	
			\nabla\cdot u = 0 & \text{in}& Q,\\
			u = 0 & \text{on}& \Sigma,\\
			u(\cdot,0) = u_{0} & \text{in}& \Omega,
		\end{array}
		\right.
	\end{equation}
where $ u = u(x,t) $ denotes the velocity field, $p = p(x,t)$ is the pressure, $u_{0}$ is the initial profile of the velocity field, and $f=(f_1,f_2,\ldots, f_n)$ is a source (or control) acting in $\omega$ during the time interval $(0,T)$. System \eqref{stk.eq} is a linearized version of the classical Navier-Stokes system, which describes the time evolution of an incompressible viscous fluid within $\Omega$.

\begin{definition}
Let $T>0$. System \eqref{stk.eq} is said to be \textit{null controllable at time $T$} if, for each initial condition $u_{0} \in H$, there exists a control $f \in L^{2}(\omega\times(0,T))^n$ such that the corresponding solution\break $ u\in L^{2}((0,T);V) \cap C^{0}([0,T];H) $ of system \eqref{stk.eq} satisfies
	\begin{equation*}\label{null-cont-prop}
		u(\cdot, T) = 0 \quad \mbox{in}\quad \Omega.
	\end{equation*}
\end{definition}

The null controllability problem for the system \eqref{stk.eq} has been extensively studied in the literature. In particular, this result was obtained by O. Yu. Imanuvilov in \cite{Imanuvilov} for the case where the control region $\omega$ is an open subset of the domain. See also \cite{Cara-Guerrero-Imanuvilov-Puel-0} for a related result, and \cite{Souza-CS-Zhang} for the case where $\omega$ is a measurable set of positive measure.  

The results proved in \cite{Cara-Guerrero-Imanuvilov-Puel-0}  and \cite{Imanuvilov} and  can be stated as follows:
\begin{theorem}
Let $T>0$, and let $\omega$ be a non-empty open subset of $\Omega$. Then system \eqref{stk.eq} is null controllable at time $T$. Moreover, there exists a constant $C_{obs}(T)>0$ such that the control satisfies the  estimate 
	\begin{equation}\label{costcontrol0}
		\|f\|_{L^2(\omega \times (0,T))^{n}} \leq C_{obs}(T) \|u_0\|_{H}.
	\end{equation}
\end{theorem}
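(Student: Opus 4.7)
The plan is to reduce null controllability to an observability inequality for the adjoint system via the Hilbert Uniqueness Method (HUM), and then to establish that observability inequality through a global Carleman estimate for the Stokes operator.

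First, I would fix the backward adjoint system: find $(\varphi,\pi)$ with
\begin{equation*}
-\varphi_t - \Delta \varphi + \nabla \pi = 0,\quad \nabla\cdot\varphi = 0 \text{ in } Q, \quad \varphi = 0 \text{ on } \Sigma, \quad \varphi(\cdot,T)=\varphi_T \in H.
\end{equation*}
By a classical duality argument (penalization of the cost functional $J_\varepsilon(f) = \tfrac12 \|f\|_{L^2(\omega\times(0,T))^n}^2 + \tfrac{1}{2\varepsilon}\|u(\cdot,T)\|_H^2$ and passing to the limit $\varepsilon \to 0^+$), null controllability of \eqref{stk.eq} with the bound \eqref{costcontrol0} is equivalent to the observability inequality
\begin{equation*}
\|\varphi(\cdot,0)\|_H^2 \leq C_{obs}(T)^2 \iint_{\omega\times(0,T)} |\varphi|^2 \,dx\,dt
\end{equation*}
for every $\varphi_T\in H$, with the same constant $C_{obs}(T)$.

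The heart of the argument is then to prove this observability inequality. I would proceed via a global Carleman estimate for the adjoint Stokes system, following the strategy of Imanuvilov. Choose $\omega_0 \Subset \omega$ and a function $\eta \in C^2(\overline{\Omega})$ with $\eta > 0$ in $\Omega$, $\eta = 0$ on $\partial\Omega$, and $|\nabla \eta| > 0$ outside $\omega_0$. Define the standard weights
\begin{equation*}
\alpha(x,t) = \frac{e^{2\lambda\|\eta\|_\infty} - e^{\lambda\eta(x)}}{t(T-t)}, \qquad \xi(x,t) = \frac{e^{\lambda\eta(x)}}{t(T-t)},
\end{equation*}
and establish, for $s,\lambda$ large enough, a weighted estimate of the form
\begin{equation*}
\iint_Q e^{-2s\alpha}\left( s\lambda^2 \xi |\nabla\varphi|^2 + s^3\lambda^4 \xi^3 |\varphi|^2 \right) dx\,dt \leq C \iint_{\omega \times (0,T)} e^{-2s\alpha} s^a \xi^b |\varphi|^2 \,dx\,dt,
\end{equation*}
where the treatment of the pressure is the delicate technical point. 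The standard trick is to apply a scalar Carleman estimate componentwise to a suitable auxiliary function (for instance, $\operatorname{curl}\varphi$ in 2D, or, following Imanuvilov, to use a Stokes-adapted version that eliminates the pressure by exploiting $\nabla\cdot\varphi=0$ and a local pressure estimate on $\omega$ using the equation).

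Finally, I would combine the Carleman estimate with a standard energy dissipation argument: the map $t\mapsto \|\varphi(\cdot,t)\|_H^2$ is nonincreasing as $t$ decreases from $T$, so integrating over $[T/4,3T/4]$ and using the time-localization of the weight $e^{-2s\alpha}$ produces a lower bound for the left-hand side by $c(T)\|\varphi(\cdot,0)\|_H^2$. The right-hand side is bounded by $C(T) \iint_{\omega\times(0,T)}|\varphi|^2$, and dualizing yields the control with \eqref{costcontrol0}.

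The main obstacle is the Carleman estimate itself: the Stokes pressure is nonlocal and cannot be bounded in the same way as a source term in a heat equation, so one must either work with a divergence-free test function, apply the Carleman inequality to derivatives of $\varphi$ that kill $\nabla\pi$, or perform a careful local regularity argument to absorb the pressure contribution using the equation on $\omega$. This is the single point where genuinely new analysis (compared with the scalar heat case) is required.
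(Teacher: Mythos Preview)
The paper does not give its own proof of this theorem: it is quoted as a known result, attributed to Imanuvilov \cite{Imanuvilov} and Fern\'andez-Cara--Guerrero--Imanuvilov--Puel \cite{Cara-Guerrero-Imanuvilov-Puel-0}. Your proposal follows precisely the strategy of those references --- reduce null controllability to an observability inequality for the adjoint system by duality, establish that inequality via a global Carleman estimate for the Stokes operator with the standard weights $\alpha,\xi$ built from a Fursikov--Imanuvilov function, and conclude with the usual energy-dissipation argument on a subinterval where the weight is bounded below. This is correct and is the classical proof; the paper itself contributes nothing new at this point, and indeed remarks that the cost bounds $C_{obs}(T)\leq C_1 e^{C_2/T^8}$ and $C_1 e^{C_2/T^4}$ implicit in those works ``follow from the proofs, which rely on global Carleman inequalities,'' exactly as you outline.
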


The constant $C_{obs}(T)$ in \eqref{costcontrol0}, known as the \textit{cost of controllability at time $T$}, quantifies the  effort required to drive the solution to rest at time $T$. Understanding the behavior of this constant as\break $T \to 0^{+}$ is a topic of interest in Control Theory and has attracted considerable attention in recent years. Among several important contributions, we highlight the works of E. Fernández-Cara and E. Zuazua \cite{Cara-Zuazua}, who investigated the cost of approximate controllability for the heat equation with Dirichlet boundary conditions, and T. Seidman \cite{Seidman}, who provided sharp lower bounds on the cost with respect to time for the heat equation. Later, L. Miller, improved Seidman's results by employing the spectral approach developed by G. Lebeau and L. Robbiano \cite{Lebeau-Robbiano}. We also mention the contributions \cite{BP20, Remi-Ludovick, Remi-Takeo, Lebeau-Felipe}, which address similar questions in the context of fluid mechanics.

For the Stokes system, upper bounds for the cost of controllability can be deduced from the arguments in \cite{Imanuvilov} and \cite{Cara-Guerrero-Imanuvilov-Puel-0}, yielding the estimates
	\begin{equation*}
		C_{obs}(T) \leq C_{1}e^{C_{2}/T^{8}} \quad \mbox{and} \quad C_{obs}(T) \leq C_{1}e^{C_{2}/T^{4}},
	\end{equation*}
 for some positive constants $C_1$ and $C_2$. Although these bounds are not explicitly stated in the respective references, they follow from the proofs, which rely on global Carleman inequalities.

The optimal cost,  with respect to time,  for the Stokes system was later obtained by G. Lebeau and F. W. Chaves-Silva in \cite{Lebeau-Felipe}. Using spectral methods, they proved the sharp estimate
	\begin{equation*}
		C_{obs}(T) \leq C_{1}e^{C_{2}/T},
	\end{equation*}
which matches the optimal cost of null controllability for the heat equation.

It is important to note that the works \cite{Lebeau-Felipe}, \cite{Cara-Guerrero-Imanuvilov-Puel-0}, and \cite{Imanuvilov} consider the case in which the control acts on all components. That is, the control $f$ in \eqref{stk.eq} has $n$ scalar components. This naturally leads to the following question:

\null 

\textit{
Is it possible to drive the solution of system \eqref{stk.eq} to zero using a control with fewer components?
} 

\null 

This question was first investigated by E. Fernández-Cara et al. in \cite{Cara-Guerrero-Imanuvilov-Puel}, where they showed the null controllability of system \eqref{stk.eq} using a control with only $n-1$ components. Their result holds under a geometric condition on the control region -- roughly speaking, when the control domain ``touches'' the boundary $\partial \Omega$. The proof combines the divergence-free condition with the geometric assumption and relies on global Carleman estimates with carefully constructed weight functions. Moreover, their approach yields the following upper bound for the cost of controllability:
	\begin{equation*}
		C_{obs}(T) \leq C_{1}e^{C_{2}/T^{4}}.
	\end{equation*}

Subsequently, in \cite{Coron-Guerrero}, J.-M. Coron and S. Guerrero fully solved the question posed above by removing any geometric assumptions on the control domain. Their approach also relies on global Carleman estimates, applied component-wise, combined with regularity results for the Stokes operator. More precisely, for each $ u_0 \in H $   there exists a control $ f \in L^{2}(\omega \times (0,T))^{n} $, whose $i$-th component vanishes in $Q$, that drives the solution to zero at time $T$, and with a controllability cost satisfying
	\begin{equation*}
		C_{obs}(T) \leq C_{1}e^{C_{2}/T^{9}}.
	\end{equation*}

\begin{obs}
In dimension $n = 3$, it was shown by J.-L. Lions and E. Zuazua in \cite{LionsZuazua} that, in general, it is not possible to drive the solution of system \eqref{stk.eq} to zero using a control with only one nonvanishing component (i.e., with two components identically zero).  In other words, to obtain null controllability for the Stokes one must have controls with at least $n-1$ components.
\end{obs}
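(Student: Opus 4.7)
The strategy is duality. By the Hilbert Uniqueness Method, null controllability of \eqref{stk.eq} with a control active only in the $e_k$ direction is equivalent to the observability inequality
\begin{equation*}
\norme{\phi(\cdot,0)}_H^2 \leq C(T)\int_0^T\!\!\int_\omega |\phi_k(x,t)|^2\,dx\,dt
\end{equation*}
for every solution $\phi$ of the backward adjoint Stokes system. Since the Stokes operator on $H$ is self-adjoint with compact resolvent, it has an orthonormal eigenbasis $\{\varphi_j\}$ with eigenvalues $\lambda_j\to+\infty$; each $\varphi_j$ generates an adjoint solution of the form $e^{\lambda_j(T-t)}\varphi_j$. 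It is thus enough to exhibit one domain $\Omega$ and one Stokes eigenfunction $\varphi$ whose $k$-th scalar component is identically zero throughout $\Omega$, since then any nonempty $\omega\subset\Omega$ violates the above inequality for that eigenfunction.

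A natural candidate is a \emph{toroidal} vector field on a domain of revolution about the $e_k$-axis, for instance the unit ball in $\R^3$. Such a field can be written $\varphi=\nabla\times(h\,e_k)$ for a scalar potential $h=h(\rho,x_k)$ depending only on the cylindrical radius $\rho$ and the axial variable; one checks directly that the $k$-th component of $\varphi$ is identically zero and that $\nabla\cdot\varphi=0$ is automatic from the curl structure. The homogeneous Dirichlet boundary condition on $\partial\Omega$ becomes a condition on $\partial_\rho h$ at the sphere, and the Stokes eigenvalue problem reduces, via separation of variables in spherical coordinates, to a regular Sturm--Liouville problem for the radial profile of $h$, which admits infinitely many nontrivial eigenfunctions. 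The associated pressure vanishes identically, since any axisymmetric gradient field compatible with the toroidal ansatz must vanish in the $e_\theta$ direction.

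The main obstacle is carrying out this reduction rigorously: one must check the axisymmetry of the induced pressure, the regularity and boundary behaviour of $h$ near the axis of revolution, and that the resulting $\varphi$ lies in $V\cap H^2(\Omega)^3$ so as to be a genuine element of the Stokes spectrum. Once such a toroidal eigenfunction is produced for each axis $k\in\{1,2,3\}$, the observability inequality fails in every coordinate direction on the ball, establishing that one-component scalar control is insufficient and hence that controls must retain at least $n-1=2$ scalar components, as claimed.
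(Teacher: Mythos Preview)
The paper does not supply its own proof of this remark; it is stated purely as a citation of the result of Lions and Zuazua \cite{LionsZuazua}. So there is nothing in the paper to compare your argument against in detail.

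That said, your plan is essentially the Lions--Zuazua construction and is sound. The logical skeleton---HUM duality reduces the question to the failure of single-component observability, and a single Stokes eigenfunction with one Cartesian component identically zero suffices to violate it---is exactly right. Your proposed toroidal ansatz $\varphi=\nabla\times(h\,e_k)$ on an axisymmetric domain (the ball being the cleanest choice) does produce divergence-free fields with vanishing $k$-th component, and for the purely toroidal eigenfunctions on the ball the pressure is indeed zero and the eigenvalue problem reduces to a radial Bessel-type ODE with Dirichlet data, so nontrivial smooth eigenfunctions exist. One small slip: the adjoint solution generated by $\varphi_j$ is $e^{-\lambda_j(T-t)}\varphi_j$, not $e^{\lambda_j(T-t)}\varphi_j$; the sign is immaterial to the argument, since all that matters is that the spatial profile---and hence its vanishing $k$-th component---is preserved in time.
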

	
Since the upper bounds for the controllability cost obtained in \cite{Coron-Guerrero} and \cite{Cara-Guerrero-Imanuvilov-Puel} are not of optimal, in this work we establish the optimal upper bound for the cost of controllability of system \eqref{stk.eq} using controls with  $n-1$ components.

Our main result is the following. 
\begin{theorem}\label{main.teo}
Let $T>0$ and let $ \omega $ be a non-empty open subset of $ \Omega$. For every $ u_{0} \in H $, there exists a control $ f \in L^{2}(\omega\times (0,T))^n $, 
with $f := (f_1, \ldots, f_{n-1}, 0)$, such that the associated solution $u$ of the Stokes system \eqref{stk.eq} satisfies
	\begin{equation*}
		u(\cdot,T) = 0 \quad \mbox{in}\ \Omega.
	\end{equation*}
Moreover, there exist $ C_{1}, C_{2} > 0 $, depending only on $ \Omega $ and $ \omega$ and independent of $ T $, such that the control satisfies the estimate
	\begin{equation*}\label{null.control.inq}
		\left(\sum_{k=1}^{n-1}\| f_k \|^2_{L^{2}(\omega\times(0,T))}\right)^{1/2} \leq C_{1}e^{C_{2}/T}\| u_{0} \|_{H}.
	\end{equation*}
\end{theorem}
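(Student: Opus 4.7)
The plan is to prove Theorem \ref{main.teo} by the Lebeau-Robbiano spectral strategy, as in \cite{Lebeau-Felipe} for the case of $n$ components. By duality, the null controllability of \eqref{stk.eq} with cost $C_1 e^{C_2/T}$ using a control of the form $(f_1,\dots,f_{n-1},0)$ is equivalent to an observability inequality for the adjoint Stokes system in which only the first $n-1$ components of the state are measured on $\omega\times(0,T)$. The Lebeau-Robbiano method reduces this observability to a single \emph{low-frequency spectral inequality}: letting $\{(\mu_k,\phi_k)\}_{k\geq 1}$ denote the eigenelements of the Stokes operator in $H$, one wants to show that for every $\lambda>0$ and every finite family $(a_k)$,
\begin{equation*}
\Bigl\| \sum_{\mu_k\leq\lambda} a_k\, \phi_k \Bigr\|_{L^2(\Omega)^n}^2 \leq C_1\, e^{C_2\sqrt{\lambda}}\, \sum_{i=1}^{n-1} \Bigl\| \sum_{\mu_k\leq\lambda} a_k\, (\phi_k)_i \Bigr\|_{L^2(\omega)}^2.
\end{equation*}
Once this inequality is established, the classical telescoping argument---killing all modes with eigenvalue $\leq \lambda_j=2^j$ on dyadic time intervals, and letting parabolic dissipation damp the high modes in between---produces a control of the desired form with the announced optimal cost $e^{C/T}$.

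The heart of the argument is therefore the spectral inequality with observation on only $n-1$ components. For it I would use the classical Lebeau-Robbiano lifting trick: writing $u:=\sum_{\mu_k\leq\lambda} a_k\phi_k$ and denoting by $\pi_k$ the pressure associated to $\phi_k$ (so that $-\Delta\phi_k + \nabla\pi_k = \mu_k\phi_k$, $\nabla\cdot\phi_k = 0$), set
\begin{equation*}
w(x,s) := \sum_{\mu_k\leq\lambda} a_k\, \frac{\sinh(\sqrt{\mu_k}\,s)}{\sqrt{\mu_k}}\, \phi_k(x), \qquad q(x,s) := \sum_{\mu_k\leq\lambda} a_k\, \frac{\sinh(\sqrt{\mu_k}\,s)}{\sqrt{\mu_k}}\, \pi_k(x).
\end{equation*}
A direct computation shows that $(w,q)$ solves the augmented stationary Stokes-type system
\begin{equation*}
-\Delta_x w - \partial_s^2 w + \nabla_x q = 0, \qquad \nabla_x \cdot w = 0 \quad \text{in } \Omega\times\mathbb{R},
\end{equation*}
with $w=0$ on $\partial\Omega\times\mathbb{R}$, $w(\cdot,0)=0$ and $\partial_s w(\cdot,0)=u$; taking the divergence of the first equation moreover forces $\Delta_x q = 0$. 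The spectral inequality then reduces to a quantitative unique-continuation estimate for $(w,q)$ in $\Omega\times(-1,1)$ from the observation of the $n-1$ components $w_1,\dots,w_{n-1}$ on $\omega\times(-1,1)$, with an explicit constant matching the exponential growth of $\sinh(\sqrt{\mu_k}s)$ in $s$.

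The decisive technical step---and the main obstacle---is to prove such a Carleman/interpolation estimate for this elliptic Stokes system with observation on only $n-1$ components. My plan is to apply a scalar Lebeau-Robbiano Carleman estimate for the operator $-\Delta_x-\partial_s^2$ component by component to $w_1,\dots,w_{n-1}$, treating the pressure terms $\partial_{x_i} q$ as sources and exploiting the harmonicity of $q$ in $x$ together with suitable regularity estimates for the stationary Stokes system, and then to use the incompressibility identity $\partial_{x_n} w_n = -\sum_{i<n}\partial_{x_i} w_i$ with the homogeneous boundary condition $w_n|_{\partial\Omega}=0$ to reconstruct the missing component $w_n$, in the spirit of the component-wise approach of \cite{Coron-Guerrero} adapted to the elliptic setting. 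The delicate point will be to choose the Carleman weights so as to simultaneously absorb the nonlocal pressure gradient, recover $w_n$ with only an $L^2$ loss, and keep the $\lambda$-dependence of the final constant confined to the factor $e^{C\sqrt{\lambda}}$; once this is achieved, the Lebeau-Robbiano iteration yields Theorem \ref{main.teo}.
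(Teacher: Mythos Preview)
Your overall architecture---duality, the $\sinh$-lifting to the augmented elliptic Stokes system, a spectral inequality with $(n-1)$-component observation, then the Lebeau--Robbiano telescoping---matches the paper exactly, and you have correctly located the crux in the spectral inequality.

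The gap is in your proposed route to that inequality. You plan to apply scalar Carleman estimates for $-\Delta_x-\partial_s^2$ to $w_1,\dots,w_{n-1}$ with the pressure gradients $\partial_{x_i}q$ carried as source terms, and then absorb them using harmonicity of $q$ and Stokes regularity. The obstruction is that $\nabla_x q$ is of the same order as the principal part, not a lower-order perturbation; in fact the paper points out (Remark after \eqref{elptc-system-boundary-conditions}) that the augmented system $-\partial_{ss}^2 w-\Delta_x w+\nabla_x q=0$, $\nabla_x\cdot w=0$ \emph{fails local unique continuation}, so no local Carleman estimate can hold for it with the pressure kept inside. Any attempt to close the estimate then requires a genuinely nonlocal control of the pressure, and experience from the parabolic Carleman approaches in \cite{Coron-Guerrero,Cara-Guerrero-Imanuvilov-Puel} shows that this is exactly what degrades the constant and produces the suboptimal costs $e^{C/T^9}$ and $e^{C/T^4}$. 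Your ``delicate point'' is thus not merely a matter of tuning weights; as stated, the scheme is unlikely to deliver $e^{C\sqrt\lambda}$.

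The paper sidesteps this by applying $\Delta_x$ to the first $n-1$ components of $w$: since $\Delta_x q=0$, the pressure is \emph{eliminated}, and $\widehat v_\Lambda:=\Delta_x(w_1,\dots,w_{n-1},0)$ solves the pure scalar equation $(-\partial_{ss}^2-\Delta_x)\widehat v_\Lambda=0$ (system \eqref{elptc-system-2}). The price is that $\widehat v_\Lambda$ no longer vanishes on $(0,1)\times\partial\Omega$, so the boundary Carleman estimate acquires unknown trace terms; these are controlled by a semiclassical analysis with parameter $h=\delta\Lambda^{-1/2}$ (Theorem \ref{carleman.estimate}, Propositions \ref{Prop:w2-estimate} and \ref{Proposition:(1-Bs)w_1--estimate}), which is the real technical work. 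Recovery of the full $n$-component norm from $\widehat v_\Lambda$ then uses the equivalence of $\|\Delta_x\cdot\|$ with the $H^2$-norm combined with the divergence-free condition (Lemma \ref{Lemma:sum.to.n-1.components}); this is why the paper uses $\Delta_x$ rather than $\nabla_x\Delta_x$ as in \cite{Coron-Guerrero}.
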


This theorem shows that the optimal cost of control for the Stokes system \eqref{stk.eq} has the same asymptotic behavior in time, regardless of whether the control acts on all  $n$ or only $n-1$ components

\begin{obs}
In Theorem \ref{main.teo}, the assumption that the $n$-th component of the control vanishes in $Q$ is made for notational convenience. In fact, any component may be set to zero, that is, for any $i \in \{1,\dots,n\}$, one may assume that the $i$-th component vanishes while controlling the remaining $n-1$ components. This does not affect the cost estimate.
\end{obs}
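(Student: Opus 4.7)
The plan is to exploit the invariance of the Stokes system under orthogonal changes of coordinates. Given $i\in\{1,\ldots,n-1\}$ (the case $i=n$ is precisely Theorem \ref{main.teo}), let $\sigma\in O(n)$ denote the linear map that swaps the $i$-th and $n$-th canonical basis vectors, and set $\tilde{\Omega}:=\sigma(\Omega)$, $\tilde{\omega}:=\sigma(\omega)$, and $\tilde{u}_0(y):=\sigma\,u_0(\sigma^{-1}y)$. Since $\sigma$ is an isometry, $\tilde{\Omega}$ is a bounded domain with smooth boundary, $\tilde{\omega}$ is a non-empty open subset of $\tilde{\Omega}$, and $\tilde{u}_0\in H(\tilde{\Omega})$ with $\|\tilde{u}_0\|_{H(\tilde{\Omega})}=\|u_0\|_{H(\Omega)}$.

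The key observation is the covariance of each equation in \eqref{stk.eq} under the action $(u,p,f)\mapsto\bigl(\sigma u(\sigma^{-1}\cdot,\cdot),\,p(\sigma^{-1}\cdot,\cdot),\,\sigma f(\sigma^{-1}\cdot,\cdot)\bigr)$: since $\sigma\sigma^{\top}=I$, a direct chain-rule computation gives $\nabla\cdot\tilde u=(\nabla\cdot u)\circ\sigma^{-1}$, $\Delta\tilde u=\sigma\,(\Delta u)\circ\sigma^{-1}$, and $\nabla\tilde p=\sigma\,(\nabla p)\circ\sigma^{-1}$, while the homogeneous Dirichlet condition on $\partial\tilde\Omega$ corresponds to the homogeneous Dirichlet condition on $\partial\Omega$. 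Hence, applying Theorem \ref{main.teo} to the triple $(\tilde\Omega,\tilde\omega,\tilde u_0)$ produces a control $\tilde f=(\tilde f_1,\ldots,\tilde f_{n-1},0)$ steering the corresponding solution $\tilde u$ of the Stokes system on $\tilde\Omega$ to zero at time $T$, and satisfying
$$\Bigl(\textstyle\sum_{k=1}^{n-1}\|\tilde f_k\|_{L^2(\tilde\omega\times(0,T))}^2\Bigr)^{1/2}\leq \tilde C_1e^{\tilde C_2/T}\|\tilde u_0\|_{H(\tilde\Omega)},$$
with $\tilde C_1,\tilde C_2$ depending only on $\tilde\Omega$ and $\tilde\omega$.

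Pulling back by $\sigma^{-1}$ yields a solution $(u,p)$ of \eqref{stk.eq} on $\Omega$ driven to zero at time $T$ by the control $f(x,t):=\sigma^{-1}\tilde f(\sigma x,t)$. Because $\sigma^{-1}$ swaps the $i$-th and $n$-th components, the condition $\tilde f_n\equiv 0$ transfers to $f_i\equiv 0$, and the $L^2$-norms of $f$ and $\tilde f$ coincide since $\sigma$ is a measure-preserving isometry. This produces the bound $C_1e^{C_2/T}\|u_0\|_H$ for constants $C_j=C_j(\Omega,\omega,i)$, with the same exponential-in-$1/T$ behaviour. I do not foresee any real obstacle: the only points requiring care are verifying the covariance of each differential operator under $\sigma$ (a routine linear-algebra calculation) and observing that the regularity and open-set assumptions on the control region are preserved under $\sigma$.
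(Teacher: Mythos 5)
Your proof is correct, and it rigorously formalizes the paper's remark, which is stated without proof on the grounds of ``notational convenience.'' The paper's implicit justification is simply that nothing in the proof of Theorem \ref{main.teo} (or of the underlying spectral, interpolation, and observability inequalities) privileges the $n$-th component over any other — one could re-run the argument verbatim observing any fixed set of $n-1$ components, the only genuinely component-sensitive step being the divergence-free argument in Lemma \ref{Lemma:sum.to.n-1.components}, which recovers the omitted component from the remaining ones and is visibly symmetric under relabeling. Your approach instead treats Theorem \ref{main.teo} as a black box and conjugates by the transposition $\sigma\in O(n)$ swapping $e_i$ and $e_n$; the covariance computations you cite ($\Delta\tilde u=\sigma(\Delta u)\circ\sigma^{-1}$, $\nabla\tilde p=\sigma(\nabla p)\circ\sigma^{-1}$, $\nabla\cdot\tilde u=(\nabla\cdot u)\circ\sigma^{-1}$) are correct, $\sigma(\Omega)$ and $\sigma(\omega)$ retain the required regularity and openness, and since $\sigma$ is a measure-preserving isometry the $H$- and $L^2$-norms are preserved, so the cost estimate carries over unchanged. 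The two routes are morally identical — both are a permutation of component indices — but yours has the advantage of not requiring one to re-inspect the whole proof; it also makes plain that the constants $C_1,C_2$ can in fact be taken independent of $i$, since $(\sigma(\Omega),\sigma(\omega))$ is isometric to $(\Omega,\omega)$.
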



To prove Theorem \ref{main.teo}, we consider  (after the change of variables $t \mapsto T-t$) the adjoint system of \eqref{stk.eq}:
\begin{equation}\label{Adjoint.stk.eq}
	\left\{
	\begin{array}{lll}
		z_{t} - \Delta z + \nabla\pi = 0  & \mbox{in} & Q,\\
		\nabla \cdot z = 0 			      & \mbox{in} & Q,\\
		z = 0                             & \mbox{on} & \Sigma,\\
		z(\cdot,0) = z_{0}                      & \mbox{in} & \Omega.
	\end{array}
	\right.
\end{equation}

It is well known that null controllability of \eqref{stk.eq} is equivalent to an appropriate observability inequality for solutions of \eqref{Adjoint.stk.eq}. Due to the absence of one component in the control, the local term in the required observability inequality involves only $n-1$ components. More precisely, we have the following result:
\begin{theorem}[Observability inequality]\label{Theorem:Observability-Inequality}
Let $T>0$ and let $ \omega $ be a non-empty open subset of $ \Omega $. There exist constants $ C_{1}, C_{2} > 0 $, depending only on $ \Omega $ and $ \omega $ and independent of $T$, such that for every initial datum $z_{0} \in H$, the solution $z = (z_{1}, z_{2}, \ldots, z_{n})$ to \eqref{Adjoint.stk.eq} satisfies
	\begin{equation}\label{adj.obs}
		\| z(\cdot,T)\|_{H} \leq C_{1}e^{C_{2}/T}\left(\int_{0}^{T}\int_{\omega}\sum_{k=1}^{n-1}|z_{k}(x,t)|^{2}dx dt\right)^{\frac{1}{2}}.
	\end{equation}
\end{theorem}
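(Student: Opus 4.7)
The plan is to deduce \eqref{adj.obs} from the Lebeau--Robbiano spectral strategy, adapted to the Stokes operator and to the partial observation on only $n-1$ components. Let $A$ denote the Stokes operator on $H$, with eigenvalues $0<\mu_1\leq\mu_2\leq\cdots\to+\infty$ and associated orthonormal basis $(\phi_j)_{j\geq 1}$ of $H$. For $\mu>0$, let $E_\mu:=\operatorname{span}\{\phi_j:\mu_j\leq\mu\}$ and let $\Pi_\mu$ be the orthogonal projector of $H$ onto $E_\mu$.

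The cornerstone of the proof is a spectral inequality of Lebeau--Robbiano type, involving only $n-1$ components: there exist $C,\kappa>0$, depending only on $\Omega$ and $\omega$, such that for every $\mu\geq 1$ and every $\psi\in E_\mu$,
\begin{equation}\label{plan:spectral}
\|\psi\|_H^2 \leq C\,e^{\kappa\sqrt{\mu}}\int_\omega \sum_{k=1}^{n-1}|\psi_k(x)|^2\,dx.
\end{equation}
This is the \emph{novel spectral estimate for low frequencies} announced in the abstract; establishing it is the main obstacle. Once \eqref{plan:spectral} is available, \eqref{adj.obs} follows from the standard Lebeau--Robbiano telescoping argument: I would split $[0,T]$ into dyadic subintervals, apply \eqref{plan:spectral} to $\Pi_{\mu_k}z(t)$ on each piece, combine with the parabolic dissipation inequality $\|(I-\Pi_\mu)z(t_2)\|_H\leq e^{-\mu(t_2-t_1)}\|(I-\Pi_\mu)z(t_1)\|_H$, and iterate on dyadic levels $\mu_k=2^k$ in such a way that the growth $e^{\kappa\sqrt{\mu_k}}$ is absorbed by the dissipation on intervals of length $\sim T/k^2$. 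This classical balancing yields the sharp cost $C_1 e^{C_2/T}$, with constants independent of $T$, and combined with the monotonicity $\|z(T)\|_H\leq \|z(t)\|_H$ produces exactly the left-hand side of \eqref{adj.obs}.

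The main work is therefore the proof of \eqref{plan:spectral} with one missing component. The strategy I would follow is to reduce the spectral estimate to an elliptic observability question via the classical Lebeau--Robbiano lifting: writing $\psi=\sum_{\mu_j\leq\mu}a_j\phi_j$ and denoting by $p_j$ the pressure associated with the Stokes eigenfunction $\phi_j$, the functions
\[
F(x,s):=\sum_{\mu_j\leq\mu}a_j\phi_j(x)\,\frac{\sinh(\sqrt{\mu_j}\,s)}{\sqrt{\mu_j}},\qquad P(x,s):=\sum_{\mu_j\leq\mu}a_j\,p_j(x)\,\frac{\sinh(\sqrt{\mu_j}\,s)}{\sqrt{\mu_j}}
\]
solve the augmented elliptic Stokes-type system
\[
\partial_s^2 F+\Delta_x F=\nabla_x P,\qquad \nabla_x\cdot F=0\quad\text{in }\Omega\times(-L,L),\qquad F=0\quad\text{on }\partial\Omega\times(-L,L),
\]
together with $F(x,0)=0$ and $\partial_s F(x,0)=\psi(x)$. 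It is then enough to establish a global Carleman estimate for this stationary Stokes system with localized observation of only $n-1$ components of $F$ in $\omega\times(-L,L)$, with a weight that degenerates at $s=\pm L$; evaluating the resulting inequality at $s=0$ yields \eqref{plan:spectral} with the sharp exponent $\sqrt{\mu}$.

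The hard part is precisely this elliptic Carleman inequality with one missing component. I would prove it by combining component-wise scalar Carleman estimates for the Laplace-type equations satisfied by $F_1,\ldots,F_{n-1}$ with a careful exploitation of the divergence-free constraint $\partial_n F_n=-\sum_{k=1}^{n-1}\partial_k F_k$ and of the fact that $P$ is $x$-harmonic, in the spirit of \cite{Coron-Guerrero} but in the stationary elliptic setting. The delicate issue is to recover the unobserved component $F_n$ and the pressure $P$ from the observations on $F_1,\ldots,F_{n-1}$ without losing either derivatives or the right dependence on the Carleman parameter; tracking all these constants through the lifting so that the final exponent in \eqref{plan:spectral} is exactly $\sqrt{\mu}$ (and not $\mu^{\alpha}$ with $\alpha>1/2$) is where the bulk of the technical effort will lie.
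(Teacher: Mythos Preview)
Your high-level plan is correct and matches the paper: one first proves the spectral inequality \eqref{plan:spectral} (this is Theorem~\ref{spct-ineq-theorem}), and then deduces the observability inequality by a Lebeau--Robbiano type iteration. The paper does not use the dyadic telescoping you sketch; instead it follows the Seidman/Miller route (Lemma~\ref{lemma2}): one proves a single approximate observability estimate of the form $h(T)\|z(T)\|_H^2 - g(T)\|z_0\|_H^2 \leq \int_0^T\!\int_\omega \sum_{k<n}|z_k|^2$, with $h(T),g(T)$ of the form $e^{-c/T}$ and $g$ decaying faster than $h$, and then iterates that single estimate. Both iterations are standard and yield the same $e^{C/T}$ cost, so this is a genuine but minor methodological difference.

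The real divergence is in how you propose to obtain the spectral inequality. You correctly identify the lifting to the augmented elliptic Stokes system \eqref{elptc-system}, but you then propose a \emph{global Carleman estimate for that system} with observation on $n-1$ components, ``in the spirit of \cite{Coron-Guerrero}''. This is precisely the step where the paper takes a different and much more specific route, and where your plan is too vague to succeed as stated. Two points:
\begin{itemize}
\item The paper explicitly observes that the augmented system \eqref{elptc-system} \emph{fails} the local unique continuation property (Remark after \eqref{elptc-system-boundary-conditions}), so no local Carleman estimate exists for it with the pressure present. Your proposal to run component-wise Carleman estimates directly on $F_1,\dots,F_{n-1}$ and recover $F_n$ and $P$ afterwards does not by itself circumvent this obstruction, and the ``delicate issue'' you flag (recovering $F_n$ and $P$ without degrading the exponent) is exactly what a direct Carleman-on-Stokes approach has historically failed to deliver at optimal cost.
\item The paper's key idea is instead to apply $\Delta_x$ (and \emph{not} $\nabla_x\Delta_x$ as in \cite{Coron-Guerrero}) to the first $n-1$ components, obtaining the pressure-free elliptic system \eqref{elptc-system-2} for $\widehat v_\Lambda = \Delta_x\widehat u_\Lambda$. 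The reason for using $\Delta_x$ rather than $\nabla_x\Delta_x$ is that $u\mapsto\|\Delta_x u\|$ is equivalent to the $H^2$-norm on $D(\mathcal A)$, so one can recover the full $L^2$-norm of $u_\Lambda$ (including the missing $n$-th component, via divergence-free and Poincar\'e) from $\|\widehat v_\Lambda\|$. The price is that $\widehat v_\Lambda$ has \emph{unknown} Dirichlet data on $(0,1)\times\partial\Omega$, and the bulk of the paper's work (Theorem~\ref{thrm.int.inq} and the appendices) is a semiclassical analysis---introducing $h=\delta/\sqrt{\Lambda}$, splitting the boundary trace into low/high tangential frequencies via a pseudodifferential cutoff, and proving that the high-frequency part is harmless while the low-frequency part can be absorbed---that produces the interpolation inequality \eqref{interpolation-inequality}, from which \eqref{spectral-inequality-1} follows.
\end{itemize}
So your outline is sound at the strategic level, but the concrete mechanism you propose for the spectral inequality would need to be replaced by (or shown equivalent to) the $\Delta_x$-reduction plus semiclassical boundary analysis that the paper carries out.
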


The main ingredient to prove Theorem \ref{Theorem:Observability-Inequality} is a \textit{spectral inequality} for low-frequencies of the Stokes operator involving only $n-1$ components in the observation term. The result can be stated as follows:
\begin{theorem}\label{spct-ineq-theorem}
Let $ \omega \subset \Omega $ be a non-empty open set. There exist constants $ M, K, \Lambda_1 > 0 $ such that for every sequence of complex numbers $ (a_{j})_{j\geq1} \in \ell^{2}$ and every $ \Lambda \geq \Lambda_{1} $, we have\footnote{Recall that $\ell^2\triangleq\left\{(a_j)_{j\geq1}:\,
	\sum_{j=1}^{+\infty}a_{j}^{2} < +\infty\right\} $.}
	\begin{equation}\label{spectral-inequality-1}
		\sum_{\mu_{j}\leqslant\Lambda}|a_{j}|^{2} 
		 \leq Me^{K\sqrt{\Lambda}}\int_{\omega}\sum_{k=1}^{n-1}\biggl|\sum_{\mu_{j}\leqslant \Lambda}a_{j}e_{j,k}(x)\biggr|^{2}dx,
	\end{equation} 
 where $\{e_j\}_{j\geq1}$ and $\{\mu_j\}_{j\geq1}$ denotes the eigenfunctions and eigenvalues of the Stokes operator respectively, with $e_{j} = (e_{j,1}, \ldots, e_{j,n})$. 
\end{theorem}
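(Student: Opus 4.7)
The plan is to follow the Lebeau--Robbiano spectral-extension scheme, as adapted to the Stokes operator in \cite{Lebeau-Felipe}, but modified so that the local observation involves only $n-1$ of the $n$ velocity components. The extra ingredient required is a Carleman / interpolation inequality for the \emph{stationary} Stokes system whose observation term involves only $n-1$ components.

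First, set $u_{\Lambda}(x) := \sum_{\mu_j \leq \Lambda} a_j e_j(x)$, fix $L > 0$, and extend this finite sum into the cylinder $\tilde Q := \Omega \times (-L, L)$ by
$$
U(x, s) := \sum_{\mu_j \leq \Lambda} a_j \frac{\sinh(\sqrt{\mu_j}\, s)}{\sqrt{\mu_j}}\, e_j(x), \qquad
P(x, s) := \sum_{\mu_j \leq \Lambda} a_j \frac{\sinh(\sqrt{\mu_j}\, s)}{\sqrt{\mu_j}}\, p_j(x),
$$
where $p_j$ is the pressure associated with the eigenfunction $e_j$. A direct computation using $-\Delta e_j + \nabla p_j = \mu_j e_j$ and $\partial_s^2 \phi_j = \mu_j \phi_j$ for $\phi_j(s) := \sinh(\sqrt{\mu_j}\, s)/\sqrt{\mu_j}$ shows that $(U, P)$ solves the elliptic Stokes-like system $-\Delta_{x, s} U + \nabla_x P = 0$ and $\nabla_x \cdot U = 0$ in $\tilde Q$, with $U = 0$ on $\partial\Omega \times (-L, L)$, $U(\cdot, 0) = 0$ and $\partial_s U(\cdot, 0) = u_\Lambda$. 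Expanding in the eigenbasis also gives the a priori bound $\|U\|_{H^1(\tilde Q)^n}^2 + \|P\|_{L^2(\tilde Q)}^2 \leq C\, e^{C_L \sqrt{\Lambda}} \sum_{\mu_j \leq \Lambda} |a_j|^2$. Thus the spectral inequality reduces to bounding $\|\partial_s U(\cdot, 0)\|_{L^2(\Omega)^n}^2$ by an observation of $(U_1, \ldots, U_{n-1})$ on $\omega \times I$ for some $I \Subset (-L, L)$ containing $0$.

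The core of the argument is a Carleman / interpolation inequality for any smooth $(U, P)$ solving the above elliptic Stokes-like system with $U|_{\partial\Omega \times (-L, L)} = 0$, of the form
$$
\|\partial_s U(\cdot, 0)\|_{L^2(\Omega)^n}^2 \leq C \bigl( \|U\|_{H^1(\tilde Q)^n}^2 + \|P\|_{L^2(\tilde Q)}^2 \bigr)^{1-\theta} \Biggl( \sum_{k=1}^{n-1} \|U_k\|_{L^2(\omega \times I)}^2 \Biggr)^{\theta}
$$
for some $\theta \in (0, 1)$. To derive this, apply $\Delta_{x, s}$ to each momentum equation and use $\Delta_x P = 0$ (which follows from $\nabla_x \cdot U = 0$) to conclude that each $U_k$ is biharmonic in $(x, s)$; then combine componentwise Fursikov--Imanuvilov-type Carleman estimates for the biharmonic operator, applied for $k = 1, \ldots, n-1$, with the divergence-free identity $\partial_{x_n} U_n = -\sum_{k=1}^{n-1} \partial_{x_k} U_k$, which allows one to recover information about $U_n$ from the first $n-1$ components up to lower-order terms. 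This is the stationary counterpart of the parabolic Carleman estimate of Coron--Guerrero \cite{Coron-Guerrero}.

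Combining the a priori bound with the interpolation inequality and using Young's inequality to absorb the $H^1$-norm on the right gives $\sum_{\mu_j \leq \Lambda} |a_j|^2 \leq M e^{K \sqrt{\Lambda}} \sum_{k=1}^{n-1} \|U_k\|_{L^2(\omega \times I)}^2$. The cylindrical observation is then converted into the boundary observation on $u_{\Lambda, k} = \partial_s U_k(\cdot, 0)$ by Taylor-expanding in $s$: since $\sinh(\sqrt{\mu_j}\, s)/\sqrt{\mu_j} = s + O(s^3 \mu_j)$, we have $U_k(x, s) = s\, u_{\Lambda, k}(x) + r_k(x, s)$ with $\|r_k(\cdot, s)\|_{L^2(\omega)}^2 \leq C s^6 \Lambda^2 \sum_j |a_j|^2$; choosing $I = (-\varepsilon, \varepsilon)$ with $\varepsilon \sim \Lambda^{-1/2}$ makes the remainder absorbable and delivers precisely \eqref{spectral-inequality-1}. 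The hardest part of the scheme is the Carleman estimate just outlined: one must construct weights that are simultaneously adapted to the biharmonic structure of each component and to the transfer of observability via the divergence-free constraint, without degenerating on the cylinder bases $\Omega \times \{\pm L\}$, where no boundary data on $U$ is prescribed.
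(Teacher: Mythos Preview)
Your overall architecture is right, but there are two concrete gaps and one smaller issue.

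\textbf{The biharmonic claim is false.} Applying $\Delta_{x,s}$ to the $k$-th momentum equation gives $\Delta_{x,s}^2 U_k = \partial_{x_k}\Delta_{x,s} P$; since $\Delta_x P = 0$ but $\partial_{ss}^2 P = \sum_j a_j \mu_j \phi_j(s) p_j(x) \not\equiv 0$, the right-hand side does not vanish. What the paper actually does is apply $\Delta_x$ alone, which kills the pressure gradient: the function $\widehat{v}_\Lambda := \Delta_x(U_1,\ldots,U_{n-1},0)$ satisfies the \emph{second-order} equation $(\partial_{ss}^2 + \Delta_x)\widehat{v}_\Lambda = 0$, and the interpolation inequality (Theorem~\ref{thrm.int.inq}) is proved for this harmonic system, not a fourth-order one.

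\textbf{The lateral boundary difficulty is not addressed.} This is the real obstacle, and it is entirely absent from your sketch. The function $\widehat{v}_\Lambda$ does \emph{not} vanish on $(0,1)\times\partial\Omega$: its trace there is $\sum_j a_j \phi_j(s)\,\Delta_x \widehat{e}_j|_{\partial\Omega}$, nonzero and with no a~priori control. Any Carleman estimate near $\partial\Omega$ will produce these boundary terms, and they cannot simply be dropped. The paper's mechanism is to introduce the semiclassical parameter $h = \delta\Lambda^{-1/2}$ and use a boundary Carleman estimate (Theorem~\ref{carleman.estimate}) whose right-hand side carries the traces; then one exploits that the boundary data is spectrally supported in $\{\mu_j \leq \Lambda\}$, so after tangential microlocalization on $\partial\Omega$ its high-frequency part is $O(\Lambda^{-N})$ (Proposition~\ref{Prop:w2-estimate}), while the low-frequency part is handled by a further pseudodifferential cutoff in the dual variable of $s$ (Proposition~\ref{Proposition:(1-Bs)w_1--estimate}). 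The outcome is not a clean interpolation inequality but one with an extra term $e^{-\mu/h}\|\widehat{v}_\Lambda\|_{H^1(\mathcal{Z})}$, absorbed only after choosing $\delta$ small. Your appeal to a ``stationary counterpart of Coron--Guerrero'' does not supply any of this: their parabolic weight degenerates at $t=0,T$, which has no analogue for the lateral boundary $\partial\Omega$ in the elliptic extension.

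\textbf{The Taylor-expansion conversion fails as written.} After combining you would have $\sum|a_j|^2 \leq M e^{K\sqrt\Lambda}\sum_k\|U_k\|_{L^2(\omega\times(-\varepsilon,\varepsilon))}^2$. Substituting $U_k = s\,u_{\Lambda,k} + r_k$ with $\sum_k\|r_k\|_{L^2(\omega\times(-\varepsilon,\varepsilon))}^2 \leq C\Lambda^2\varepsilon^7\sum|a_j|^2$ and $\varepsilon\sim\Lambda^{-1/2}$ leaves a remainder $e^{K\sqrt\Lambda}\Lambda^{-3/2}\sum|a_j|^2$ on the right, which diverges and cannot be absorbed; shrinking $\varepsilon$ further would make the interpolation constants $\Lambda$-dependent. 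The paper avoids this entirely: the observation term in the interpolation inequality is the \emph{trace} $\|\partial_s\widehat{v}_\Lambda(0,\cdot)\|_{L^2(\tilde\omega)}$, and the passage from $\sum_j a_j\Delta_x\widehat{e}_j$ back to $\sum_j a_j\widehat{e}_j$ is done by two integrations by parts against a cutoff $\theta\in C_c^\infty(\omega)$ with $\theta\equiv 1$ on $\tilde\omega$, costing only a polynomial factor $\Lambda^2$.
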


To prove the \textit{spectral inequality} \eqref{spectral-inequality-1}, we borrow some ideas from \cite{Lebeau-Robbiano}. To this end, we introduce the open sets:
	\begin{equation}\label{Def-sets-Z.and.W}
		\mathcal{Z} = (0,1)\times\Omega \quad \mbox{and} \quad \mathcal{W} = \left(\tfrac{1}{4},\tfrac{3}{4}\right)\times\Omega.
	\end{equation}

For $ \Lambda > 0 $, we consider the following linear combinations of the eigenfunctions (and associated eigenpressures) of the Stokes system:
	\begin{equation}\label{special-stokes-solutions}
		\Bigl(u_\Lambda(s,x),p_\Lambda(s,x)\Bigr): = \left(\sum_{\mu_{j}\leqslant\Lambda}a_{j}\frac{\sinh(s\sqrt{\mu_{j}})}{\sqrt{\mu_{j}}}e_{j}(x),\sum_{\mu_{j}\leqslant\Lambda}a_{j}\frac{\sinh(s\sqrt{\mu_{j}})}{\sqrt{\mu_{j}}}p_{j}(x)\right),
	\end{equation}
where$(s,x)\in \mathcal{Z}$, $ (a_{j})_{j}\in \ell^2 $, and $p_{j}$ denotes the pressure associated with the eigenfunctions $e_{j}$.

It is straightforward to verify that $ (u_{\Lambda},p_{\Lambda}) $ solves the augmented Stokes system
	\begin{equation}\label{elptc-system}
		\left\{
		\begin{array}{lll}
			-\partial_{ss}^{2}u_{\Lambda} - \Delta_{x}u_{\Lambda} + \nabla_{x}p_{\Lambda} = 0 & in& \mathcal{Z},\\
			\nabla_x \cdot u_{\Lambda} = 0 & in& \mathcal{Z},\\
		\end{array}
		\right.
	\end{equation}
with boundary conditions 
	\begin{equation}\label{elptc-system-boundary-conditions}
		\left\{\begin{array}{lll}
			u_{\Lambda} = 0 & on & (0,1
			)\times\partial\Omega,\\
			u_{\Lambda}(0,\cdot) = 0 & in & \Omega,\\
			\partial_{s}u_{\Lambda}(0,\cdot) = \displaystyle\sum_{\mu_{j}\leqslant\Lambda}a_{j}e_{j} & in& \Omega.
		\end{array}
		\right.
	\end{equation}

\begin{obs}
As noted in \cite{Lebeau-Felipe}, system \eqref{elptc-system} lacks a local unique continuation property even when observing all the components. Indeed, consider a function $u:\mathcal{Z}\to \mathbb{R}^n$, with $u(s,x) := b(s)\nabla_xq(x)$ such that $b \in \mathcal{C}_{0}^{\infty}(0,1) $, $b\not\equiv0$ and $\Delta_{x}q = 0$ in $\Omega$ with $\nabla_x q\not\equiv0$. Then, if $p:\mathcal{Z}\to \mathbb{R}$ is given by $p(s,x):=\partial_{ss}^{2}b(s)q(x)$, then $(u,p)$ solves locally \eqref{elptc-system}.

Since $q$ is a non-constant function, there exists $\mathcal{O} \subset\subset \Omega$ such that $\nabla_{x}q(x) \neq 0 $ for all $x \in \mathcal{O}$. On the other hand, let $s_{00} \in Supp(b)^{\complement}$ and consider $\mathrm{I}_{{s}_{00}}$, a neighborhood of $s_{00}$, such that $\mathrm{I}_{s_{00}} \subset Supp(b)^{\complement}$ and $\overline{\mathrm{I}_{s_{00}}} \cap Supp(b) \neq \emptyset$. Then, any $\mathrm{I}_{s_{00}}^{*}$ neighborhood of $s_{00}$ with $\mathrm{I}_{s_{00}} \subset\subset \mathrm{I}_{s_{00}}^{*}$ satisfies 
    $$
        \mathrm{I}_{s_{00}}^{*} \cap \ring{Supp(b)} \neq \emptyset.
    $$ 
Then, for any open sets $\mathcal{O}^{*}$ and $ \mathrm{I}_{s_{00}}^{*}$ such that $\mathcal{O}\subset \mathcal{O}^{*}$ and  $\mathrm{I}_{s_{00}} \subset\subset \mathrm{I}_{s_{00}}^{*}$,  it is easy to see that
    $$
        u\equiv 0 \quad \hbox{in}\quad \mathrm{I}_{s_{00}}\times\mathcal{O}
    $$
and  that
    $$
        u\not\equiv 0 \quad  \hbox{in} 	\quad \mathrm{I}_{s_{00}}^{*}\times\mathcal{O}^{*}.
    $$
Therefore, system \eqref{elptc-system} fails to satisfy a local unique continuation property, and consequently no local Carleman estimate can be derived for \eqref{elptc-system}.
\end{obs}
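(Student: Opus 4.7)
The plan is to verify, directly and explicitly, the three assertions packed into the remark: that the pair $(u,p)$ with $u(s,x) = b(s)\nabla_{x}q(x)$ and $p(s,x) = \partial_{ss}^{2}b(s)\,q(x)$ solves the augmented Stokes system \eqref{elptc-system}, that $u$ vanishes on a nonempty open cylinder, and that $u$ does not vanish on any strictly larger open cylinder. First I would compute: since $q$ is harmonic in $\Omega$, one has $\Delta_{x}(\nabla_{x}q) = \nabla_{x}(\Delta_{x}q) = 0$, so $-\Delta_{x}u = 0$ and $\nabla_{x}\cdot u = b(s)\Delta_{x}q = 0$. The choice of $p$ is precisely tailored so that $\nabla_{x}p = \partial_{ss}^{2}b(s)\nabla_{x}q = \partial_{ss}^{2}u$, and hence $-\partial_{ss}^{2}u - \Delta_{x}u + \nabla_{x}p = 0$. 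Therefore $(u,p)$ is a smooth solution of \eqref{elptc-system} in $\mathcal{Z}$.

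Next I would fix concrete data witnessing the counterexample. Any non-constant harmonic function on $\Omega$ works, for instance $q(x) = x_{1}$, whose gradient equals $(1,0,\ldots,0)$ throughout $\Omega$; in particular one may take $\mathcal{O} \subset\subset \Omega$ to be any open ball, and then $\nabla_{x}q \neq 0$ on $\mathcal{O}$. Choose $b \in \mathcal{C}_{0}^{\infty}(0,1)$ with $b \not\equiv 0$, and let $s_{00}$ be a point of $(0,1) \setminus \mathrm{Supp}(b)$ lying on the topological boundary of $\mathrm{Supp}(b)$; such a point exists because $\mathrm{Supp}(b)$ is a proper compact subset of $(0,1)$. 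Take $\mathrm{I}_{s_{00}} \subset \mathrm{Supp}(b)^{\complement}$ to be a small open neighborhood of $s_{00}$ with $\overline{\mathrm{I}_{s_{00}}} \cap \mathrm{Supp}(b) \neq \emptyset$. Since $b \equiv 0$ on $\mathrm{I}_{s_{00}}$, it follows that $u \equiv 0$ on $\mathrm{I}_{s_{00}} \times \mathcal{O}$. On the other hand, any neighborhood $\mathrm{I}_{s_{00}}^{*}$ with $\mathrm{I}_{s_{00}} \subset\subset \mathrm{I}_{s_{00}}^{*}$ must cross the boundary of $\mathrm{Supp}(b)$ and therefore meet its interior, on which $b$ is not identically zero; combined with $\nabla_{x}q \neq 0$ on $\mathcal{O} \subset \mathcal{O}^{*}$, this forces $u \not\equiv 0$ on $\mathrm{I}_{s_{00}}^{*} \times \mathcal{O}^{*}$.

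Finally, I would conclude by explaining the consequence for Carleman estimates. The pair $(u,p)$ constructed above is a smooth solution of \eqref{elptc-system} that vanishes on $\mathrm{I}_{s_{00}} \times \mathcal{O}$ but fails to vanish on any enlargement of this open cylinder, which is exactly the negation of the local unique continuation property. A standard weighted $L^{2}$ Carleman inequality for \eqref{elptc-system}, applied to $\chi u$ for a cutoff $\chi$ equal to $1$ near the vanishing region and supported in the enlargement, would force $u$ to vanish on a neighborhood of $\mathrm{I}_{s_{00}} \times \mathcal{O}$ upon sending the large Carleman parameter to infinity; this contradiction rules out any such estimate. The only delicate point is the choice of $s_{00}$ as a genuine boundary point of $\mathrm{Supp}(b)$, so that every neighborhood $\mathrm{I}_{s_{00}}^{*}$ strictly larger than $\mathrm{I}_{s_{00}}$ is guaranteed to meet the interior of $\mathrm{Supp}(b)$; the remaining verifications are routine.
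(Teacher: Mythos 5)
Your proposal follows the paper's argument step by step: you verify that $(u,p)$ with $u = b\,\nabla_x q$ and $p = \partial_{ss}^2 b\,q$ solves the augmented Stokes system by direct computation, observe that harmonicity of $q$ gives both $\Delta_x u = 0$ and $\nabla_x\cdot u = 0$ while the choice of $p$ cancels $\partial_{ss}^2 u$, fix an open set $\mathcal{O}$ on which $\nabla_x q$ is nowhere zero, and then locate a slab $\mathrm{I}_{s_{00}}\times\mathcal{O}$ where $u$ vanishes but whose every enlargement meets the set $\{b\neq 0\}$. This is exactly the paper's construction, with the pleasant addition of the concrete example $q(x)=x_1$.

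There is, however, one genuine error in your choice of $s_{00}$. You ask for a point $s_{00}\in(0,1)\setminus\mathrm{Supp}(b)$ lying on the topological boundary of $\mathrm{Supp}(b)$. No such point exists: $\mathrm{Supp}(b)$ is closed, so $\partial\,\mathrm{Supp}(b)\subset\mathrm{Supp}(b)$, and any boundary point of $\mathrm{Supp}(b)$ therefore lies in $\mathrm{Supp}(b)$, not in its complement. The claim ``such a point exists because $\mathrm{Supp}(b)$ is a proper compact subset of $(0,1)$'' justifies only that $(0,1)\setminus\mathrm{Supp}(b)\neq\emptyset$, not that it meets $\partial\,\mathrm{Supp}(b)$. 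The construction works fine without this assertion, and the paper does not make it: take any $s_{00}\in(0,1)\setminus\mathrm{Supp}(b)$ with $r:=\mathrm{dist}(s_{00},\mathrm{Supp}(b))<\mathrm{dist}(s_{00},\{0,1\})$ (always possible since $\mathrm{Supp}(b)$ is a nonempty compact subset of $(0,1)$), and set $\mathrm{I}_{s_{00}}:=(s_{00}-r,s_{00}+r)$; then $\mathrm{I}_{s_{00}}\subset\mathrm{Supp}(b)^\complement$ and $\overline{\mathrm{I}_{s_{00}}}\cap\mathrm{Supp}(b)\neq\emptyset$, which is the only property the rest of your argument uses. From there your reasoning is correct: if $\mathrm{I}_{s_{00}}\subset\subset\mathrm{I}_{s_{00}}^*$ then $\mathrm{I}_{s_{00}}^*\supset\overline{\mathrm{I}_{s_{00}}}$ meets $\mathrm{Supp}(b)=\overline{\{b\neq0\}}$, hence, being open, meets $\{b\neq0\}$; combined with $\nabla_x q\neq 0$ on $\mathcal{O}\subset\mathcal{O}^*$ this forces $u\not\equiv 0$ on $\mathrm{I}_{s_{00}}^*\times\mathcal{O}^*$, while $u\equiv 0$ on $\mathrm{I}_{s_{00}}\times\mathcal{O}$ since $b\equiv 0$ there.
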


The main difficulty in working with system \eqref{elptc-system} lies in the presence of the pressure term.  A common strategy to obtain  controllability results for the Stokes system is to apply a differential operator that eliminates the pressure. For instance,   in \cite[Proposition 1]{Coron-Guerrero}, to prove a suitable observability inequality,  the authors apply the operator  $\nabla_{x}\Delta_{x}$ to $n-1$ components of the adjoint system and deduce a Carleman estimate with observation in $n-1$ components. 
\
In our case, we need a differential operator which, when applied to  $n-1$ components of $u_{\Lambda}$, allow us to recover the $L^{2}$-norm of all the components of $u_{\Lambda}$. Hence, instead of using the operator  $\nabla_{x}\Delta_{x}$, it suffices  to apply the operator $\Delta_{x}$, which proves effective because the mapping $u \mapsto \|\Delta_x u\|^{\frac{1}{2}}$ defines a Hilbertian norm on $D(-\Delta_{x})$ that is equivalent to the Sobolev $H^{2}(\Omega)$-norm.  In fact, applying $\nabla_{x}\Delta_{x}$ would not yield the norm equivalence required in our framework.

To simplify the notation, from now on, the hat notation indicates functions whose $n$-th component vanishes identically. In particular, we define $\widehat{v}_{\Lambda}$ by
	\begin{equation}\label{def:widehat(v)_Lambda}
		\widehat{v}_{\Lambda}(s,x)\ :=\ \sum_{\mu_{j}\leqslant\Lambda}a_{j}\frac{\sinh(s\sqrt{\mu_{j}})}{\sqrt{\mu_{j}}}\Delta_{x}(e_{j,1}(x),\ldots, e_{j,n-1}(x),0),
	\end{equation}
where the functions $e_{j,k}$ denote the components of the vector $e_{j} = (e_{j,1},\ldots, e_{j,n-1},e_{j,n})$.

The function $\widehat{v}_{\Lambda}$ satisfies the following system
	\begin{equation}\label{elptc-system-2}
		\left\{\begin{array}{lll}
			-\partial_{ss}^{2}\widehat{v}_{\Lambda} - \Delta_{x}\widehat{v}_{\Lambda} = 0 & \mbox{in} & \mathcal{Z},\\
			\widehat{v}_{\Lambda} = \displaystyle\sum_{\mu_{j}\leqslant\Lambda}a_{j}\frac{\sinh(\cdot\sqrt{\mu_{j}})}{\sqrt{\mu_{j}}}\Delta_{x}(e_{j,1}(x),\ldots, e_{j,n-1}(x),0)\big|_{\partial \Omega} & \mbox{on} & (0,1)\times\partial\Omega,\\
			\widehat{v}_{\Lambda}(0,\cdot) = 0 & \mbox{in} & \Omega,\\
			\partial_{s}\widehat{v}_{\Lambda}(0,\cdot) = \displaystyle \sum_{\mu_{j}\leqslant\Lambda}a_{j}\Delta_{x}(e_{j,1}(x),\ldots, e_{j,n-1}(x),0) & \mbox{in} & \Omega.
		\end{array}
		\right.
	\end{equation}

Since we do not have precise information about the boundary values of $\widehat{v}_{\Lambda}$ on $ (0,1)\times\partial\Omega $, following \cite{Lebeau-Felipe}, we introduce a small semiclassical parameter $h$, which is related to the frequency $\Lambda$:
	\begin{equation*}
		h := \frac{\delta}{\sqrt{\Lambda}}, \quad \mbox{with}\ \delta > 0\ \mbox{small}.
	\end{equation*}
This parameter concentrates the frequency support of $\widehat{v}_{\Lambda}$ near zero and allows the use of semiclassical analysis and pseudodifferential operator techniques, which are essential for the subsequent arguments.

The spectral inequality \eqref{spectral-inequality-1} will be  a consequence of the following interpolation inequality for $\widehat{v}_{\Lambda}$. 
\begin{theorem}\label{thrm.int.inq}
There exist positive constants  $\delta_0$, $\Lambda_{0}$, $ \mu$, $ C $ and $ \rho \in (0,1)$ such that: for any $ \delta\in(0,\delta_0]$  there exists  $C_{\delta}>0$ such that for all 
	$\Lambda \geqslant\Lambda_{0} $ and all sequence of complex numbers $ (a_{j})_{j\geq 1}\in \ell^2 $, the function $ \widehat{v}_{\Lambda} $, defined in \eqref{def:widehat(v)_Lambda}, satisfies 
	\begin{equation}\label{interpolation-inequality}
		\|\widehat{v}_{\Lambda}\|_{H^{1}(\mathcal{W})^{n-1}}\ \leq\ C_{\delta}\Bigl( e^{-\frac{\mu}{h}}\|\widehat{v}_{\Lambda}\|_{H^{1}(\mathcal{Z})^{n-1}} + e^{\frac{C}{h}}\|\widehat{v}_{\Lambda} \|_{H^{1}(\mathcal{Z})^{n-1}}^{1-\rho}\| \partial_{s}\widehat{v}_{\Lambda}(0,\cdot)\|_{L^{2}(\omega)^{n-1}}^{\rho}\Bigr),
	\end{equation}
where $h = \delta\Lambda^{-\frac{1}{2}} $.
\end{theorem}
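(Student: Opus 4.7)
The plan is to adapt the Lebeau--Robbiano methodology to the augmented elliptic system \eqref{elptc-system-2}. The semiclassical parameter $h = \delta\Lambda^{-1/2}$ is tailored so that the $x$-frequency content of $\widehat{v}_{\Lambda}$, bounded by $\sqrt{\Lambda}$, corresponds to the semiclassical scale $h^{-1}$; each scalar component of $\widehat{v}_{\Lambda}$ is annihilated by the elliptic operator $P = -\partial_{s}^{2} - \Delta_{x}$, whose rescaling $h^{2}P$ has principal symbol $\sigma^{2} + |\xi|^{2}$ of order one in the semiclassical calculus. Choosing $\delta \leq \delta_{0}$ sufficiently small ensures that the Carleman estimates below hold with constants independent of $\Lambda$.

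The first key ingredient is a local semiclassical boundary Carleman estimate near the observation set. I would fix $x_{0} \in \omega$ and a small half-ball $B^{+} \subset \overline{\mathcal{Z}}$ centered at $(0,x_{0})$, and following H\"ormander's construction for $P$, build a weight $\varphi(s,x)$ strictly pseudoconvex with respect to $P$ whose restriction to $\{s=0\}$ has gradient pointing into $\omega$. For $w \in C^{\infty}(\overline{B^{+}})$ compactly supported in $B^{+}$ with $w(0,\cdot) = 0$, the resulting estimate reads
\begin{equation*}
	h\|e^{\varphi/h}\nabla_{s,x}w\|_{L^{2}(B^{+})}^{2} \leq C\Bigl(h^{4}\|e^{\varphi/h}Pw\|_{L^{2}(B^{+})}^{2} + h^{3}\|e^{\varphi(0,\cdot)/h}\partial_{s}w(0,\cdot)\|_{L^{2}(\omega)}^{2}\Bigr),
\end{equation*}
the boundary term at $\{s=0\}$ surviving integration by parts precisely because $w$ vanishes there.

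Next, I would apply this estimate componentwise to $w = \chi\widehat{v}_{\Lambda}$, where $\chi \in C_{c}^{\infty}(B^{+})$ equals one on a smaller half-ball $B_{0}^{+}$. Since $P\widehat{v}_{\Lambda} = 0$, the right-hand side reduces to the commutator $[P,\chi]\widehat{v}_{\Lambda}$, supported on $\supp(\nabla\chi)$, plus the observation term. Arranging $\chi$ so that $\varphi$ is strictly smaller on $\supp(\nabla\chi)$ than on $B_{0}^{+}$ yields, after rearrangement,
\begin{equation*}
	\|\widehat{v}_{\Lambda}\|_{H^{1}(B_{0}^{+})^{n-1}} \leq C\bigl(e^{-\mu_{0}/h}\|\widehat{v}_{\Lambda}\|_{H^{1}(\mathcal{Z})^{n-1}} + e^{C_{0}/h}\|\partial_{s}\widehat{v}_{\Lambda}(0,\cdot)\|_{L^{2}(\omega)^{n-1}}\bigr).
\end{equation*}
Distinguishing whether the observation dominates the global norm and using the trivial bound $\|\widehat{v}_{\Lambda}\|_{H^{1}(B_{0}^{+})^{n-1}} \leq \|\widehat{v}_{\Lambda}\|_{H^{1}(\mathcal{Z})^{n-1}}$ in the opposite regime produces, via an elementary Young-type argument, the desired interpolation structure locally near $(0,x_{0})$ with some explicit $\rho \in (0,1)$.

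Finally, I would propagate this local inequality to $\mathcal{W}$ by a finite chain of three-ball inequalities, obtained from interior Carleman estimates for $P$ along a compact curve connecting $(0,x_{0})$ to an arbitrary point of $\overline{\mathcal{W}}$. Each link degrades $\rho$ by a controlled factor, and finitely many iterations suffice by compactness. The main technical obstacle arises near the lateral boundary $(0,1)\times\partial\Omega$, where $\widehat{v}_{\Lambda}$ has a non-trivial trace determined by $\Delta_{x}(e_{j,1},\ldots,e_{j,n-1},0)\big|_{\partial\Omega}$: one must lift this inhomogeneous Dirichlet data, control the lift in $H^{1}(\mathcal{Z})^{n-1}$ using elliptic regularity for the Stokes operator so that it is dominated by $\|\widehat{v}_{\Lambda}\|_{H^{1}(\mathcal{Z})^{n-1}}$, and derive boundary Carleman estimates for $P$ with pseudoconvex weights adapted to $\partial\Omega$ so that the extra boundary contributions are absorbed into the $e^{-\mu/h}$ error term.
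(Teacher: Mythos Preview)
Your interior propagation scheme (Carleman near $\{s=0\}\times\omega$, then three-ball chains) is essentially the paper's Proposition~\ref{Prop-interpolation-Leb-Fel} and is fine. The genuine gap is your treatment of the lateral boundary $(0,1)\times\partial\Omega$, which is the heart of the theorem.

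Your plan is to lift the inhomogeneous Dirichlet data of $\widehat{v}_{\Lambda}$ and control the lift by elliptic regularity so that it is ``dominated by $\|\widehat{v}_{\Lambda}\|_{H^{1}(\mathcal{Z})^{n-1}}$''. This cannot produce the required $e^{-\mu/h}$ gain. If $L$ is such a lift, then $P(\widehat{v}_{\Lambda}-L)=-PL$, and even if $\|PL\|_{L^{2}}\lesssim\|\widehat{v}_{\Lambda}\|_{H^{1}(\mathcal{Z})}$, the Carleman right-hand side is $h^{4}\|e^{\varphi/h}PL\|^{2}$, where $e^{\varphi/h}$ is \emph{largest} precisely on the region you want to estimate. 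The output is then merely $\|\widehat{v}_{\Lambda}\|_{H^{1}(\text{near }\partial\Omega)}\lesssim e^{C/h}\|\widehat{v}_{\Lambda}\|_{H^{1}(\mathcal{Z})}$, which is trivial. No generic lift can be made $e^{-\mu/h}$-small where $\varphi$ is large.

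The paper avoids lifting altogether. It applies the boundary Carleman estimate directly to $\chi\widehat{v}_{\Lambda}$ with a weight satisfying $\partial_{r}\varphi>0$, so only the Dirichlet trace $\widehat{w}_{0}$ (and its tangential gradient) appears on $\partial\Omega$, not the normal derivative. The whole work then goes into showing $\|\widehat{w}_{0}\|_{H^{1}_{sc}(r=0)}$ is genuinely small, and this requires exploiting the explicit spectral structure of $\widehat{v}_{\Lambda}$: a microlocal split in the tangential frequency (operator $\mathcal{B}_{y}$) kills the high-$\eta$ part at rate $\Lambda^{-N}$ because all $\mu_{j}\leq\Lambda$; a further split in the $s$-frequency (operator $\mathfrak{B}_{s}$) handles the low-$\eta$ part via a stationary-phase estimate on the explicit $\sinh(s\sqrt{\mu_{j}})e^{\varphi_{0}(s)/h}$ kernel, yielding $e^{-c_{0}/h}$. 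Throughout, the divergence-free condition (Lemma~\ref{Lemma:sum.to.n-1.components}) is used to bound $\sum_{\mu_{j}\leq\Lambda}|A_{j}(s)|^{2}$ by $\|\widehat{v}_{\Lambda}(s,\cdot)\|_{L^{2}(\Omega)^{n-1}}^{2}$, which is what keeps only $n-1$ components on the right-hand side. None of these mechanisms appears in your proposal, and without them the boundary step does not close.
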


The outline of this paper is as follows. 
In Section \ref{sec:preliminaries}, we introduce some definitions and preliminary results used throughout this work. 
In Section \ref{sec:proof-interp-ineq}, we prove the interpolation inequality given in Theorem \ref{thrm.int.inq}, which is the key step for obtaining the spectral inequality in Theorem \ref{spct-ineq-theorem}. The proof of this spectral inequality is the main focus of Section \ref{sec:spct-ineq}. 
Section \ref{sec:opt-null-cost} is dedicated to the proof of the observability inequality given in Theorem \ref{Theorem:Observability-Inequality}. Finally, in Section \ref{Section:Applications}, we give some applications of our results.

\section{Preliminaries}\label{sec:preliminaries}

\subsection{Stokes Operator: Definition and Properties}

In this section, we provide an introduction on the Stokes operator and its main properties.    
 
We begin by introducing  the orthogonal projector $\mathbb{P}: L^2(\Omega;\mathbb{R}^n) \mapsto H$, usually known as the \textit{Leray's Projector}.
Recall that $\mathbb{P}$ maps $H^{s}(\Omega;\mathbb{R}^{n})$ into $H^{s}(\Omega;\mathbb{R}^n)\cap H$ for all $s \geq 0$.

We denote by $\mathcal{A}$ the \textit{Stokes operator}, i.e. the self-adjoint operator on $H$ formally defined by $\mathcal{A} = -\mathbb{P}\Delta$, with domain 
	\begin{equation*}
		D(\mathcal{A}) := V\cap H^{2}(\Omega;\mathbb{R}^{n}).
	\end{equation*}
For $u\in D(\mathcal{A})$ and $f\in H$, the identity $\mathcal{A}u = f$ holds if and only if
	\begin{equation*}
		(\nabla u,\nabla v)=(f,v) \quad \forall\ v \in V.
	\end{equation*}

It is well known that $\mathcal{A} : D(\mathcal{A}) \mapsto H$ is a homeomorphism, whose inverse $\mathcal{A}^{-1}:H \mapsto H$ is self-adjoint, compact, and positive (see \cite{Book-Boyer}). Consequently, there exists a nondecreasing sequence of positive numbers $(\mu_{j})_{j=1}^{\infty}$ with $\lim_{j \to \infty}\mu_{j} = \infty $, and an associated orthonormal basis of $H$, denoted by $(e_{j})_{j=1}^{\infty}$, such that
	\begin{equation*}
		\mathcal{A} e_{j} = \mu_{j}e_{j} \quad \forall\ j \geq 1.
	\end{equation*}
In other words, the family $\{e_{j}\}_{j \geq 1}$, together with a corresponding family of pressure functions $\{p_{j}\}_{j \geq 1}$, satisfies the Stokes eigenvalue problem:
	\begin{equation}\label{eig.pro}
		\left\{\begin{array}{lll}
			-\Delta e_{j} + \nabla p_{j} =\mu_{j} e_{j} & \text{in}&\Omega,\\ 
			\nabla\cdot e_{j} = 0 & \text{in}& \Omega,\\
			e_{j} = 0 & \text{on}& \partial\Omega.
		\end{array}
		\right.
	\end{equation}

Accordingly, we can introduce the real powers of the Stokes operator. For any $r \in \mathbb{R}$, we set
	\begin{equation*}
		D(\mathcal{A}^{r})= \left\{\, u \in H :  u = \sum_{j=1}^{\infty} u_{j}e_{j}\ \hbox{ with}\ \sum_{j=1}^{\infty}\mu_{j}^{2r} |u_{j}|^{2} < \infty \,\right\}
	\end{equation*}
and
	\begin{equation*}
		\mathcal{A}^{r}u := \sum_{j=1}^{\infty}\mu_{j}^{r} u_{j}e_{j}, \quad \forall\ u = \sum_{j=1}^{\infty} u_{j}e_{j} \in D(\mathcal{A}^{r}).
	\end{equation*}

\begin{obs}
Note that the family $ \{e_{j}\}_{j\geq 1} $ forms an orthogonal basis of $ V $ and satisfies:
	\begin{equation*}
		\|\nabla e_{i}\|^{2} = \mu_{i}.
	\end{equation*} 
Moreover, 
	\begin{equation*}
		\left(\mathcal{A}^{\frac{r}{2}} e_{i},\mathcal{A}^{\frac{r}{2}} e_{j}\right) = \mu_{i}^r\delta_{ij}.
	\end{equation*} 					
\end{obs}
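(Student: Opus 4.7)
The plan is to read off both identities directly from the variational characterisation of $\mathcal{A}$ and from the spectral definition of its fractional powers already introduced in the preliminaries.

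First, recall that by the variational characterisation, for $u\in D(\mathcal{A})$ and $f\in H$ the identity $\mathcal{A}u=f$ is equivalent to $(\nabla u,\nabla v)=(f,v)$ for all $v\in V$. I apply this with $u=e_{j}$, $f=\mu_{j}e_{j}$, and the test function $v=e_{i}\in V$. Since $\{e_{j}\}_{j\geq 1}$ is orthonormal in $H$, this yields
\begin{equation*}
(\nabla e_{j},\nabla e_{i})=\mu_{j}(e_{j},e_{i})=\mu_{j}\delta_{ij}.
\end{equation*}
Taking $i=j$ gives $\|\nabla e_{i}\|^{2}=\mu_{i}$, and taking $i\neq j$ shows that the family is orthogonal with respect to the inner product $(\nabla\cdot,\nabla\cdot)$, which (together with Poincar\'e's inequality, since $V\subset H^{1}_{0}(\Omega)^{n}$) is an inner product on $V$ equivalent to the standard one.

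Second, I turn to the basis property. Any $u\in V\subset H$ admits an $H$-orthonormal expansion $u=\sum_{j}u_{j}e_{j}$ with $u_{j}=(u,e_{j})$. To show convergence in $V$, I invoke the characterisation $V=D(\mathcal{A}^{1/2})$ (which follows from the identity $(\mathcal{A}^{1/2}u,\mathcal{A}^{1/2}v)=(\nabla u,\nabla v)$ for $u,v\in V$, obtained by the same variational argument). Since $u\in D(\mathcal{A}^{1/2})$, we have $\sum_{j}\mu_{j}|u_{j}|^{2}<\infty$, and by the orthogonality $(\nabla e_{i},\nabla e_{j})=\mu_{i}\delta_{ij}$, Bessel/Parseval gives convergence of the partial sums in the gradient norm, hence in $V$.

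Finally, for the fractional powers, I use the spectral definition given just above the remark: $\mathcal{A}^{r}u=\sum_{j}\mu_{j}^{r}u_{j}e_{j}$. In particular, $\mathcal{A}^{r/2}e_{i}=\mu_{i}^{r/2}e_{i}$. Taking the $H$-inner product and using orthonormality of $\{e_{j}\}_{j\geq 1}$ in $H$:
\begin{equation*}
\bigl(\mathcal{A}^{r/2}e_{i},\mathcal{A}^{r/2}e_{j}\bigr)
=\mu_{i}^{r/2}\mu_{j}^{r/2}(e_{i},e_{j})
=\mu_{i}^{r/2}\mu_{j}^{r/2}\delta_{ij}
=\mu_{i}^{r}\delta_{ij},
\end{equation*}
as claimed. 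No step here is a genuine obstacle; the only mild technicality is justifying $V=D(\mathcal{A}^{1/2})$, which is the standard consequence of the variational identity and the spectral definition of $\mathcal{A}^{1/2}$ and can simply be cited from, e.g., Boyer--Fabrie already referenced in the excerpt.
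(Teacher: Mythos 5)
Your proof is correct and complete; the paper states this as an unproved remark, and your derivation via the variational identity for $\mathcal{A}$ (giving $(\nabla e_i,\nabla e_j)=\mu_j\delta_{ij}$), the identification $V=D(\mathcal{A}^{1/2})$ for completeness in $V$, and the spectral formula $\mathcal{A}^{r/2}e_i=\mu_i^{r/2}e_i$ for the last identity is exactly the standard argument one would supply.
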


We conclude this section by presenting a result concerning the domains of the powers of the Stokes operator.

\begin{theorem}\label{embb}
Let $r\in \mathbb{R}$ be given and let $ \Omega $ be a bounded domain in $\mathbb{R}^{N}$ of class $ \mathcal{C}^{1,1}$. Then,
	\begin{itemize}
		\item if $-\frac{1}{2} < r\leq 2$, we have:
		\begin{equation}\nonumber
			\begin{array}{lll}
				D(\mathcal{A}^{\frac{r}{2}}) = H^{r}(\Omega)\cap H,     & \mbox{ whenever } & -\frac{1}{2} < r < \frac{1}{2}, \\
				\\
				D(\mathcal{A}^{\frac{r}{2}}) = H_{0}^{r}(\Omega)\cap H, & \mbox{ whenever } & \frac{1}{2} \leq r < 1,\\
				\\
				D(\mathcal{A}^{\frac{r}{2}}) = H^{r}(\Omega)\cap V,     & \mbox{ whenever } &  1 \leq r \leq 2.
			\end{array}
		\end{equation}
		Moreover, $ u \mapsto \left( u, \mathcal{A}^{r}u\right)^{\frac{1}{2}}$ is a Hilbertian norm in $D(\mathcal{A}^{\frac{r}{2}})$, equivalent to the usual Sobolev $H^{r}(\Omega)$-norm. In other words, there exist constants $C_{\mathcal{A},1}(r),C_{\mathcal{A},2}(r) > 0$ such that
			\begin{equation*}
				 C_{\mathcal{A},1}(r) \|u\|_{H^{r}} \leq \left( u,\mathcal{A}^{r}u\right)^{1/2} \leq C_{\mathcal{A},2}(r) \|u\|_{H^{r}}, \quad \quad \forall\ u \in D(\mathcal{A}^{\frac{r}{2}}).
			\end{equation*}
				
		\item If $\Omega$ is of class $\mathcal{C}^{k,1}$ with $k \geq 2$ and $2 < r \leq k + 1$, then
			$$
				D(\mathcal{A}^{\frac{r}{2}}) \hookrightarrow H^{r}(\Omega)\cap V
			$$
		and 
			\begin{equation*}
				\displaystyle C_{\mathcal{A},1}(r) \|u\|_{H^{r}} \leq \left( u,\mathcal{A}^{r}u\right)^{\frac{1}{2}}, \quad \forall\ u \in D(\mathcal{A}^{\frac{r}{2}}),
			\end{equation*}	
		for some positive constant $C_{\mathcal{A},1}(r)>0$.
	\end{itemize}
\end{theorem}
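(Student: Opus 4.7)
The plan is to establish Theorem~\ref{embb} by combining the spectral functional calculus for $\mathcal{A}$ with elliptic regularity for the stationary Stokes problem and complex interpolation of Hilbert spaces. I would anchor the argument at $r=0,1,2$: the case $r=0$ is definitional, since $D(\mathcal{A}^{0})=H$ and $(u,u)^{1/2}=\|u\|_{L^{2}}$; the case $r=1$ follows from testing \eqref{eig.pro} against $e_{j}$ and integrating by parts, which yields $\|\nabla e_{j}\|^{2}=\mu_{j}$, so Parseval gives $(u,\mathcal{A}u)=\sum_{j}\mu_{j}|u_{j}|^{2}=\|\nabla u\|^{2}$, and combined with Poincar\'e's inequality this identifies $D(\mathcal{A}^{1/2})=V=H^{1}_{0}(\Omega)^{N}\cap H$ with norm equivalence; the case $r=2$ reduces to Cattabriga's regularity for the stationary Stokes problem on a $\mathcal{C}^{1,1}$ domain, namely that $\mathcal{A}u=f\in H$ forces the existence of a pressure $q$ with $-\Delta u+\nabla q=f$, $\nabla\cdot u=0$, $u|_{\partial\Omega}=0$, and $\|u\|_{H^{2}}+\inf_{c\in\mathbb{R}}\|q-c\|_{H^{1}}\leq C\|f\|_{L^{2}}$.

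The fractional cases $r\in(-1/2,0)\cup(0,1)\cup(1,2)$ then follow by Hilbertian complex interpolation between the three anchor values. The spectral calculus delivers, for any $s_{0}<s_{1}$ and $\theta\in(0,1)$, the identity $D(\mathcal{A}^{(1-\theta)s_{0}+\theta s_{1}})=[D(\mathcal{A}^{s_{0}}),D(\mathcal{A}^{s_{1}})]_{\theta}$ with equivalent norms. The Lions-Magenes interpolation theory for Sobolev spaces then produces the three regimes in the statement: for $r\in[1,2]$, interpolating between $V$ and $V\cap H^{2}$ gives $H^{r}(\Omega)^{N}\cap V$; for $r\in[1/2,1)$, interpolating between $H$ and $V$ gives $H^{r}_{0}(\Omega)^{N}\cap H$, since the trace operator is well defined above the threshold $s=1/2$ and the zero boundary condition survives interpolation; for $r\in(-1/2,1/2)$, the trace is no longer defined and interpolation yields $H^{r}(\Omega)^{N}\cap H$, using duality in $H$ to handle the negative range together with the continuity of $\mathbb{P}:H^{s}(\Omega)^{N}\to H^{s}(\Omega)^{N}\cap H$ recalled in the preamble to glue the abstract interpolation space with its concrete Sobolev realization.

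For $r>2$ on a $\mathcal{C}^{k,1}$ domain with $r\leq k+1$, I would bootstrap: if $u\in D(\mathcal{A}^{r/2})$, then $f:=\mathcal{A}u\in D(\mathcal{A}^{(r-2)/2})$ lies in the appropriate $H^{r-2}$-type subspace by the previous steps, and iterating higher-order Stokes regularity on a smoother domain produces $u\in H^{r}(\Omega)^{N}\cap V$ together with $\|u\|_{H^{r}}\leq C(u,\mathcal{A}^{r}u)^{1/2}$. The reverse inequality fails in general above $r=2$ because membership in $D(\mathcal{A}^{r/2})$ forces hidden compatibility conditions for $\mathcal{A}u$ at $\partial\Omega$ (for instance, at the level $r=3$ one needs $\Delta u-\nabla q\in V$, which is not implied by mere $H^{3}$ regularity), which is precisely why the second bullet retains only a continuous embedding and a one-sided bound. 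The main obstacle I expect is the sharp identification at the interpolation threshold $r=1/2$: one must invoke the fine fact that $[L^{2},H^{1}_{0}]_{1/2}=H^{1/2}_{00}$ and then verify that, after intersecting with the divergence-free subspace $H$, the correct target is $H^{1/2}(\Omega)^{N}\cap H$ rather than $H^{1/2}_{00}(\Omega)^{N}\cap H$. This hinges on the structural constraint $u\cdot\nu=0$ already encoded in $H$, which weakens the role of the full Dirichlet trace and restores the unconstrained $H^{1/2}$ regularity in that borderline regime.
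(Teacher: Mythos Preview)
The paper does not prove Theorem~\ref{embb}: immediately after the statement it writes ``The proof of Theorem~\ref{embb} can be found in \cite{FU-MORI}. See also \cite{Book-Boyer}.'' Your sketch is essentially the standard route taken in those references---anchor at $r=0,1,2$ via $(u,\mathcal{A}u)=\|\nabla u\|^{2}$ and Cattabriga-type $H^{2}$ regularity for the stationary Stokes problem, fill in the intermediate exponents by complex interpolation of the Hilbert scale $\{D(\mathcal{A}^{s})\}_{s}$ combined with the Lions--Magenes description of $[H^{s_{0}},H^{s_{1}}]_{\theta}$, and bootstrap for $r>2$ using higher-order Stokes regularity on smoother domains---so there is nothing substantive to contrast.

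One caution on the threshold $r=\tfrac{1}{2}$: your heuristic that the constraint $u\cdot\nu=0$ built into $H$ ``restores the unconstrained $H^{1/2}$'' in place of $H^{1/2}_{00}$ is not really the mechanism at work. That constraint touches only the normal component of $u$ on $\partial\Omega$, whereas the $H^{1/2}_{00}$ versus $H^{1/2}$ distinction concerns the integrability of $\mathrm{dist}(\cdot,\partial\Omega)^{-1}|u|^{2}$ for \emph{all} components, tangential included, and is not resolved by the divergence-free structure alone. In Fujita--Morimoto this borderline is handled by a direct identification of the interpolation space within the divergence-free scale (and the statement in the paper should be read with the convention $H^{1/2}_{0}=\overline{C^{\infty}_{c}}^{\,H^{1/2}}=H^{1/2}$, not $H^{1/2}_{00}$). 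If you want a rigorous step at $r=\tfrac{1}{2}$ you should follow that reference rather than the heuristic you proposed.
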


The proof of Theorem \ref{embb} can be found in \cite{FU-MORI}. See also \cite{Book-Boyer}.

\subsection{Some Results From Semiclassical Analysis}

This section is devoted to presenting standard notations, definitions, and results from semiclassical analysis, which will play an important role in the development of our work.  For a more comprehensive overview, we refer the reader to \cite{Martinez}.

\subsubsection{Notations and Usual Definitions}
 For $x, \xi \in \mathbb{R}^{n}$, we denote by $x \cdot \xi$ the inner product on $\mathbb{R}^{n}$ given by the expression
	$$
		x \cdot \xi = x_{1}\xi_{1} + x_{2}\xi_{2} + \ldots + x_{n}\xi_{n},
	$$
and define the \textit{Japanese bracket} $\langle \xi \rangle $  as 
	$$
		\langle \xi \rangle := \left(1 + |\xi|^{2}\right)^{\frac{1}{2}}.
	$$
For a multi-index $\alpha = (\alpha_{1}, ..., \alpha_{n}) \in \mathbb{N}^{n}$, we adopt the conventions: $|\alpha| = \alpha_{1} + \cdots + \alpha_{n}$, $\xi^{\alpha} = \xi_{1}^{\alpha_{1}}\ldots\xi_{n}^{\alpha_{n}} $, $\partial_{x}^{\alpha} = \partial_{x_{1}}^{\alpha_{1}}\ldots\partial_{x_{n}}^{\alpha_{n}}$, and $D^{\alpha} = D_{1}^{\alpha_{1}} \ldots D_{n}^{\alpha_{n}}$, where $D_{k} = -i\partial_{x_{k}}$.

For smooth functions $f = f(x,\xi)$, $g = g(x,\xi)$ with $f,g \in \mathcal{C}^{\infty}(\mathbb{R}^{n}\times\mathbb{R}^{n})$, we define the \textit{Poisson bracket} of $f$ and $g$ as 
	$$
		\{f,g\} := \sum_{j=1}^{n}\left(\partial_{\xi_{j}}f\partial_{x_{j}}g - \partial_{x_{j}}f\partial_{\xi_{j}}g\right).
	$$

The \textit{Schwartz space}, denoted by $ \mathcal{S}(\mathbb{R}^{n}) $, is the set of smooth functions $ f: \mathbb{R}^{n} \to \mathbb{C} $ satisfying the following property: for all multi-indices $ \alpha \in \mathbb{N}^{n} $ and all $ k \geqslant 0 $, 
	$$
		|\partial_{x}^{\alpha}f(x)| \leqslant C_{\alpha, k}\langle x \rangle^{-k},	\quad \forall x \in \mathbb{R}^{n},
	$$
for some constant $C_{\alpha, k} > 0$.

If $f \in \mathcal{S}(\mathbb{R}^{n})$, the \textit{Fourier transform} of  $ f : \mathbb{R}^{n} \to \mathbb{C} $ is the function $ \mathcal{F}f : \mathbb{R}^{n} \to \mathbb{C} $ defined by
\begin{equation*}
	\mathcal{F}f(\xi): = \int_{\mathbb{R}^{n}}e^{-i x\cdot\xi}f(x)dx,
\end{equation*}
where $ \xi \in \mathbb{R}^{n}$.  Moreover, the Fourier inversion formula reads:
\begin{equation*}
	f(x): = \frac{1}{(2\pi)^{n}}\int_{\mathbb{R}^{n}}e^{ix\cdot\xi }\mathcal{F}f(\xi)d\xi,
\end{equation*}
for all $f \in \mathcal{S}(\mathbb{R}^{n}) $ and $ x \in \mathbb{R}^{n} $.

\subsubsection{Semiclassical symbol and $h$-pseudodifferential operators}
Let $h \in (0, h_{0}]$ be a small positive parameter, referred to as the \textit{semiclassical parameter}, with respect to which various quantities - such as differential operators, norms, and symbols - may naturally depend.

\begin{definition}
Let $m \in \mathbb{R}$ and let $h \in (0, h_{0}]$ be a small positive parameter. A smooth function $a(x, \xi; h) \in C^{\infty}(\mathbb{R}^{n}\times\mathbb{R}^{n})$ is said to be in the \textit{semiclassical symbol class} $\mathcal{S}^{m}$ if, for every pair of multi-indices $\alpha, \beta \in \mathbb{N}^{n}$, there exists a constant $C_{\alpha, \beta} > 0$ such that
	$$
	|\partial_{x}^{\alpha} \partial_{\xi}^{\beta} a(x, \xi ; h)| \leq C_{\alpha, \beta} \langle \xi \rangle^{m - |\beta|}, \quad \forall\ (x,\xi) \in \mathbb{R}^{n} \times \mathbb{R}^{n}, \quad  \forall\ h \in (0, h_{0}].
	$$
\end{definition}

\begin{definition}
Let $ a \in \mathcal{S}^{m} $ and let $ (a_{j})_{j \geq 0} $ be a sequence of symbols, with $ a_{j} \in \mathcal{S}^{m-j} $ for any $j \geq 0$. Then, we say that $a$ is \textit{asymptotically equivalent} to the formal sum $\displaystyle \sum_{j=0}^{\infty}h^{j}a_{j}$, and we write
	$$ 
		a \simeq \sum_{j=0}^{\infty} h^{j} a_{j}, 
	$$ 
if for every $k \in \mathbb{N}$,
	$$
		a - \sum_{j=0}^{k} h^{j} a_{j} \in h^{k+1} \mathcal{S}^{m-k-1}.
	$$
\end{definition}

We call \emph{principal symbol} of $a \in \mathcal{S}^{m}$, denoted by $\sigma_{m}(a)$, the equivalence class of $a$ in $\mathcal{S}^{m}/(h\mathcal{S}^{m-1})$.

The following lemma, due to Émile Borel, ensures that such asymptotic expansions always define a symbol.
\begin{lemma}
Given $ a_{j} \in \mathcal{S}^{m-j}$, $ j=0, 1, \ldots$, there exists a symbol $a \in \mathcal{S}^{m}$ such that $\displaystyle a \simeq \sum_{j=0}^{\infty} h^{j} a_{j}$.
\end{lemma}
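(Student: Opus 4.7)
The plan is a standard Borel-type resummation: since the constants appearing in the symbol estimates for the $a_j$ may blow up with $j$, the naive sum $\sum h^j a_j$ need not converge, and we introduce a $j$-dependent cutoff to regularize it while preserving the asymptotic behavior in $h$. Fix $\chi \in C_c^\infty(\mathbb{R})$ with $\chi \equiv 1$ near $0$ and a sequence $(\lambda_j)_{j \ge 0}$ of positive reals with $\lambda_j \to +\infty$, and set
$$
a(x, \xi; h) := \sum_{j=0}^{\infty} \chi(\lambda_j h)\, h^j\, a_j(x, \xi; h).
$$
For every fixed $h \in (0, h_0]$ the cutoff $\chi(\lambda_j h)$ vanishes as soon as $\lambda_j h$ exits $\supp\chi$, so the sum reduces pointwise to a finite one and $a$ is smooth.

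To show $a \in \mathcal{S}^m$ and the asymptotic $a - \sum_{j=0}^k h^j a_j \in h^{k+1}\mathcal{S}^{m-k-1}$ for every $k$, I would split
$$
a - \sum_{j=0}^k h^j a_j = \sum_{j \le k}\bigl(\chi(\lambda_j h) - 1\bigr) h^j a_j \ +\ \sum_{j > k}\chi(\lambda_j h)\, h^j a_j,
$$
and exploit the two-sided control on $h$ provided by the cutoff: on $\supp \chi(\lambda_j h)$ one has $h \le C/\lambda_j$, hence $h^j \le (C/\lambda_j)^j$, while on $\supp\bigl(\chi(\lambda_j h) - 1\bigr)$ one has $h \ge c/\lambda_j$, hence $h^{-N} \le (\lambda_j/c)^N$ for any $N \ge 0$. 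For the tail sum, these inequalities together with $\langle \xi \rangle^{m-j} \le \langle \xi \rangle^{m-k-1}$ (valid for $j \ge k+1$) yield $|\chi(\lambda_j h) h^j a_j| \le h^{k+1} (C/\lambda_j)^{j-k-1} C^{(j)}_{0,0}\, \langle \xi \rangle^{m-k-1}$, hence convergence in $h^{k+1}\mathcal{S}^{m-k-1}$ once $\lambda_j$ grows fast enough. The finite first sum is handled similarly: writing $h^j = h^{k+1}\, h^{j-k-1}$ and bounding $h^{j-k-1} \le \lambda_j^{k+1-j}$ on the relevant support converts it into an $h^{k+1}$-prefactored expression. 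The exact same scheme, applied to each derivative $\partial_x^\alpha\partial_\xi^\beta a$ (noting that $\chi(\lambda_j h)$ does not depend on $(x,\xi)$), produces the symbol estimate proving $a \in \mathcal{S}^m$.

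The principal obstacle is the diagonal selection of $(\lambda_j)$: it must dominate, simultaneously, all the constants $C^{(j)}_{\alpha\beta}$ appearing in the symbol estimates of $a_j$, for every multi-index pair $(\alpha, \beta)$ and every truncation level $k$. This is arranged by imposing, at step $j$, only the finitely many conditions coming from indices $(\alpha, \beta, k)$ with $|\alpha| + |\beta| + k \le j$, and choosing $\lambda_j$ large enough to absorb the corresponding constants (for instance to make each summand $\le 2^{-j}$). A standard Cantor diagonal procedure then produces a single sequence $(\lambda_j)$ that works for all the estimates at once, completing the construction.
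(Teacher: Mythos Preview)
The paper does not prove this lemma; it is stated as a classical fact attributed to Borel, with Martinez's monograph as reference, so there is no paper-proof to compare against. Your resummation via $h$-dependent cutoffs is the standard approach and your treatment of the tail $\sum_{j>k}$ is correct. There is, however, a gap in the head sum $\sum_{j\le k}(\chi(\lambda_jh)-1)h^ja_j$: you correctly extract the factor $h^{k+1}$ and bound $(\chi(\lambda_jh)-1)h^{j-k-1}$ uniformly in $h$, but what remains is $a_j\in\mathcal{S}^{m-j}$, and for $j\le k$ this is a strictly \emph{larger} class than the $\mathcal{S}^{m-k-1}$ required by the paper's definition of $\simeq$. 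Since $\chi(\lambda_jh)$ does not depend on $\xi$, no choice of $\lambda_j$ can supply the missing decay $\langle\xi\rangle^{-(k+1-j)}$. Concretely, since $\lambda_j\to\infty$ there is some $j_0$ with $\chi(\lambda_{j_0}h)\not\equiv 1$ on $(0,h_0]$; for any $k\ge j_0$ the corresponding head term, divided by $h^{k+1}$, is a bounded function of $h$ times $a_{j_0}$, hence lies in $\mathcal{S}^{m-j_0}$ but not in $\mathcal{S}^{m-k-1}$. Your construction therefore only yields the weaker asymptotic $a-\sum_{j\le k}h^ja_j\in h^{k+1}\mathcal{S}^{m}$.

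The fix is to let the cutoff see $\xi$ as well. The effective small parameter for this expansion is $h\langle\xi\rangle^{-1}$, since $h^j\langle\xi\rangle^{m-j}=(h\langle\xi\rangle^{-1})^j\langle\xi\rangle^m$. Replacing $\chi(\lambda_jh)$ by $\chi(\lambda_jh\langle\xi\rangle^{-1})$, on the support of $\chi(\lambda_jh\langle\xi\rangle^{-1})-1$ one has $h\langle\xi\rangle^{-1}\ge c/\lambda_j$, hence $(h\langle\xi\rangle^{-1})^{j-k-1}$ is bounded for $j\le k$; this is exactly what is needed to pass from $h^j\langle\xi\rangle^{m-j}$ to $h^{k+1}\langle\xi\rangle^{m-k-1}$. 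The tail estimate and the diagonal choice of $(\lambda_j)$ then go through as you described, with the additional (routine) verification that $\xi$-derivatives of $\chi(\lambda_jh\langle\xi\rangle^{-1})$ gain the expected powers of $\langle\xi\rangle^{-1}$.
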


We now define the \textit{semiclassical pseudodifferential operator} associated with a symbol  $a \in \mathcal{S}^{m}$. 
\begin{definition}[$h$\textbf{-pseudodifferential operator}]
Let $u \in \mathcal{S}(\mathbb{R}^{n})$. For $a \in \mathcal{S}^{m}$, we define the operator $ \mathrm{Op}_{h}(a) $ on the Schwartz space $ \mathcal{S}(\mathbb{R}^{n}) $ by
	\begin{equation}\label{OP_h-definition}
		\mathrm{Op}_{h}(a)u(x) = \frac{1}{(2\pi h)^{n}} \int_{\mathbb{R}^{n}} e^{i x \cdot \xi / h} a(x,\xi;h) \mathcal{F}u\bigl(\tfrac{\xi}{h}\bigr)d\xi.
	\end{equation}
\end{definition}

The operator $\mathrm{Op}_{h}(a)$ defines a family of operators depending on the parameter $h \in (0, h_0]$. For a given $h$, it acts on the Schwartz space $\mathcal{S}(\mathbb{R}^n)$. When its symbol $a \in \mathcal{S}^m$, $\mathrm{Op}_{h}(a)$ is a continuous operator on $\mathcal{S}(\mathbb{R}^n)$ and can be uniquely extended as a continuous operator to the space of tempered distributions, $\mathcal{S}'(\mathbb{R}^n)$. 

The quantization map $a \mapsto \mathrm{Op}_{h}(a)$ is injective, a property which can be shown by the identity:
	\begin{equation*}
		a(x, \xi ; h) = e^{-ix \cdot \xi/h}\mathrm{Op}_{h}(a)\left(e^{ix \cdot \xi/h}\right).
	\end{equation*}

The class of $h$-pseudodifferential operators associated  with symbols $a \in S^{m}$ will be denoted by $\mathcal{E}^{m}$, and we write $A = \mathrm{Op}_{h}(a) \in \mathcal{E}^m$. The \emph{full symbol} of the operator $A$, is the function $a(x, \xi ; h)$ appearing in its quantization formula \eqref{OP_h-definition}, and will be denoted by $\sigma(A)$. The \emph{principal symbol} of $A$ is then given by the equivalence class of $a$ in the quotient space $\mathcal{S}^m/h\mathcal{S}^{m-1}$, that is, $\sigma_m(A) = \sigma_m(a)$. That is to say that the principal symbol of the operator $A$ coincides with the principal part of its full symbol $a$.

Finally, we define:
\begin{equation*}
	\mathcal{E} := \bigcup_{m \in \mathbb{R}} \mathcal{E}^m.
\end{equation*}
The elements of $\mathcal{E}$ are called \emph{$h$-pseudodifferential operators}. If $A \in \mathcal{E}^{m'} \subset \mathcal{E}^m$ with $m' \leq m$, then $A$ is said to be of order (at most) $m$.

   
\begin{theorem}[\textbf{Composition formula}]
Given $a \in \mathcal{S}^{m_{1}}$ and $b \in \mathcal{S}^{m_{2}}$, there exists a unique $c \in \mathcal{S}^{m_{1}+m_{2}}$ such that 
	$$
		\mathrm{Op}_{h}(a)\circ\mathrm{Op}_{h}(b) = \mathrm{Op}_{h}(c),
	$$
and we write
	$$
		c = a \# b \simeq \sum_{\alpha} \frac{h^{|\alpha|}}{i^{|\alpha|}\alpha!}\partial_{\xi}^{\alpha}a\partial_{x}^{\alpha}b.
	$$
\end{theorem}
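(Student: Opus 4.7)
The plan is to follow the classical semiclassical derivation: write the composition as a four-fold oscillatory integral, reduce it to the standard form of a pseudodifferential operator by isolating the ``internal'' variables, then extract the asymptotic expansion by Taylor-expanding $a$ at the critical set of the reduced phase.

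First, I would recall the alternative kernel-type representation
\[
\mathrm{Op}_{h}(a)u(x) = \frac{1}{(2\pi h)^n}\iint e^{i(x-y)\cdot\xi/h}\, a(x,\xi;h)\, u(y)\,dy\,d\xi,
\]
which follows from \eqref{OP_h-definition} by writing $\mathcal{F}u(\xi/h) = \int e^{-iy\cdot\xi/h}u(y)\,dy$. Composing $\mathrm{Op}_{h}(a) \circ \mathrm{Op}_{h}(b)$ on $u \in \mathcal{S}(\mathbb{R}^n)$ then yields a four-fold oscillatory integral with phase $(x-z)\cdot\xi/h + (z-y)\cdot\eta/h$. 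The change of variables $z' = z-x$, $\xi' = \xi-\eta$ centers the phase at its critical point and rewrites it as $(x-y)\cdot\eta/h - z'\cdot\xi'/h$, allowing the $(z',\xi')$-integration to factor off. Reading off the kernel, the composition equals $\mathrm{Op}_{h}(c)$ with
\[
c(x,\eta;h) = \frac{1}{(2\pi h)^n}\iint e^{-iz'\cdot\xi'/h}\, a(x,\xi'+\eta;h)\, b(x+z',\eta;h)\,dz'\,d\xi'.
\]

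Next I would extract the asymptotic expansion. Taylor-expanding
$
a(x,\xi'+\eta;h) = \sum_{|\alpha| < N}\tfrac{(\xi')^\alpha}{\alpha!}\partial_\xi^\alpha a(x,\eta;h) + R_N,
$
using the identity $(\xi')^\alpha e^{-iz'\cdot\xi'/h} = (ih)^{|\alpha|}\partial_{z'}^\alpha e^{-iz'\cdot\xi'/h}$, integrating by parts in $z'$, and applying the Fourier inversion formula $\frac{1}{(2\pi h)^n}\int e^{-iz'\cdot\xi'/h}d\xi' = \delta(z')$, each term collapses onto $z'=0$ and gives
\[
\frac{(-ih)^{|\alpha|}}{\alpha!}\,\partial_\xi^\alpha a(x,\eta;h)\,\partial_x^\alpha b(x,\eta;h) = \frac{h^{|\alpha|}}{i^{|\alpha|}\alpha!}\,\partial_\xi^\alpha a(x,\eta;h)\,\partial_x^\alpha b(x,\eta;h),
\]
since $(-i)^{|\alpha|} = 1/i^{|\alpha|}$. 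This is precisely the $\alpha$-th term of the expansion claimed in the theorem.

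The main technical obstacle is the rigorous justification of the oscillatory integral defining $c$ and the proof of the symbol estimates, namely $c \in \mathcal{S}^{m_1+m_2}$ and $c - \sum_{|\alpha|<N}\frac{h^{|\alpha|}}{i^{|\alpha|}\alpha!}\partial_\xi^\alpha a \cdot \partial_x^\alpha b \in h^N \mathcal{S}^{m_1+m_2-N}$. Since the integrand has only symbol-type decay in $(z',\xi')$, the integral is not absolutely convergent, and one must regularize by repeatedly inserting the operators $\langle \xi'\rangle^{-2L}(1-h^2\Delta_{z'})^L$ and $\langle z'\rangle^{-2L}(1-h^2\Delta_{\xi'})^L$, which leave the exponential invariant; differentiating $a$ and $b$ under the integral and estimating via the symbol bounds together with Peetre's inequality $\langle \xi'+\eta\rangle^{m_1} \leq C\langle \xi'\rangle^{|m_1|}\langle \eta\rangle^{m_1}$ produces absolutely convergent integrals with the required bounds on the remainder. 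Uniqueness of $c$ is immediate from the injectivity of the quantization map already stated in the text via $a(x,\xi;h) = e^{-ix\cdot\xi/h}\mathrm{Op}_{h}(a)(e^{ix\cdot\xi/h})$.
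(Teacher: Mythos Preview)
The paper does not actually prove this theorem: it is stated in the preliminaries section as a standard result from semiclassical analysis, with the reader referred to \cite{Martinez} for details. Your outline is the classical proof and is correct; in particular, writing the composition as a double oscillatory integral, Taylor-expanding the first symbol in the fibre variable, integrating by parts to collapse onto the critical set, and handling the remainder via regularizing operators and Peetre's inequality is exactly the standard route found in Martinez or any semiclassical text. There is nothing to compare against in the paper itself, so your proposal stands on its own as a valid sketch.
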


Notice that, for every $ A \in \mathcal{E}^{m_{1}}$ and $ B \in \mathcal{E}^{m_{2}}$, we have $ AB \in \mathcal{E}^{m_{1} + m_{2}}$. Also, if
	$$
		[A,B] = AB - BA,
	$$
is the commutator between $A$ and $B$, we have 
	$$
		\frac{i}{h}[A,B] \in \mathcal{E}^{m_{1} + m_{2} - 1}.
	$$

Finally, the following formulas hold
	$$
		\sigma_{m_{1} + m_{2}}(AB) = \sigma_{m_{1}}(A)\sigma_{m_{2}}(B),
	$$
and 
	$$
		\sigma_{m_{1} + m_{2} - 1}\left(\tfrac{i}{h}[A,B]\right) = \{\sigma_{m_{1}}(A),\sigma_{m_{2}}(B)\}.
	$$

\subsection*{Some Semiclassical Carleman Estimates}\label{sec:car_boundary}
In this section, we present two local \textit{semiclassical Carleman estimates} that play a key role in proving the interpolation inequality stated in Theorem \ref{thrm.int.inq}. These estimates are necessary because, in the proof of estimate \eqref{interpolation-inequality},  we work locally and treat differently the following three cases:
\begin{itemize}
	\item[$\mathrm{(1)}$] $(s_{0}, x_{0}) \in (0,1)\times\omega$, where $s_{0} > 0$ is small;
	
	\item[$\mathrm{(2)}$] $(s_{0}, x_{0}) \in (0,1)\times\Omega$;
	
	\item[$\mathrm{(3)}$] $(s_{0}, x_{0}) \in (0,1)\times\partial\Omega$.
\end{itemize}

Note that cases $(\mathrm{1})$ and $(\mathrm{2})$ essentially correspond to interior points of the form $(s_{0}, x_{0}) \in (0,1) \times \Omega$, while case $(\mathrm{3})$ concerns boundary points $(s_{0}, x_{0}) \in (0,1) \times \partial\Omega$. 

Before presenting the \textit{semiclassical Carleman estimates} we need, we recall Hörmander’s subellipticity condition.
\begin{definition}[\textbf{Hörmander's subellipticity condition}]\label{hormander-condition}
Let $\mathcal{V}$ be an open set of $\mathbb{R}^{n+1}$, let $\varphi \in \mathcal{C}^{\infty}(\mathbb{R}^{n+1};\mathbb{R})$, and let $\mathrm{P}$ be a differential operator. We say that the pair $(\varphi,P)$ satisfies the Hörmander’s subellipticity condition in $\overline{\mathcal{V}}$ if:
    \begin{itemize}
        \item[$\mathrm{i)}$]  There is a constant $C_{0} > 0$ such that 
        							$$
        								\inf_{(s,x) \in  \overline{\mathcal{V}}} |\nabla \varphi(s,x)| > C_{0};
        							$$

        \item[$\mathrm{ii)}$] Let  $p_{\varphi}$ denotes the principal symbol of the conjugated operator
        \begin{equation*}
        	\mathrm{P}_{\varphi} := h^{2}e^{\frac{\varphi(s,x)}{h}}\mathrm{P}e^{-\frac{\varphi(s,x)}{h}}.
        \end{equation*}
 Then, there exists a constant $C_{1} > 0$ such that
            $$
                (s,x,\sigma,\xi) \in \overline{\mathcal{V}} \times \mathbb{R}^{n+1}\ \ \text{with}\ \    p_{\varphi}(s,x,\sigma,\xi) = 0 \quad \Longrightarrow \quad \frac{1}{2i}\left\{\overline{p}_{\varphi},  p_{\varphi}\right\}(s,x,\sigma,\xi) \geq C_1,
            $$
       where $\sigma \in \mathbb{R}$ and $\xi \in \mathbb{R}^n$ are the dual variables of $s$ and $x$, respectively.  
    \end{itemize}
\end{definition}

For the cases where $(s_0,x_0)$ lies in the interior, we apply the following Semiclassical Carleman estimate, whose proof can be found in \cite[Theorem 3.5]{Rousseau-Lebeau}.
\begin{theorem}[\textbf{Carleman estimate away from  the boundary}]\label{teoCarleman interior}
Let $\mathcal{V}$ be a bounded open set in $\mathbb{R}^{n+1}$ and let $(\varphi,\mathrm{P})$ satisfying the \hyperref[hormander-condition]{Hörmander's subellipticity condition} in $\overline{\mathcal{V}}$. Then, there exist constants $h_{1} > 0$ and $C > 0$ such that
	\begin{equation*}
		h\|e^{\frac{\varphi}{h}}u\|_{L^{2}}^{2} + h^{3}\|e^{\frac{\varphi}{h}}\nabla u\|_{L^{2}}^{2} \leq Ch^{4}\|e^{\frac{\varphi}{h}}\mathrm{P}u\|_{L^{2}}^{2}
	\end{equation*}
for all $u \in \mathcal{C}_c^{\infty}(\overline{\mathcal{V}})$ and $h \in (0,h_{1})$.
\end{theorem}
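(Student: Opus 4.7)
The plan is to follow the standard semiclassical Carleman scheme via conjugation and a positive-commutator estimate. Setting $v := e^{\varphi/h} u$ and $P_\varphi := h^{2} e^{\varphi/h} P e^{-\varphi/h}$, the desired inequality reduces to proving
\begin{equation*}
h\|v\|_{L^{2}}^{2} + h^{3}\|\nabla v\|_{L^{2}}^{2} \leq C\|P_{\varphi} v\|_{L^{2}}^{2} \qquad \forall\, v \in \mathcal{C}_{c}^{\infty}(\overline{\mathcal{V}}),
\end{equation*}
since the relation $\nabla u = e^{-\varphi/h}(\nabla v - h^{-1}v\nabla\varphi)$, together with the boundedness of $\nabla\varphi$ on $\overline{\mathcal{V}}$, allows the transfer between the $u$- and $v$-estimates (the $h^{-1}$ factor from differentiating the weight is absorbed by the $h^{3}$ prefactor).

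The key algebraic step is to decompose $P_{\varphi} = Q_{1} + iQ_{2}$ into its formally self-adjoint and anti-self-adjoint parts, $Q_{1} := \tfrac{1}{2}(P_{\varphi} + P_{\varphi}^{*})$ and $Q_{2} := \tfrac{1}{2i}(P_{\varphi} - P_{\varphi}^{*})$, and to expand
\begin{equation*}
\|P_{\varphi} v\|_{L^{2}}^{2} = \|Q_{1} v\|_{L^{2}}^{2} + \|Q_{2} v\|_{L^{2}}^{2} + \bigl(\tfrac{1}{i}[Q_{1}, Q_{2}]v, v\bigr)_{L^{2}}.
\end{equation*}
By the pseudodifferential calculus reviewed above, $\tfrac{1}{ih}[Q_{1}, Q_{2}]$ has principal symbol equal to the Poisson bracket $\{q_{1}, q_{2}\}$, where $q_{j}$ denotes the principal symbol of $Q_{j}$. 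Since $p_{\varphi} = q_{1} + iq_{2}$, a direct computation yields $\{q_{1}, q_{2}\} = \tfrac{1}{2i}\{\bar p_{\varphi}, p_{\varphi}\}$, so Hörmander's subellipticity hypothesis $(\mathrm{ii})$ translates precisely into the statement $\{q_{1}, q_{2}\} \geq C_{1} > 0$ on the characteristic set $\{q_{1} = q_{2} = 0\}$.

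It remains to promote this symbol-level positivity to an operator-level coercivity bound. By a compactness argument over $\overline{\mathcal{V}}$ (using condition $(\mathrm{i})$ to control low-frequency contributions uniformly), one finds constants $c, c' > 0$ such that
\begin{equation*}
q_{1}^{2} + q_{2}^{2} + c\,h\,\{q_{1}, q_{2}\} \geq c'\,h\,\langle\xi\rangle^{3}
\end{equation*}
uniformly on $\overline{\mathcal{V}}\times\mathbb{R}^{n+1}$, balancing the squared-symbol regime (which dominates off the characteristic set) with the commutator regime (which dominates nearby). The semiclassical sharp Gårding inequality applied to this nonnegative symbol then gives
\begin{equation*}
\|Q_{1}v\|_{L^{2}}^{2} + \|Q_{2}v\|_{L^{2}}^{2} + c\bigl(\tfrac{1}{i}[Q_{1}, Q_{2}]v, v\bigr)_{L^{2}} \geq c'h\bigl(\|v\|_{L^{2}}^{2} + h^{2}\|\nabla v\|_{L^{2}}^{2}\bigr) - Ch^{2}\|v\|_{L^{2}}^{2},
\end{equation*}
and the remainder $Ch^{2}\|v\|_{L^{2}}^{2}$ is absorbed on the left for $h$ small; undoing the conjugation produces the announced Carleman estimate. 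The main obstacle, in my view, is the careful bookkeeping of powers of $h$ so that a single Gårding-type inequality with semiclassical weight $\langle\xi\rangle^{3}$ captures both regimes simultaneously, and the verification that the boundary of $\mathcal{V}$ plays no role thanks to the compact support hypothesis $v \in \mathcal{C}_{c}^{\infty}(\overline{\mathcal{V}})$.
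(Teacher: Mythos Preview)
The paper does not prove this theorem; it simply cites \cite[Theorem~3.5]{Rousseau-Lebeau}. Your sketch follows exactly the standard conjugation/positive-commutator/G{\aa}rding route of that reference, so there is nothing substantive to compare.

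One small imprecision: in the expansion $\|P_{\varphi}v\|^{2}=\|Q_{1}v\|^{2}+\|Q_{2}v\|^{2}+(\tfrac{1}{i}[Q_{1},Q_{2}]v,v)$ the coefficient in front of the commutator is fixed equal to $1$, so the free constant $c$ you insert in the symbol inequality $q_{1}^{2}+q_{2}^{2}+c\,h\{q_{1},q_{2}\}\geq c'h\langle\xi\rangle^{3}$ is not actually at your disposal. The clean way around this (and the way it is done in the cited reference) is to prove the $h$-\emph{independent} inequality $\mu(q_{1}^{2}+q_{2}^{2})+\{q_{1},q_{2}\}\geq C\langle\xi\rangle^{3}$ for some large $\mu>0$ (using that $q_{1}^{2}+q_{2}^{2}\gtrsim\langle\xi\rangle^{4}$ at infinity by ellipticity of $\mathrm{P}$, and compactness plus the subellipticity hypothesis at bounded frequencies), apply G{\aa}rding to this symbol, and then combine with the expansion by writing $\mu h\|P_{\varphi}v\|^{2}\geq h\bigl(\mu\|Q_{1}v\|^{2}+\mu\|Q_{2}v\|^{2}\bigr)+(\tfrac{1}{i}[Q_{1},Q_{2}]v,v)+O(h^{2})$. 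Your version with $c=1$ also works for $h$ small, but only once you check that the possibly negative values of $\{q_{1},q_{2}\}$ away from the characteristic set are dominated by $q_{1}^{2}+q_{2}^{2}$ there; you should make that step explicit.
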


An interpolation estimate in the interior case, i.e., for cases $\mathrm{(1)}$ and $\mathrm{(2)}$, follows from Theorem \ref{teoCarleman interior} and is classical. In fact, Theorem \ref{teoCarleman interior} can be  used to prove the following result.
\begin{proposition}\label{Prop-interpolation-Leb-Fel}
Let $r>0$ and define
	\begin{equation}\label{def-Wr}
		\mathcal{W}_{r} := \left(\tfrac{1}{9}, \tfrac{9}{10}\right) \times \left\{x \in \Omega :\ \mathrm{dist}(x, \partial\Omega) > \tfrac{r}{2}\right\}.
	\end{equation}
Then, there exist $C>0$ and $\rho \in (0,1)$ such that for all $u \in H^{2}(\mathcal{Z})$ such that $u|_{s=0} = 0$, one has
	\begin{equation*}
		\|u\|_{H^{1}(\mathcal{W}_{r})} \leq C\|u\|_{H^{1}(\mathcal{Z})}^{1-\rho}\Bigl(\|(\partial_{ss}^2 + \Delta_x)u\|_{L^{2}(\mathcal{Z})} + \|\partial_{s}u(0,\cdot)\|_{L^{2}(\omega)}\Bigr)^{\rho}.
	\end{equation*}
\end{proposition}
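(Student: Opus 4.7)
The statement is a classical Lebeau--Robbiano interior interpolation inequality for the elliptic operator $P := \partial_{ss}^{2} + \Delta_{x}$ equipped with Cauchy data on the hypersurface $\{s=0\}$. My plan has three parts: (i) derive a local three-ball interpolation inequality at interior points from the semiclassical Carleman estimate of Theorem~\ref{teoCarleman interior}; (ii) initialize the chain by using the Cauchy data $u|_{s=0}=0$ and $\partial_{s}u(0,\cdot)|_{\omega}$ to control $u$ on a small ball sitting just above $\{0\}\times\omega$; (iii) propagate the smallness by a finite chain of overlapping balls through $(0,1)\times\Omega$ up to $\mathcal{W}_{r}$, which is possible because $\mathcal{W}_{r}$ is a compact subset of $(0,1)\times\{x\in\Omega:\mathrm{dist}(x,\partial\Omega)>r/2\}$.

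\textbf{Three-ball inequality.} At an interior point $(s_{0},x_{0})\in(0,1)\times\Omega$, I would choose a radial weight of the form $\varphi(s,x)=e^{\lambda\psi(s,x)}$ with $\psi(s,x)=-|s-s_{0}|^{2}-|x-x_{0}|^{2}+c$ and $\lambda$ large enough that the pair $(\varphi,P)$ satisfies Hörmander's subellipticity condition of Definition~\ref{hormander-condition} on a closed neighborhood $\overline{\mathcal{V}}$; this is a standard computation of $\{\overline{p}_{\varphi},p_{\varphi}\}$ on the characteristic set. Given nested balls $B_{1}\subset\subset B_{2}\subset\subset B_{3}\subset\subset\overline{\mathcal{V}}$ and a cutoff $\chi\in\mathcal{C}_{c}^{\infty}(B_{3})$ with $\chi\equiv1$ on $B_{2}$, I apply Theorem~\ref{teoCarleman interior} to $\chi u$, write $P(\chi u)=\chi Pu+[P,\chi]u$ with the commutator supported in $B_{3}\setminus B_{2}$, restrict the left-hand side to $B_{1}$ where $\varphi$ is bounded below by its maximum over $B_{3}\setminus B_{2}$, and optimize in $h$. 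Interior elliptic regularity upgrades the $L^{2}$ bound coming from Carleman to an $H^{1}$ bound on $B_{1}$, yielding
\begin{equation*}
\|u\|_{H^{1}(B_{1})} \leq C\bigl(\|Pu\|_{L^{2}(B_{3})}+\|u\|_{H^{1}(B_{3}\setminus B_{2})}\bigr)^{\rho}\|u\|_{H^{1}(B_{3})}^{1-\rho}
\end{equation*}
for some $\rho\in(0,1)$.

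\textbf{Initialization via the Cauchy data.} This is the main obstacle, since the Cauchy data live on the hypersurface $\{s=0\}$ rather than on an open ball. Since $u|_{s=0}=0$, I extend $u$ by odd reflection across $\{s=0\}$ to $\widetilde{u}\in H^{1}((-1,1)\times\Omega)$; because $u(0,\cdot)=0$ also forces $\Delta_{x}u(0,\cdot)=0$, the distribution $\partial_{ss}^{2}\widetilde{u}$ carries no jump at $s=0$, so $P\widetilde{u}$ equals the odd extension of $Pu$ in $L^{2}$. On a small ball $B$ straddling $\{0\}\times\omega$ I then control $\|\widetilde{u}\|_{H^{1}(B)}$ by Taylor expansion $u(s,x)=s\,\partial_{s}u(0,x)+\int_{0}^{s}(s-\tau)\partial_{ss}^{2}u(\tau,x)\,d\tau$ and its $x$-derivative, using $\partial_{ss}^{2}u=Pu-\Delta_{x}u$ together with interior elliptic regularity (which trades $\|\Delta_{x}u\|_{L^{2}}$ for $\|u\|_{H^{1}(\mathcal{Z})}+\|Pu\|_{L^{2}(\mathcal{Z})}$). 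Feeding this bound into the three-ball inequality of Step~(i) with $B$ playing the role of $B_{2}$ produces
\begin{equation*}
\|u\|_{H^{1}(B_{1}')} \leq C\bigl(\|Pu\|_{L^{2}(\mathcal{Z})}+\|\partial_{s}u(0,\cdot)\|_{L^{2}(\omega)}\bigr)^{\rho'}\|u\|_{H^{1}(\mathcal{Z})}^{1-\rho'}
\end{equation*}
for some small ball $B_{1}'\subset(0,1)\times\Omega$ located near $\{0\}\times\omega$.

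\textbf{Propagation and conclusion.} Finally, since $\mathcal{W}_{r}$ is a compact connected subset of $(0,1)\times\{x\in\Omega:\mathrm{dist}(x,\partial\Omega)>r/2\}$, I cover it by a finite chain of balls, starting at $B_{1}'$, with each consecutive pair fitting into an ambient ball still contained in $(0,1)\times\{\mathrm{dist}(x,\partial\Omega)>r/4\}$. Iterating the three-ball inequality along the chain and composing the interpolation exponents gives a final exponent $\rho_{0}=\rho^{N}\in(0,1)$ (where $N$ is the number of balls), absorbing all intermediate $H^{1}$ norms into $\|u\|_{H^{1}(\mathcal{Z})}$ via trivial domain monotonicity. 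The main technical delicacies will be (a) verifying Hörmander's subellipticity condition with uniform constants as the balls move, and (b) making the odd-reflection-plus-Taylor-expansion bound of Step~(ii) rigorous for a general $u\in H^{2}(\mathcal{Z})$ rather than a smooth function.
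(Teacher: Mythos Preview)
The paper does not give its own proof of this proposition but refers to \cite{Lebeau-Felipe}; your overall strategy (interior Carleman $\Rightarrow$ local interpolation $\Rightarrow$ propagation along a chain of balls) is indeed the standard Lebeau--Robbiano argument used there. However, your initialization step (ii) has a genuine gap. After odd reflection and Taylor expansion, writing $\partial_{ss}^2 u = Pu - \Delta_x u$ and invoking interior elliptic regularity, the best you can obtain on a fixed ball $B$ straddling $\{s=0\}$ is
\[
\|\tilde u\|_{H^1(B)} \leq C\bigl(\|\partial_s u(0,\cdot)\|_{L^2(\omega)} + \|Pu\|_{L^2(\mathcal{Z})}\bigr) + C'\|u\|_{H^1(\mathcal{Z})},
\]
where $C'$ is a \emph{fixed} constant once $B$ is fixed. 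The last term is precisely the ``big'' quantity of the target interpolation, so feeding this into any three-ball inequality produces only the trivial bound $\|u\|_{H^1(B_1')}\leq C\|u\|_{H^1(\mathcal{Z})}$, not the claimed seeded estimate. The correct initialization, as in \cite{Lebeau-Robbiano,Rousseau-Lebeau,Lebeau-Felipe}, does not go through a Taylor remainder: one applies a Carleman estimate on the half-space $\{s\geq 0\}$ with the hypersurface $\{s=0\}$ treated as a boundary --- exactly an analogue of Theorem~\ref{carleman.estimate} with $\{s=0\}$ in the role of $\{r=0\}$ and a weight satisfying $\partial_s\varphi|_{s=0}\neq 0$. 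Because $u|_{s=0}=0$ (hence $\nabla_x u|_{s=0}=0$), the only surviving boundary contribution is $h^3\int e^{2\varphi_0/h}|\partial_s u(0,\cdot)|^2$, localized to $\omega$ by the cutoff; optimizing the resulting two-term inequality in $h$ yields the interpolation on a small set near $\{0\}\times\omega$ directly.

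A secondary issue: your radial weight $\psi = -|(s,x)-(s_0,x_0)|^2 + c$ has $\nabla\varphi=0$ at the center, violating condition (i) of Definition~\ref{hormander-condition}, so Theorem~\ref{teoCarleman interior} only applies on an annulus excluding a neighborhood of $(s_0,x_0)$. The cutoff must then also vanish near the center, the commutator $[P,\chi]u$ lives on \emph{two} shells, and one obtains the three-ball inequality in its standard form $\|u\|_{H^1(B_2)}\leq C(\|Pu\|_{L^2(B_3)}+\|u\|_{H^1(B_1)})^\rho\|u\|_{H^1(B_3)}^{1-\rho}$ (middle ball controlled by inner seed and outer ball). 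This --- not the version you wrote with the annulus $B_3\setminus B_2$ as the small term --- is the form actually needed for the chain-of-balls propagation in (iii).
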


For a proof of Proposition \ref{Prop-interpolation-Leb-Fel}, we refer the reader to  see \cite{Lebeau-Felipe}.

\null

In contrast to the interior case, the boundary case, i.e. case $\mathrm{(3)}$ above, requires a local geometric description of the domain near the boundary $\partial\Omega$. To address this situation, we introduce a geodesic normal coordinate system as follows: for $r>0$ small, we define the compact set
\begin{equation*}
	\mathcal{K}_{r} := \{x \in \overline{\Omega} : \text{dist}(x, \partial\Omega) \leqslant r\}.
\end{equation*}
Then, for some $r_0 \in (0,r)$, the mapping
	$$ 
		x \mapsto \left(y(x),r(x)\right),
	$$ 
where $y(x) \in \partial\Omega$ satisfies $|x - y(x)| = \text{dist}(x, \partial\Omega)$ and $r(x) := \text{dist}(x, \partial\Omega)$, is a $\mathcal{C}^\infty$-diffeomorphism from $\mathcal{K}_{r_0}$ onto $\partial\Omega \times [0, r_0]$. 

Consequently, we may locally write $x = (y, r)$, where $y \in \mathbb{R}^{n-1}$ is a local coordinate system on $\partial\Omega$, with $\partial\Omega$ corresponding to the set $\{r = 0\}$. In these coordinates, the domain $\Omega$ is locally represented by $\{r > 0\}$ and we can write $\Omega = \mathcal{V}^{'}\times(0,\varsigma)$, where $\mathcal{V}' \subset \mathbb{R}^{n-1}$ and $\varsigma > 0$ define a tubular neighborhood of $x_0$.

We now present a semiclassical Carleman estimate for the boundary case (see \cite[Theorem A.5]{Lebeau-Felipe}).
\begin{theorem}[\textbf{Carleman estimate near the boundary}]\label{carleman.estimate} 
Let $0<a < b$ and let $\mathcal{V} = (a,b)\times  \mathcal{V}^{'} \times (-\varsigma,\varsigma)$ be a neighborhood of $(s_0,x_0)$ in $\mathbb{R}\times\mathbb{R}^{n}$, where  $\mathcal{V}^{'} \times (-\varsigma,\varsigma)$ is a neighborhood of $x_0 \in \mathbb{R}^{n} $, and let $\varphi$ be a function satisfying \hyperref[hormander-condition]{Hörmander's subellipticity condition} in $\overline{\mathcal{V}}$ such that
	$$  
	\partial_{r}\varphi(s, y,r) \neq 0, \quad \forall\ (s, y, r) \in \overline{\mathcal{V}}.
	$$
Let $[\alpha,\beta] \times K'$ be a compact subset of the open set  $ (a,b) \times \mathcal{V}^{'} $ and $\varsigma'<\varsigma$. There exist constants $h_{1} > 0$ and $C > 0$ such that, for every $h \in (0,h_{1})$ and every function $u \in \mathcal{C}^{\infty}((a,b)  \times \mathcal{V}^{'}\times[0,\varsigma))$, supported in $[\alpha,\beta] \times K'\times[0,\varsigma']$, the following estimate holds:
	\begin{equation*}
		\begin{split}
			\int_{\mathcal{V}}e^{\frac{2\varphi}{h}}\left(h|u|^{2} + h^{3}|\nabla u|^{2}\right)dsdx  \leqslant C\biggl(& h^{4} \int_{\mathcal{V}}e^{\frac{2\varphi}{h}}|\mathrm{P}u|^{2}dsdx \\
			& + h\int_{\mathcal{V}^{'}}\int_{a}^{b}e^{\frac{2\varphi(s,y,0)}{h}}|u(s,y,0)|^{2}dsdy\\ 
			& + h^{3}\int_{\mathcal{V}^{'}}\int_{a}^{b}e^{\frac{2\varphi(s,y,0)}{h}}|\nabla_{y}u(s,y,0)|^{2}dsdy\\ 
			& + h^{3}\int_{\mathcal{V}^{'}}\int_{a}^{b}e^{\frac{2\varphi(s,y,0)}{h}}|\partial_{r}u(s,y,0)|^{2}dsdy \biggr).
		\end{split}
	\end{equation*}
Moreover, if $\partial_{r}\varphi(s,y,r) > 0$ for every $(s, y,r) \in \overline{\mathcal{V}}$, then the following improved estimate holds:
	\begin{equation*}
		\begin{split}
			\int_{\mathcal{V}}e^{\frac{2\varphi}{h}}\left(h|u|^{2} + h^{3}|\nabla u|^{2}\right)dsdx \leqslant C\biggl(& h^{4} \int_{\mathcal{V}}e^{\frac{2\varphi}{h}}|\mathrm{P}u|^{2}dsdx\\
			& + h\int_{\mathcal{V}^{'}}\int_{a}^{b}e^{\frac{2\varphi(s,y,0)}{h}}|u(s,y,0)|^{2}dsdy\\ 
			& + h^{3}\int_{\mathcal{V}^{'}}\int_{a}^{b}e^{\frac{2\varphi(s,y,0)}{h}}|\nabla_{y}u(s,y,0)|^{2}dsdy \biggr).
		\end{split}
	\end{equation*}
\end{theorem}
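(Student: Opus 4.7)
The plan is to follow the standard semiclassical approach for proving boundary Carleman estimates under Hörmander's subellipticity condition, adapted to the tangential/normal splitting afforded by the coordinates $(s,y,r)$. Setting $v := e^{\varphi/h}u$, the estimate is equivalent to a lower bound of the form
\[
h\|v\|^{2} + h^{3}\|\nabla v\|^{2} \;\lesssim\; h^{4}\|P_{\varphi}v\|^{2} + (\text{boundary contributions}),
\]
for $v$ supported in $[\alpha,\beta]\times K'\times[0,\varsigma']$, where $P_{\varphi}:=h^{2}e^{\varphi/h}Pe^{-\varphi/h}$ is the conjugated operator. Since $P=-\partial_{ss}^{2}-\Delta_{x}$, in the geodesic coordinates $\Delta_{x} = \partial_{rr}^{2}+R(y,r,\partial_{y})+\text{l.o.t.}$ with $R$ a second-order tangential operator of real principal type, so $P_{\varphi}$ is a second-order semiclassical differential operator in $(s,y,r)$ whose principal symbol is $p_{\varphi}(s,y,r,\sigma,\eta,\rho) = \sigma^{2}+\rho^{2}+R(y,r,\eta)-|\nabla\varphi|^{2}+2i\nabla\varphi\cdot(\sigma,\eta,\rho)$.

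The next step is to split $P_{\varphi}=Q_{2}+iQ_{1}$ into its formally self-adjoint and skew-adjoint parts, and to expand
\[
\|P_{\varphi}v\|_{L^{2}(\mathcal{V})}^{2} = \|Q_{2}v\|^{2}+\|Q_{1}v\|^{2}+\bigl(\tfrac{i}{h}[Q_{2},Q_{1}]v,v\bigr)_{L^{2}(\mathcal{V})}h + \mathcal{B}(v),
\]
where $\mathcal{B}(v)$ collects all the boundary terms produced by integration by parts in $r$ (the $s$- and $y$-boundaries are avoided thanks to the compact support of $v$). Hörmander's subellipticity condition asserts that $\tfrac{1}{2i}\{\overline{p_{\varphi}},p_{\varphi}\}$ is bounded below by a positive constant on the characteristic set $\{p_{\varphi}=0\}$; a standard Gårding-type argument (or, equivalently, a partition of unity on the cotangent space into an elliptic region, where $|p_{\varphi}|$ controls the symbol, and a neighbourhood of the characteristic set, where the commutator is positive) then yields the interior bound $h\|v\|^{2}+h^{3}\|\nabla v\|^{2}\lesssim h^{4}\|P_{\varphi}v\|^{2}+|\mathcal{B}(v)|$.

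The main difficulty, and the step I would treat most carefully, is the analysis of $\mathcal{B}(v)$. The integration by parts in $r$ produces quadratic boundary contributions at $\{r=0\}$ of three types: an order-zero term proportional to $h|v|^{2}$, a tangential first-order term proportional to $h^{3}|\nabla_{y}v|^{2}$, a normal first-order term proportional to $h^{3}|\partial_{r}v|^{2}$, together with indefinite cross-terms of the form $h^{2}\partial_{r}\varphi\cdot|v|^{2}$ coming from the symmetrization of $P_{\varphi}$. Rewriting $v$ in terms of $u$ at $r=0$ (which costs only $e^{\varphi(s,y,0)/h}$ times the same quantities for $u$, up to controlled lower-order errors), one obtains exactly the three boundary integrals appearing in the general estimate. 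For the improved estimate, the hypothesis $\partial_{r}\varphi>0$ means that the coefficient of the normal first-order boundary term has a definite sign: the contribution $-2h^{3}\partial_{r}\varphi(s,y,0)|\partial_{r}v|^{2}$ appears with the correct sign to be dropped from the right-hand side (equivalently, absorbed after Young's inequality against the positive commutator and the square of $Q_{2}v$), leaving only the $u$- and $\nabla_{y}u$-boundary terms.

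Finally, reverting the substitution $v=e^{\varphi/h}u$, and using that $|\nabla v|^{2}$ dominates $e^{2\varphi/h}(|\nabla u|^{2}+h^{-2}|u|^{2})$ up to an absorbable error for small $h$, yields the stated inequality for $h\in(0,h_{1})$ with $h_{1}$ chosen small enough to swallow lower-order terms. The two delicate points I expect to require the most care are: (i) verifying that the Gårding-type lower bound for the commutator survives in the presence of the tangential operator $R(y,r,\eta)$, which is handled by a semiclassical pseudodifferential calculus in the tangential variables with $r$ as a parameter; and (ii) getting the sign of the $|\partial_{r}u(s,y,0)|^{2}$ boundary term right in the improved case, since this is the only structural difference between the two estimates and it relies crucially on the monotonicity assumption $\partial_{r}\varphi>0$.
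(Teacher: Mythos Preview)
The paper does not give a proof of this theorem at all: it is stated as a known result and simply cites \cite[Theorem A.5]{Lebeau-Felipe}. Your proposal, by contrast, sketches the standard semiclassical proof (conjugation $v=e^{\varphi/h}u$, decomposition $P_{\varphi}=Q_{2}+iQ_{1}$, commutator lower bound from H\"ormander's condition via a G\aa rding-type argument, and bookkeeping of the boundary terms at $\{r=0\}$). This is indeed the route taken in the cited reference and in the broader literature (e.g.\ \cite{Rousseau-Lebeau}), so your outline is on the right track.

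Two small caveats on the details. First, your symbol formula is written for the flat Laplacian; in the geodesic coordinates the tangential block is $R(y,r,\eta+i\nabla_{y}\varphi)$ rather than $R(y,r,\eta)$ shifted only in the real part, and the first-order coefficients of $\Delta_{y,r}$ contribute lower-order terms you must absorb. Second, the sign discussion for the improved estimate is slightly more delicate than ``drop the term'': the boundary quadratic form coming from $2\mathrm{Re}(Q_{2}v,iQ_{1}v)$ factorizes (after a tangential symbol argument) so that, when $\partial_{r}\varphi>0$, it is nonnegative modulo terms controlled by $|v(s,y,0)|$ and $|\nabla_{y}v(s,y,0)|$; one does not simply discard $-2h^{3}\partial_{r}\varphi|\partial_{r}v|^{2}$ but rather shows that the full boundary form involving $\partial_{r}v$ has a favorable sign. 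These are exactly the ``delicate points'' you flagged, and in a full write-up they require the microlocal factorization of the boundary operator carried out in \cite[Appendix A]{Lebeau-Felipe}.
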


To apply Theorem \eqref{carleman.estimate} in order to handle case (3) above, we need  an appropriate weight satisfying Hörmander’s subellipticity condition near the boundary $\partial\Omega$. Let us  now construct such a weight function.

For $s_{0} \in \left[\tfrac{1}{4},\tfrac{3}{4}\right]$, consider the smooth function $ \psi : [0,1]\times\mathcal{K}_{r_{0}} \to \mathbb{R}$ given by 
\begin{equation}\label{weigh-function-psi(s,x)}
	\psi(s,x) = r(x) - (s - s_{0})^{2}.
\end{equation}
For a constant $D > 0$ large enough, we consider the weight function $\varphi : [0,1]\times\mathcal{K}_{r_{0}} \to \mathbb{R}$  defined by 
    \begin{equation}\label{weigh-function-varphi(s,x)}
    	\varphi(s,x) = e^{D\psi(s,x)}.
    \end{equation}

\begin{obs}
The weight function  $\psi$, defined in \eqref{weigh-function-psi(s,x)}, depends only on the distance from $x$ to the boundary $\partial\Omega$. Hence, using the coordinates $(s,r,y)$, the function $\psi$ becomes independent of $y$, and we may write $\psi(s,x) =\psi(s,r) $. Moreover, since $|s - s_{0}| \leq \tfrac{3}{4}$ for all $s \in [0,1]$, we have
	\begin{equation*}
		\min_{(s,r)\in [0,1]\times[0,r_0]} \psi(s,r)\geq -\frac{9}{16} \quad \hbox{and}\quad \varphi(s,r)\geq e^{-\frac{9}{16}D}.
	\end{equation*}
\end{obs}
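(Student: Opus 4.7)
The plan is to verify the two elementary assertions about the weight function $\psi$ defined in \eqref{weigh-function-psi(s,x)} and the associated $\varphi$ defined in \eqref{weigh-function-varphi(s,x)}. The first assertion concerns the independence of $\psi$ from the tangential variable $y$ in normal geodesic coordinates, while the second asserts a uniform lower bound on $\psi$ and on $\varphi$ throughout $[0,1]\times[0,r_0]$. Both reduce to a direct computation based on the explicit formula $\psi(s,x)=r(x)-(s-s_0)^2$, so the plan is to make these computations explicit without any auxiliary machinery.

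For the coordinate independence, I would recall the construction of the normal geodesic coordinates $(y,r)$ near $\partial\Omega$ introduced just before the remark: every $x\in\mathcal{K}_{r_0}$ is uniquely represented as $(y(x),r(x))$ where $r(x)=\mathrm{dist}(x,\partial\Omega)$. Since the definition of $\psi$ only couples $x$ through $r(x)$, substituting these coordinates yields $\psi(s,x)=r-(s-s_0)^2$, an expression in $(s,r)$ alone. I would then simply write $\psi(s,r):=r-(s-s_0)^2$ to justify the abuse of notation $\psi(s,x)=\psi(s,r)$.

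For the quantitative lower bound, the key observation is that the hypothesis $s_0\in[\tfrac14,\tfrac34]$ and $s\in[0,1]$ imply $s-s_0\in[-\tfrac34,\tfrac34]$, so $|s-s_0|\leq \tfrac34$ and consequently $(s-s_0)^2\leq \tfrac{9}{16}$. Combining this with the trivial bound $r\geq 0$ on $[0,r_0]$ gives
\begin{equation*}
\psi(s,r)=r-(s-s_0)^2\geq -\tfrac{9}{16}\qquad\forall\,(s,r)\in[0,1]\times[0,r_0],
\end{equation*}
which is the first displayed inequality. The second then follows at once from the monotonicity of $t\mapsto e^{Dt}$, valid because $D>0$, yielding $\varphi(s,r)=e^{D\psi(s,r)}\geq e^{-\frac{9}{16}D}$ on the same set.

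No step should present any real obstacle: the only subtle point is verifying that the sup of $|s-s_0|$ over $(s,s_0)\in[0,1]\times[\tfrac14,\tfrac34]$ is exactly $\tfrac34$, attained at the extreme configurations $(s,s_0)=(0,\tfrac34)$ or $(1,\tfrac14)$. This gives the sharp constant $\tfrac{9}{16}$ appearing in the statement, and thus the remark is fully justified.
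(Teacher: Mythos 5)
Your verification is correct and coincides with the (implicit) argument in the paper: the remark is an elementary observation following directly from the explicit formula $\psi(s,x)=r(x)-(s-s_0)^2$, the hypothesis $s_0\in[\tfrac14,\tfrac34]$, the bound $r\geq 0$, and the monotonicity of the exponential. Nothing more is needed.
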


Note that the interpolation inequality in \eqref{interpolation-inequality} is stated for solutions $\widehat{v}_{\Lambda}$ of the system \eqref{elptc-system-2}, where the operator $\mathrm{P}$ is given by
	$$
		\mathrm{P} := -\partial_{ss}^{2} - \Delta_{x}, \quad (s,x) \in (0,1)\times\Omega.
	$$
However, the Carleman estimates in Theorem \ref{carleman.estimate} are local and formulated in a  geodesic normal coordinate system. Therefore, it is necessary to rewrite the operator $\mathrm{P}$ accordingly. In such coordinates, the Laplace operator $-\Delta_{x}$ takes the explicit form
	\begin{equation*}
		-\Delta_{y,r} =  -\partial_{rr}^{2} - R(y,r,\partial_{y}) + A_{1}(x,\partial_{x}),
	\end{equation*}
where $A_{1}$ is a first-order differential operator, and $R$ is a second-order tangential operator of the form
	\begin{equation*}
		R(y,r,\partial_{y}) = \sum_{i,j = 1}^{n-1}a_{ij}(y,r)\partial_{y_{i}}\partial_{y_{j}}.
	\end{equation*}

Since first-order terms do not affect the validity of the Carleman estimate in Theorem \ref{carleman.estimate}, we may, without loss of generality, work with the  operator
	\begin{equation}\label{differential-operator-new-coordinates}
		\mathrm{P} := -\partial_{ss}^{2} -\partial_{rr}^{2} - R(y,r,\partial_{y}).
	\end{equation}
    
In the new coordinate system, the following result shows that, for sufficiently large $D > 0$, the function $\varphi$ defined in \eqref{weigh-function-varphi(s,x)} and the operator $\mathrm{P}$ given in \eqref{differential-operator-new-coordinates} satisfy Hörmander’s subellipticity condition.
\begin{proposition}\label{Prop:pair-Hormander-condition}
Let $\varphi$ be given by \eqref{weigh-function-varphi(s,x)}, and let $\mathrm{P}$ be the differential operator \eqref{differential-operator-new-coordinates}. Then, the pair $(\varphi,\mathrm{P})$ satisfies \hyperref[hormander-condition]{Hörmander's subellipticity condition} in $[0,1]\times\mathcal{K}_{r_{0}}$.
\end{proposition}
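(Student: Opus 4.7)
The plan is to verify the two conditions in Definition \ref{hormander-condition}. Condition (i) is immediate: since $\psi(s,x) = r(x) - (s-s_0)^2$ depends on $x$ only through the distance $r(x)$ to the boundary, one has $\partial_r\psi \equiv 1$, hence $\partial_r\varphi = De^{D\psi} \geq De^{-9D/16}$ uniformly on $[0,1]\times\mathcal{K}_{r_0}$, and therefore $|\nabla\varphi| \geq \partial_r\varphi \geq C_0 > 0$.

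For condition (ii), I would first compute the principal symbol of the conjugated operator. Writing $\xi = (\eta,\rho)$ in the boundary-normal coordinates, with $\eta$ dual to $y$ and $\rho$ dual to $r$, the principal symbol of $\mathrm{P}$ is $p = \sigma^2 + \rho^2 + r(y,r,\eta)$, where $r(y,r,\eta) := \sum_{i,j} a_{ij}(y,r)\eta_i\eta_j$ is the principal symbol of $-R$ (a positive definite quadratic form in $\eta$, coming from the induced Laplace--Beltrami operator on hypersurfaces parallel to $\partial\Omega$). Applying the semiclassical conjugation rule $\sigma \mapsto \sigma + i\partial_s\varphi$, $\xi \mapsto \xi + i\nabla_x\varphi$, and using the crucial fact that $\partial_y\varphi \equiv 0$ (since $\psi$ depends only on $(s,r)$), I obtain $p_\varphi = q_2 + iq_1$ with
\begin{equation*}
    q_2 = \sigma^2 + \rho^2 + r(y,r,\eta) - (\partial_s\varphi)^2 - (\partial_r\varphi)^2, \qquad q_1 = 2\sigma\,\partial_s\varphi + 2\rho\,\partial_r\varphi,
\end{equation*}
and an elementary computation gives $\tfrac{1}{2i}\{\bar p_\varphi,p_\varphi\} = \{q_2,q_1\}$.

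Next, I would unfold $\{q_2,q_1\}$ using the chain-rule identities $\partial_j\varphi = D\varphi\,\partial_j\psi$ and $\partial_{jk}^2\varphi = D^2\varphi\,\partial_j\psi\,\partial_k\psi + D\varphi\,\partial_{jk}^2\psi$, together with the explicit values $\partial_s\psi = -2\tau$, $\partial_r\psi = 1$, $\partial_{ss}^2\psi = -2$, $\partial_{rs}^2\psi = \partial_{rr}^2\psi = 0$, where $\tau := s - s_0$. After regrouping, I expect the bracket to take the form
\begin{equation*}
    \{q_2,q_1\} = 4D^2\varphi\,(2\sigma\tau - \rho)^2 + 4D^4\varphi^3(1+4\tau^2)^2 - 8D\varphi\,\sigma^2 - 32D^3\varphi^3\tau^2 - 2D\varphi\,\partial_r r(y,r,\eta).
\end{equation*}
The appearance of the perfect square is the key algebraic gain: the constraint $q_1 = 0$ reads $\rho = 2\tau\sigma$ and therefore annihilates it.

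The concluding step is to restrict to the characteristic set $\{q_1 = q_2 = 0\}$. Inserting $\rho = 2\tau\sigma$ into $q_2 = 0$ yields $\sigma^2 = D^2\varphi^2 - r/(1+4\tau^2)$ together with the a priori bound $0 \leq r(y,r,\eta) \leq D^2\varphi^2(1+4\tau^2)$. Substituting and simplifying leads to
\begin{equation*}
    \{q_2,q_1\}\big|_{\{q_1=q_2=0\}} = 4D^3\varphi^3(1+4\tau^2)\bigl[D(1+4\tau^2) - 2\bigr] + \frac{8D\varphi\,r}{1+4\tau^2} - 2D\varphi\,\partial_r r.
\end{equation*}
The main obstacle is controlling the correction $-2D\varphi\,\partial_r r$; this is where the $y$-dependence and $r$-dependence of the metric coefficients $a_{ij}$ enter. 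Since the $a_{ij}$ are smooth, $|\partial_r r(y,r,\eta)| \leq C|\eta|^2$, and combining the uniform ellipticity $r(y,r,\eta) \geq c_0|\eta|^2$ with the characteristic bound $r \leq D^2\varphi^2(1+4\tau^2)$ gives $|\partial_r r| \leq C'D^2\varphi^2(1+4\tau^2)$, so the correction is of order $D^3\varphi^3(1+4\tau^2)$. Since $\varphi \geq e^{-9D/16} > 0$ and the positive leading term scales like $D^4\varphi^3(1+4\tau^2)^2$, choosing $D$ sufficiently large makes the leading term dominate all corrections uniformly over the compact set $[0,1]\times\mathcal{K}_{r_0}$, yielding $\{q_2,q_1\} \geq C_1 > 0$ on the characteristic set and completing the verification of Hörmander's subellipticity condition.
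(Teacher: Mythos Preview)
Your proposal is correct and follows the same approach as the paper. The paper simply asserts that the proposition follows from two straightforward claims---that $\partial_r\varphi>0$ on $[0,1]\times\mathcal{K}_{r_0}$ and that the bracket condition holds for $D\gg1$---without writing out the computation; you carry out exactly this verification in detail, and your formula for $\{q_2,q_1\}$ and the subsequent analysis on the characteristic set are accurate.
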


The proof of Proposition \ref{Prop:pair-Hormander-condition} follows from the  two claims, whose proofs are straightforward.
\begin{claim}
The function $ \varphi $, defined in \eqref{weigh-function-varphi(s,x)}, satisfies
	\begin{equation*}
		\partial_{r}\varphi(s,x) > 0  \quad \mbox{in}\ [0,1]\times\mathcal{K}_{r_{0}}.
	\end{equation*}
\end{claim}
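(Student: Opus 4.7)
The plan is to reduce the claim to a direct chain-rule computation, exploiting the very special form of the weight. Recall that in the geodesic normal coordinates $(y,r)$ near $\partial\Omega$, the function $\psi(s,x)=r(x)-(s-s_0)^2$ depends on $x$ only through the distance $r$, so we may write $\psi(s,r)=r-(s-s_0)^2$, and differentiation in $r$ makes perfect sense on $[0,1]\times\mathcal{K}_{r_0}$.

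First I would compute $\partial_r\psi$ directly from the definition: since the term $-(s-s_0)^2$ is independent of $r$ and $\partial_r r=1$, we have $\partial_r\psi(s,r)=1$ on the whole set $[0,1]\times\mathcal{K}_{r_0}$. Then, applying the chain rule to $\varphi=e^{D\psi}$ yields
\begin{equation*}
\partial_r\varphi(s,x)\;=\;D\,e^{D\psi(s,x)}\,\partial_r\psi(s,x)\;=\;D\,e^{D\psi(s,x)}.
\end{equation*}
Since $D>0$ is a fixed positive constant and the exponential is strictly positive, the conclusion $\partial_r\varphi(s,x)>0$ on $[0,1]\times\mathcal{K}_{r_0}$ is immediate.

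For a little extra sharpness one could also record a uniform lower bound on the compact set: as already observed in the remark following \eqref{weigh-function-varphi(s,x)}, we have $\psi(s,r)\geq -\tfrac{9}{16}$ on $[0,1]\times[0,r_0]$, and hence
\begin{equation*}
\partial_r\varphi(s,x)\;\geq\;D\,e^{-\frac{9}{16}D}\;>\;0\qquad\text{on } [0,1]\times\mathcal{K}_{r_0}.
\end{equation*}
There is no real obstacle here; the only thing to be slightly careful about is that the computation is being performed in the local coordinates $(s,y,r)$ in which $\psi$ is independent of $y$, which is precisely the setting in which the Carleman estimate of Theorem \ref{carleman.estimate} is to be applied.
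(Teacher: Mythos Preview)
Your proof is correct and is exactly the direct chain-rule computation that the paper has in mind; indeed, the paper declares this claim ``straightforward'' and omits the proof entirely. The additional uniform lower bound you record is a nice touch but not needed for the claim as stated.
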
	

\begin{claim}
Let $D \gg 1$. Then, for every
	\begin{equation*}
		(s,y,r,\sigma,\eta,\varrho) \in (0,1)\times\partial\Omega \times[0,r_{0}] \times \mathbb{R}^{n+1},
	\end{equation*} 
where $(\sigma, \eta, \varrho) \in \mathbb{R}^{n+1}$ are the Fourier dual variables corresponding to $(s,y,r)$, the following holds: there exists a constant $C>0$ such that
	$$
		p_{\varphi}(s,y,r,\sigma,\eta,\varrho) = 0  \quad \implies \quad \frac{1}{2i}\left\{\overline{p}_{\varphi},p_{\varphi}\right\}(s,y,r,\sigma,\eta,\varrho) \geq C.
	$$
\end{claim}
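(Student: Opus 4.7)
The plan is to compute the semiclassical principal symbol $p_\varphi$ of the conjugated operator $\mathrm{P}_\varphi$ explicitly, split it into real and imaginary parts $\alpha + i\beta$, and then exhibit a positive lower bound for the Poisson bracket $\frac{1}{2i}\{\overline{p_\varphi},p_\varphi\} = \{\alpha,\beta\}$ on the characteristic set $\{\alpha = \beta = 0\}$, uniformly over the compact set $[0,1]\times\mathcal{K}_{r_0}$, by choosing the parameter $D$ in \eqref{weigh-function-varphi(s,x)} large enough.

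First, the semiclassical principal symbol of $h^2\mathrm{P}$ is $p(s,y,r,\sigma,\eta,\varrho) = \sigma^2 + \varrho^2 + R(y,r,\eta)$, with $R(y,r,\eta)=\sum a_{ij}(y,r)\eta_i\eta_j$ positive definite in $\eta$. Since $\psi(s,r) = r - (s-s_0)^2$ is independent of $y$, so is $\varphi = e^{D\psi}$, hence $\partial_y\varphi = 0$, and the standard rule $\xi \mapsto \xi + i\nabla\varphi$ gives
$$p_\varphi = (\sigma + i\varphi_s)^2 + (\varrho + i\varphi_r)^2 + R(y,r,\eta),$$
with real and imaginary parts
$$\alpha = \sigma^2 + \varrho^2 + R - \varphi_s^2 - \varphi_r^2, \qquad \beta = 2(\sigma\varphi_s + \varrho\varphi_r).$$
Using $\psi_s = -2(s-s_0)$, $\psi_r = 1$, $\psi_{ss} = -2$, $\psi_{sr} = \psi_{rr} = 0$, and $\varphi_{ij} = D^2\psi_i\psi_j\varphi + D\psi_{ij}\varphi$, a direct computation (the $y,\eta$-derivatives contribute nothing because $\beta$ depends on neither) yields
$$\{\alpha,\beta\} = 4\bigl(\varphi_{ss}\sigma^2 + 2\varphi_{sr}\sigma\varrho + \varphi_{rr}\varrho^2\bigr) + 4\bigl(\varphi_{ss}\varphi_s^2 + 2\varphi_{sr}\varphi_s\varphi_r + \varphi_{rr}\varphi_r^2\bigr) - 2\varphi_r\,\partial_r R.$$

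Next, evaluate on the characteristic set. The constraint $\beta = 0$ reads $\varrho = 2(s-s_0)\sigma$, and substituting into the first square bracket reduces it to $-2D\varphi\sigma^2$; the second, with $(\varphi_s,\varphi_r) = D\varphi\nabla\psi$, equals $D^3\varphi^3\bigl(D|\nabla\psi|^4 - 2\psi_s^2\bigr)$, whose leading term is the positive quantity $D^4\varphi^3|\nabla\psi|^4$. The constraint $\alpha = 0$ gives $\sigma^2 + \varrho^2 + R = D^2\varphi^2|\nabla\psi|^2$, so $\sigma^2 \leq D^2\varphi^2|\nabla\psi|^2$ and, by positive-definiteness of $R$ in $\eta$, $|\partial_r R| \leq C_1 R \leq C_1 D^2\varphi^2|\nabla\psi|^2$. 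Since $|\nabla\psi|^2 = 1 + 4(s-s_0)^2 \geq 1$, collecting terms yields
$$\{\alpha,\beta\} \geq 4D^3\varphi^3|\nabla\psi|^2\bigl(D|\nabla\psi|^2 - 4 - C_1/2\bigr) \geq 4D^3\varphi^3,$$
provided $D$ is chosen larger than $4 + C_1/2$. Because $\varphi \geq e^{-9D/16}$ on the compact set $[0,1]\times\mathcal{K}_{r_0}$, the right-hand side is bounded below by a strictly positive constant $C$, proving the claim.

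The main, and essentially only, subtlety is the rank-deficient structure of the Hessian $(\varphi_{ij})$: its leading $D^2$-part is the rank-one matrix $D^2\varphi\,(\nabla\psi)(\nabla\psi)^T$, which annihilates vectors orthogonal to $\nabla\psi$. The constraint $\beta = 0$ forces $(\sigma,\varrho)$ into precisely this degenerate direction, so positivity of $\{\alpha,\beta\}$ must be extracted from the companion quadratic form evaluated at $(\varphi_s,\varphi_r)$, which points along $\nabla\psi$ and therefore sees the full $D^4\varphi^3|\nabla\psi|^4$ contribution. This is the classical Hörmander--Lebeau--Robbiano mechanism by which a weight with non-vanishing gradient (here $|\nabla\psi| \geq 1$, thanks to $\psi_r \equiv 1$) is turned into a subelliptic weight through the exponential transform $\varphi = e^{D\psi}$ with $D$ sufficiently large.
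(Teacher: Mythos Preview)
Your proof is correct and is precisely the standard computation that the paper has in mind; the paper itself omits the proof, labeling both claims ``straightforward.'' The only (harmless) imprecision is the final line: to obtain $\{\alpha,\beta\}\geq 4D^3\varphi^3$ you actually need $D\geq 5+C_1/2$ rather than $D>4+C_1/2$, but since any positive lower bound suffices this does not affect the argument.
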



\section{Proof of the Interpolation Inequality}\label{sec:proof-interp-ineq}
This section is devoted to the proof of Theorem \ref{thrm.int.inq}.  Our approach is inspired by the method developed in \cite{Lebeau-Felipe}. As discussed in Section \ref{sec:car_boundary}, the analysis is localized  near the boundary, namely in a neighborhood of $(s_{0},x_{0}) \in (0,1)\times\partial\Omega$. Throughout, we employ the notations and constructions introduced in Section \ref{sec:car_boundary}. Moreover, the constant $C > 0$ may vary from line to line. Functions whose last component is identically zero are denoted with a hat symbol.

\begin{proof}[\textbf{Proof of Theorem \ref{thrm.int.inq}}]
Let $r_{0}$ and $s_{0}$ be as defined in Section \ref{sec:car_boundary}, and let $\mathcal{W}_{r_{0}}$ as defined in \eqref{def-Wr}. For a sufficiently small parameter $s_{*} > 0$, consider the interval 
	\begin{equation*}
		[s_{0}-2s_{*},s_{0}+2s_{*}] \subset \left(\tfrac{1}{9}, \tfrac{9}{10}\right).
	\end{equation*}
	
Let $ \widehat{v}_{\Lambda} $ be the solution of system \eqref{elptc-system-2}, defined by \eqref{def:widehat(v)_Lambda}, which we localize near the boundary $\partial\Omega$ as follows. Consider a cut-off function of the form $ \chi(s,r) = \chi_{0}(s)\chi_{1}(r)$, where
	\begin{equation*} 
		\chi_{0} : (s_{0}-2s_{*},s_{0}+2s_{*}) \to [0,1] \quad \mbox{and} \quad	\chi_{1} : [0,r_{0}) \to [0,1],
	\end{equation*}
satisfying the properties
	\begin{equation}\label{Def:characteristic-functions}
		\left|\begin{array}{lllcc}
			\chi_{0}(s) = 1 \quad \text{for}\ |s-s_{0}|\ \leqslant \tfrac{3}{2}s_{*};\\
			\noalign{\smallskip}\displaystyle
			\chi_{0}\in \mathcal{C}_{c}^{\infty}\bigl((s_{0} - 2s_{*}, s_{0} + 2s_{*})\bigr),			
		\end{array}
		\right.
		\qquad \mbox{and} \qquad
		\left|\begin{array}{lllcc}
			\chi_{1}(r) = 1 \quad \text{for} \ r \in \left[0, \tfrac{r_{0}}{2}\right];\\
			\noalign{\smallskip}\displaystyle
			\chi_{1}\in \mathcal{C}_{c}^{\infty}\bigl([0,r_{0})\bigr),
		\end{array}
		\right.
	\end{equation} 	
where $	\supp(\chi_{1}) \subset [0,r_{0} - \varepsilon]$, for some small $ \varepsilon > 0 $. We then work with the localized function $\chi \widehat{v}_{\Lambda}$.

Next, to apply the semiclassical Carleman estimate from Theorem \ref{carleman.estimate}, we set
	$$ 
		\mathcal{V} := (0,1) \times \partial\Omega \times (0,r_0).
	$$ 
By Proposition \ref{Prop:pair-Hormander-condition}, the pair $(\varphi,\mathrm{P})$, where $\varphi$ is defined in \eqref{weigh-function-varphi(s,x)} and $\mathrm{P}$ is the operator given in \eqref{differential-operator-new-coordinates}, satisfies \hyperref[hormander-condition]{Hörmander's subellipticity condition} in $\overline{\mathcal{V}} = [0,1]\times \mathcal{K}_{r_{0}}$. Thus, we may apply Theorem \ref{carleman.estimate} to the function $\chi\widehat{v}_\Lambda$, which yields
	\begin{equation}\label{carleman-to-chi-v}
		\begin{split}
			&h\left\|e^{\tfrac{\varphi}{h}}\chi\widehat{v}_{\Lambda} \right\|_{L^{2}(\mathcal{Z})^{n-1}}^{2}\ + \ h^{3}\left\| e^{\tfrac{\varphi}{h}}\nabla_{s,y,r}(\chi\widehat{v}_{\Lambda}) \right\|_{L^{2}(\mathcal{Z})^{n-1}}^{2} \\
			&\leq  C\biggl( h^{4} \left\|e^{\frac{\varphi}{h}}\mathrm{P}(\chi\widehat{v}_{\Lambda}) \right\|_{L^{2}(\mathcal{Z})^{n-1}}^{2}+ h\left\| e^{\tfrac{\varphi}{h}}\chi \widehat{v}_{\Lambda}\right\|_{L^{2}((0,1)\times\partial\Omega)^{n-1}}^{2}\ +\ h^{3}\left\|e^{\tfrac{\varphi}{h}}\nabla_{s,y}(\chi \widehat{v}_{\Lambda})\right\|_{L^{2}((0,1)\times\partial\Omega)^{n-1}}^{2}\biggr),
		\end{split}
	\end{equation} 
for some constant $ C > 0 $ and all $ h \in (0,h_{1})$.

In order to rewrite \eqref{carleman-to-chi-v}, we set $\widehat{w} := e^{\tfrac{\varphi}{h}}\chi\widehat{v}_{\Lambda}$, that is
	\begin{equation*}
		\widehat{w}(s,y,r) = e^{\tfrac{\varphi(s,r)}{h}}\chi_{0}(s)\chi_{1}(r)\sum_{\mu_{j}\leqslant\Lambda}A_{j}(s)\Delta_{y,r}\widehat{e}_{j}(y,r),
	\end{equation*}
where  $A_{j}(s) := a_{j}\frac{\sinh(s\sqrt{\mu_{j}})}{\sqrt{\mu_{j}}}$ and $\widehat{e}_{j}(y,r) := (e_{j,1}(y,r),\ldots, e_{j,n-1}(y,r),0)$. 

We denote by $\widehat{w}_{0}$ the trace of $\widehat{w}$ on the boundary $\{r=0\}$, namely,
	\begin{equation*}
		\widehat{w}_{0}(s,y) := e^{\tfrac{\varphi_{0}(s)}{h}}\chi_{0}(s)\sum_{\mu_{j} \leqslant \Lambda}A_{j}(s)\Delta\widehat{e}_{j}\bigl|_{\partial\Omega}(y,0),
	\end{equation*}
where $\varphi_{0}(s) := \varphi(s,0)$.
	
Using \textit{semiclassical Sobolev norm of order one}, we see that  
	$$
		\|\widehat{w}\|_{H_{sc}^{1}(\mathcal{Z})^{n-1}}^{2} := \|\widehat{w}\|_{L^{2}(\mathcal{Z})^{n-1}}^{2} + \| h\nabla_{s,y,r}\widehat{w}\|_{L^{2}(\mathcal{Z})^{n-1}}^{2},
	$$
where $\nabla_{s,y,r} = (\partial_{s}, \partial_{y}, \partial_{r})$ denotes the gradient with respect to all variables in $\mathcal{Z}$. 

Similarly, the \textit{semiclassical tangential norm} is given by
	$$
		\|\widehat{w}_0\|_{H_{sc}^{1}(r=0)^{n-1}}^{2} := \|\widehat{w}_0\|_{L^{2}((0,1)\times\partial\Omega)^{n-1}}^{2} + \| h\nabla_{s,y}\widehat{w}_0\|_{L^{2}((0,1)\times\partial\Omega)^{n-1}}^{2}.
	$$

With these notations, estimate \eqref{carleman-to-chi-v} can be rewritten as
	\begin{equation}\label{carleman-with-w}
		\|\widehat{w}\|_{H_{sc}^{1}(\mathcal{Z})^{n-1}}^{2} \leq C \left(\|\widehat{w}_{0}\|_{H_{sc}^{1}(r=0)^{n-1}}^{2} + h^3\left\|e^{\tfrac{\varphi}{h}}\mathrm{P}(\chi \widehat{v}_{\Lambda})\right\|_{L^{2}(\mathcal{Z})^{n-1}}^{2}\right),
	\end{equation}
for some new constant $C > 0$.

The rest of the proof is devoted to estimating both sides of \eqref{carleman-with-w}. The argument is carried out in three steps. In the first step, we treat the boundary term $\|w_{0}\|_{H_{sc}^{1}(r=0)^{n}}^{2}$ by splitting it into low and high tangential frequencies and then estimating each part separately. In the second step, we estimate the term $\left\|e^{\tfrac{\varphi}{h}}\mathrm{P}(\chi \widehat{v}_{\Lambda})\right\|_{L^{2}(\mathcal{Z})^{n-1}}^{2}$. Finally, in the third step, we recover the full norm on the left-hand side, thereby completing the proof.

\null

\noindent \textbf{Step 1.} \textit{Estimate of the boundary term $\|\widehat{w}_{0}\|_{H_{sc}^{1}(r=0)^{n-1}}^{2}$}

To estimate the boundary term in the right-hand side of \eqref{carleman-with-w}, let $\Delta_{\partial\Omega}$ denote the Laplace operator on $\partial\Omega$ acting on vector fields, which admits a complete orthonormal system  $\{\zeta_{j}\}_{j\geq 1}$ in $L^{2}(\partial\Omega)$ consisting of eigenfunctions satisfying
	$$
		 -\Delta_{\partial\Omega}\zeta_{j} = \lambda_{j}^2\zeta_{j},
	$$
where $\{\lambda_{j}\}_{j\in \mathbb{N}}$ is the associated sequence of eigenvalues.

Let $\mathcal{B}_{y}$ denote the bounded operator acting on $L^{2}$-sections of the tangent bundle $T\partial\Omega$, given by
$$
\mathcal{B}_{y} := \phi\left(\sqrt{\mathrm{Id} - \Lambda^{-1}\Delta_{\partial\Omega}}\right),
$$
where $\phi \in \mathcal{C}^{\infty}(-2,2)$ is a smooth cut-off function such that $0 \leq \phi \leq 1$ and $\phi \equiv 1 $ on a neighborhood of $\left[-\sqrt{3},\sqrt{3}\right] $. 

For each $\displaystyle \sum_{j=1}^{\infty} b_j\zeta_{j} \in L^2(\partial \Omega)$, one has 
	\begin{equation*}
	\mathcal{B}_{y}\left( \sum_{j=1}^{\infty} b_j\zeta_{j} \right) = \sum_{j=1}^{\infty}\phi\left(\sqrt{1 + \Lambda^{-1}\lambda_{j}^{2}}\right)b_j\zeta_{j}
\end{equation*}	
and, since  $\widehat{w}_{0} \in L^{2}(\partial\Omega)^{n-1}$, we can apply $\mathcal{B}_y$ component-wise to decompose $\widehat{w}_{0}$ into its low and high tangential frequencies:
	\begin{equation}\label{w0-splitted}
		\begin{split}
			\widehat{w}_{0}(s,y) & =\ \mathcal{B}_{y}\widehat{w}_{0}(s,y) + (\mathrm{Id} - \mathcal{B}_{y})\widehat{w}_{0}(s,y)\\
				   & :=\ \widehat{w}_{1}(s,y) + \widehat{w}_{2}(s,y).
		\end{split}
	\end{equation}

Therefore, from \eqref{carleman-with-w}, we get
	\begin{equation}\label{carleman-with-w1-and-w2}
		\|\widehat{w}\|_{H_{sc}^{1}(\mathcal{Z})^{n-1}}^{2} \leqslant C \left(\|\widehat{w}_{1}\|_{H_{sc}^{1}(r=0)^{n-1}}^{2} + \|\widehat{w}_{2}\|_{H_{sc}^{1}(r=0)^{n-1}}^{2}  + h^{3}\left\| e^{\tfrac{\varphi}{h}}\mathrm{P}(\chi\widehat{v}_{\Lambda})\right\|_{L^{2}(\mathcal{Z})^{n-1}}^{2}\right).
	\end{equation}

\begin{obs}
As mentioned in \cite{Lebeau-Felipe}, the operator $\mathcal{B}_{y}$ is a semiclassical pseudodifferential operator of order $0$, with semiclassical parameter $\Lambda^{-\frac{1}{2}}$, acting only on the tangential variable $y\in\partial\Omega $. Its semiclassical principal symbol is given by 
	$$
		\sigma(\mathcal{B}_{y}) = \phi \left(\sqrt{1+|\eta|_{y}^{2}} \right)\mathrm{Id},
	$$
where $ |\eta|_{y}^{2} $ denotes the squared Riemannian length of the covector $\eta \in T_{y}^{*}\partial\Omega$. By definition of the essential support of $\mathcal{B}_{y}$, we have that  $w_{1}$ is microlocally supported in the region $|\eta|_{y} \leq \sqrt{3}$ and  $w_{2}$ is microlocally supported in  $|\eta|_{y} > \sqrt{2}$. 
\end{obs}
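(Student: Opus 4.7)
My plan is to read the observation as a direct application of the semiclassical functional calculus for elliptic self-adjoint operators, applied to the (Hodge/connection) Laplacian on the compact Riemannian manifold $\partial\Omega$. I would first set $h_{0} := \Lambda^{-1/2}$ as the semiclassical parameter. In any local chart on $\partial\Omega$, working componentwise in a local orthonormal frame for $T\partial\Omega$, the operator $-\Delta_{\partial\Omega}$ is a second-order self-adjoint differential operator whose matrix principal symbol is scalar, equal to $|\eta|_{y}^{2}\mathrm{Id}$, with $|\eta|_{y}^{2}$ the squared length of $\eta \in T_{y}^{*}\partial\Omega$ for the dual metric. Consequently,
\begin{equation*}
Q \ :=\ \mathrm{Id} - \Lambda^{-1}\Delta_{\partial\Omega} \ =\ \mathrm{Id} - h_{0}^{2}\Delta_{\partial\Omega}
\end{equation*}
is an elliptic self-adjoint semiclassical pseudodifferential operator of order $2$, acting only in the tangential variable, with principal matrix symbol $q(y,\eta) = (1+|\eta|_{y}^{2})\mathrm{Id} \geq \mathrm{Id}$.

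Since $\mathrm{spec}(Q) \subset [1,+\infty)$, the operator $\sqrt{Q}$ is well-defined by spectral calculus and self-adjoint. I would then invoke the standard result that smooth compactly supported functional calculus of an elliptic self-adjoint semiclassical pseudodifferential operator returns again a pseudodifferential operator with the expected principal symbol. Concretely, via the Helffer--Sjöstrand formula applied to an almost-analytic extension of $\phi$, combined with the symbolic calculus of the resolvent $(Q-z)^{-1}$, I conclude that
\begin{equation*}
\mathcal{B}_{y} \ =\ \phi\bigl(\sqrt{Q}\bigr)
\end{equation*}
is a semiclassical pseudodifferential operator of order $0$, acting only in $y$, with principal symbol $\sigma(\mathcal{B}_{y})(y,\eta) = \phi\bigl(\sqrt{1+|\eta|_{y}^{2}}\bigr)\mathrm{Id}$. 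This establishes the first assertion of the remark.

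For the two microlocal support claims, I would use the general principle that the essential support of a pseudodifferential operator $A$ coincides with the closure of the support of its principal symbol, and that $\mathrm{WF}_{sc}(Au) \subset \overline{\mathrm{supp}(\sigma(A))}$ for every distribution $u$. Since $\mathrm{supp}(\phi) \subset (-2,2)$, the symbol of $\mathcal{B}_{y}$ vanishes on $\{\sqrt{1+|\eta|_{y}^{2}} \geq 2\} = \{|\eta|_{y} \geq \sqrt{3}\}$, so $\widehat{w}_{1} = \mathcal{B}_{y}\widehat{w}_{0}$ is microlocally supported in $\{|\eta|_{y} \leq \sqrt{3}\}$. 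Conversely, the hypothesis $\phi \equiv 1$ on a neighborhood $[-\sqrt{3}-\varepsilon,\sqrt{3}+\varepsilon]$ of $[-\sqrt{3},\sqrt{3}]$ implies that the principal symbol $1-\phi\bigl(\sqrt{1+|\eta|_{y}^{2}}\bigr)$ of $\mathrm{Id}-\mathcal{B}_{y}$ vanishes on $\{|\eta|_{y}^{2} \leq 2+\delta\}$ for some $\delta>0$, hence is supported in $\{|\eta|_{y} > \sqrt{2+\delta}\} \subset \{|\eta|_{y} > \sqrt{2}\}$, giving the statement for $\widehat{w}_{2}$.

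The only nontrivial ingredient in this plan is the functional calculus step, namely the assertion that smooth compactly supported functions of the elliptic self-adjoint semiclassical pseudodifferential operator $\sqrt{Q}$ remain pseudodifferential operators with principal symbol given by $\phi \circ \sqrt{q}$. This is standard and carried out in detail in the Martinez reference already cited in the paper. The passage from scalar to matrix principal symbol is automatic, because $-\Delta_{\partial\Omega}$ is scalar-principal in any local orthonormal frame, so every step of the symbolic calculus preserves the diagonal $\mathrm{Id}$ factor at the principal level.
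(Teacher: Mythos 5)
Your proposal is correct. The paper does not actually prove this remark: it only states it as a quotation of the cited reference, so there is no in-paper argument to compare against. Your plan — recognize $Q = \mathrm{Id} - h_{0}^{2}\Delta_{\partial\Omega}$ with $h_0 = \Lambda^{-1/2}$ as a tangential, elliptic, self-adjoint semiclassical operator of order two with principal symbol $(1+|\eta|_y^2)\mathrm{Id}$, invoke semiclassical functional calculus (Helffer–Sjöstrand plus resolvent symbol calculus) to get that $\phi(\sqrt{Q})$ is again a tangential $h_0$-pseudodifferential operator of order $0$ with principal symbol $\phi(\sqrt{1+|\eta|_y^2})\mathrm{Id}$, and then read off the essential supports from $\mathrm{supp}\,\phi \subset (-2,2)$ and from $\phi\equiv1$ on a neighborhood of $[-\sqrt{3},\sqrt{3}]$ — is precisely the standard argument that the cited reference relies on. Your elementary algebra for the thresholds ($\sqrt{1+|\eta|_y^2}\geq 2 \iff |\eta|_y\geq\sqrt{3}$; $\sqrt{1+|\eta|_y^2}\leq\sqrt{3} \iff |\eta|_y\leq\sqrt{2}$, with the slack from the neighborhood) is right.

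One small streamlining you may wish to make explicit: applying Helffer–Sjöstrand to $\phi$ as a function of $\sqrt{Q}$ tacitly presupposes that $\sqrt{Q}$ is itself a semiclassical $\Psi$DO, which requires its own (Seeley-type) justification. You can avoid this by noting that on $\mathrm{spec}(Q)\subset[1,\infty)$ one has $\phi(\sqrt{Q})=\tilde\phi(Q)$ with $\tilde\phi(t):=\phi(\sqrt{t})$, which extends to a function in $\mathcal{C}_c^\infty(\mathbb{R})$, and then apply functional calculus directly to the second-order operator $Q$. The resulting principal symbol is $\tilde\phi(1+|\eta|_y^2)\mathrm{Id}=\phi(\sqrt{1+|\eta|_y^2})\mathrm{Id}$, as you state, and the rest of your argument goes through unchanged.
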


We estimate $\|\widehat{w}_{2}\|_{H_{sc}^{1}(r=0)^{n-1}}^{2}$ using the following result, which, to the best of our knowledge, does not appear in the literature and is of independent interest.
\begin{proposition}\label{Prop:w2-estimate}
Let $w_{2}$ be defined by
	\begin{equation*}
		w_{2}(s,y) := (\mathrm{Id} - \mathcal{B}_{y})\left(e^{\tfrac{\varphi_{0}(s)}{h}}\chi_{0}(s)\sum_{\mu_{j} \leqslant \Lambda}A_{j}(s)\Delta e_{j}\bigl|_{\partial\Omega}(y,0)\right).
	\end{equation*}
Then there exist positive constants $\epsilon_{0}$, $\Lambda_{0}$ and $C_{0}$ and $\rho \in (0,1)$ such that,    for every $N \in \mathbb{N}$ and all $\Lambda \geq \Lambda_{0}$, it  holds:
	\begin{equation*}
		\begin{split}
			\|w_{2}\|_{H_{sc}^{1}(r=0)^{n}}^{2} \leq  C\Lambda^{-2N}\Bigl(\|\widehat{w}\|_{L^{2}(\mathcal{Z})^{n-1}}^{2} &\ + e^{\frac{2\left(\varphi_0(s_{0}) - \epsilon_{0}\right)}{h}}\|\widehat{v}_{\Lambda}\|_{L^{2}(\mathcal{Z})^{n-1}}^{2}\\ &\ + e^{\frac{2C_{0}}{h}}\left\|\widehat{v}_{\Lambda}\right\|_{H^{1}(\mathcal{Z})^{n-1}}^{2(1-\rho)}\left\|\partial_{s}\widehat{v}_{\Lambda}(0,\cdot) \right\|_{L^{2}(\omega)^{n-1}}^{2\rho}\Bigr),
		\end{split}
	\end{equation*}
for some constant $C>0$ not depending on $\Lambda$.
\end{proposition}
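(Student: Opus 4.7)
The plan is to exploit a microlocal disjointness: the semiclassical symbol of $\mathrm{Id}-\mathcal{B}_y$ vanishes in the tangential region $\{|\eta|_y\leq\sqrt{2}\}$ (with parameter $\Lambda^{-1/2}$), whereas the boundary data $\sum_{\mu_j\leq\Lambda}A_j(s)\Delta\widehat{e}_j|_{\partial\Omega}$ is semiclassically concentrated in $\{|\eta|_y\lesssim 1\}$, because only eigenfunctions of spatial frequency $\sqrt{\mu_j}\leq\sqrt{\Lambda}$ contribute. Thus $\mathrm{Id}-\mathcal{B}_y$ is semiclassically elliptic of order $-\infty$ on this data, and a standard elliptic estimate should yield the gain $\Lambda^{-2N}$ for every $N\in\mathbb{N}$. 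The three terms on the right-hand side of the statement then arise from splitting the weight $e^{2\varphi_0(s)/h}$ into a high-weight inner region and a low-weight outer region, and from invoking Proposition \ref{Prop-interpolation-Leb-Fel} to handle the interior interpolation.

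First, since $\mathcal{B}_y$ acts only in $y$, it commutes with multiplication by $e^{\varphi_0(s)/h}\chi_0(s)$, so
\[ w_2(s,y)=e^{\varphi_0(s)/h}\chi_0(s)\,(\mathrm{Id}-\mathcal{B}_y)\Bigl[\sum_{\mu_j\leq\Lambda}A_j(s)\,\Delta\widehat{e}_j|_{\partial\Omega}(y,0)\Bigr]. \]
Using the eigenvalue equation $-\Delta e_j+\nabla p_j=\mu_j e_j$ together with $e_j|_{\partial\Omega}=0$, the bracket can be rewritten in terms of boundary values of $\nabla p_j$ and normal derivatives of $e_j$. I would then insert the identity $(\mathrm{Id}-\Lambda^{-1}\Delta_{\partial\Omega})^{-N}(\mathrm{Id}-\Lambda^{-1}\Delta_{\partial\Omega})^{N}$ and factor $\mathrm{Id}-\mathcal{B}_y=\bigl[(\mathrm{Id}-\mathcal{B}_y)(\mathrm{Id}-\Lambda^{-1}\Delta_{\partial\Omega})^{-N}\bigr]\circ(\mathrm{Id}-\Lambda^{-1}\Delta_{\partial\Omega})^{N}$. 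The first factor is a tangential semiclassical operator of order $-N$, because its symbol $(1-\phi(\sqrt{1+|\eta|_y^2}))(1+|\eta|_y^2)^{-N}$ is controlled by $\langle\eta\rangle^{-2N}$. The second factor, when applied to the bracketed sum, has $L^2(\partial\Omega)^{n-1}$-norm bounded by the boundary trace of $\widehat{v}_\Lambda(s,\cdot)$ up to polynomial factors in $\Lambda$ absorbable into $C_N$ by taking $N$ slightly larger. Squaring, applying the trace theorem to replace the boundary norm by an $H^1$-norm on the tubular neighborhood $\mathcal{K}_{r_0}$, and integrating in $s$ against $e^{2\varphi_0(s)/h}\chi_0(s)^2$, the estimate reduces to bounding
\[ \Lambda^{-2N}\int_{s_0-2s_*}^{s_0+2s_*}e^{2\varphi_0(s)/h}\,\chi_0(s)^2\,\|\widehat{v}_\Lambda(s,\cdot)\|_{H^1(\mathcal{K}_{r_0})^{n-1}}^2\,ds. \]

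To produce the three terms in the statement, I pick a small $\eta>0$ and split the $s$-range into an outer region $\eta<|s-s_0|\leq 2s_*$ and an inner region $|s-s_0|\leq\eta$. Since $\varphi_0(s)=e^{-D(s-s_0)^2}$ attains a strict maximum at $s_0$, there exists $\epsilon_0>0$ with $\varphi_0(s)\leq\varphi_0(s_0)-\epsilon_0$ on the outer region, and the outer contribution is dominated by $e^{2(\varphi_0(s_0)-\epsilon_0)/h}\|\widehat{v}_\Lambda\|_{L^2(\mathcal{Z})^{n-1}}^2$, yielding the second term. On the inner region the weight is bounded by $e^{2C_0/h}$ with $C_0=\varphi_0(s_0)$, and the $H^1(\mathcal{K}_{r_0})$-norm of $\widehat{v}_\Lambda$ there is controlled via Proposition \ref{Prop-interpolation-Leb-Fel} applied component-wise in $\mathcal{W}_{r_0}$, since $(-\partial_{ss}^2-\Delta_x)\widehat{v}_\Lambda=0$ and $\widehat{v}_\Lambda(0,\cdot)=0$; this delivers the factor $\|\widehat{v}_\Lambda\|_{H^1(\mathcal{Z})^{n-1}}^{2(1-\rho)}\|\partial_s\widehat{v}_\Lambda(0,\cdot)\|_{L^2(\omega)^{n-1}}^{2\rho}$, i.e.\ the third term. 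Any leftover contribution in which the weight $e^{2\varphi_0/h}$ has already been absorbed is estimated by $\|\widehat{w}\|_{L^2(\mathcal{Z})^{n-1}}^2$, giving the first term. The main obstacle is the quantitative microlocal step: converting the heuristic ``eigenfunctions with $\mu_j\leq\Lambda$ have semiclassical tangential boundary frequency $\lesssim 1$'' into a bound uniform in the sequence $(a_j)$ and in $\Lambda$. This requires a careful analysis of the symbol of $(\mathrm{Id}-\mathcal{B}_y)(\mathrm{Id}-\Lambda^{-1}\Delta_{\partial\Omega})^{-N}$ combined with a Calder\'on--Vaillancourt-type $L^2$-boundedness whose constants do not degenerate as $\Lambda\to\infty$.
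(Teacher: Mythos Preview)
Your overall strategy—extract a $\Lambda^{-2N}$ microlocal gain from $(\mathrm{Id}-\mathcal{B}_y)$, then decompose to recover the three right-hand-side terms—is the paper's, but your decomposition step breaks. After the trace theorem you are left with $\int e^{2\varphi_0(s)/h}\chi_0(s)^2\|\widehat{v}_\Lambda(s,\cdot)\|_{H^1(\mathcal{K}_{r_0})}^2\,ds$, and on the inner region $|s-s_0|\leq\eta$ you invoke Proposition~\ref{Prop-interpolation-Leb-Fel} to control $\|\widehat{v}_\Lambda\|_{H^1(\mathcal{K}_{r_0})}$. But that proposition bounds $\|\widehat{v}_\Lambda\|_{H^1(\mathcal{W}_{r_0})}$, where $\mathcal{W}_{r_0}=(\tfrac19,\tfrac{9}{10})\times\{\mathrm{dist}(x,\partial\Omega)>\tfrac{r_0}{2}\}$ is the \emph{interior} set, whereas $\mathcal{K}_{r_0}=\{\mathrm{dist}(x,\partial\Omega)\leq r_0\}$ is the \emph{boundary strip}. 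Controlling $\widehat{v}_\Lambda$ near $\partial\Omega$ with weight $e^{2\varphi_0(s_0)/h}$ is exactly the endpoint of the interpolation inequality one is proving, so this step is circular. The paper splits in $r$, not in $s$: it uses the monotonicity $\varphi(s,r)\geq\varphi_0(s)$ for $r\geq0$ to dominate $e^{2\varphi_0(s)/h}\|\widehat{v}_\Lambda(s,\cdot)\|_{L^2(\Omega)}^2$ by $\int_\Omega e^{2\varphi(s,r)/h}|\widehat{v}_\Lambda|^2\,dx$; on $\{r<r_0/2\}$ this integrand is $|\widehat{w}|^2$ (since $\chi_1\equiv1$ there), yielding the first term directly, while on $\{r\geq r_0/2\}$ one is inside $\mathcal{W}_{r_0}$ and Proposition~\ref{Prop-interpolation-Leb-Fel} legitimately applies. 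The $\epsilon_0$-term does not come from an outer $s$-shell at all; it appears only in the separate estimate for $h\partial_s w_2$, from the annulus where an enlarged cutoff $\widetilde{\chi}_0$ differs from $\chi_0$.

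Two further issues. First, your factorization $(\mathrm{Id}-\mathcal{B}_y)=\bigl[(\mathrm{Id}-\mathcal{B}_y)(\mathrm{Id}-\Lambda^{-1}\Delta_{\partial\Omega})^{-N}\bigr]\circ(\mathrm{Id}-\Lambda^{-1}\Delta_{\partial\Omega})^{N}$ does not produce the $\Lambda^{-N}$ gain: the first bracket is a semiclassical operator of order $-2N$, hence $L^2$-bounded with $O(1)$ norm, not $O(\Lambda^{-N})$ norm. The gain genuinely requires showing that the boundary trace $\sum_{\mu_j\leq\Lambda}A_j\Delta e_j|_{\partial\Omega}$ has $O(\Lambda^{-\infty})$ mass in the tangential spectral region $\{\lambda_k^2\geq 2\Lambda\}$, which is a nontrivial statement relating interior Stokes frequencies to boundary-Laplacian frequencies; the paper imports this as \cite[Lemma~B.1]{Lebeau-Felipe}, obtaining directly $\|w_2(s,\cdot)\|^2+h^2\|\nabla_y w_2(s,\cdot)\|^2\leq C_N\Lambda^{-2N}\sum_{\mu_j\leq\Lambda}|A_j(s)|^2$. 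Second, the statement asks for the full $n$-component $w_2$, while your trace-theorem route via $\widehat{v}_\Lambda$ only reaches $n-1$ components (your first display already replaced $e_j$ by $\widehat{e}_j$). The bridge is the elliptic bound $\sum_{\mu_j\leq\Lambda}|A_j(s)|^2\leq C\|\widehat{v}_\Lambda(s,\cdot)\|_{L^2(\Omega)^{n-1}}^2$, obtained from $\partial_{x_n}u_{\Lambda,n}=-\sum_{k<n}\partial_{x_k}u_{\Lambda,k}$ and Poincar\'e, which the paper isolates as a lemma and which your sketch omits.
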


Notice that, in Proposition \ref{Prop:w2-estimate}, we estimate the tangential high-frequencies of all components of the solution at the boundary by a right-hand side which contain only terms involving $n-1$ components of the solution in the interior. We prove it in Appendix \ref{Appendix:Proof-of-Prop-3.1}.

Thus, from \eqref{carleman-with-w1-and-w2}, it follows that there exist positive constants $C, C_0, \epsilon_{0}$ and  $\Lambda_{0},$  and $\rho \in (0,1)$, such that
	\begin{equation*}
		\begin{split}
			\|\widehat{w}\|_{H_{sc}^{1}(\mathcal{Z})^{n-1}}^{2} \leq\ & C \Bigl(\|\widehat{w}_{1}\|_{H_{sc}^{1}(r=0)^{n-1}}^{2} + h^{3}\left\| e^{\tfrac{\varphi}{h}}\mathrm{P}(\chi\widehat{v}_{\Lambda})\right\|_{L^{2}(\mathcal{Z})^{n-1}}^{2}\Bigr) \\
			 & + C\Lambda^{-2N}\Bigl( e^{\frac{2\left(\varphi_0(s_{0}) - \epsilon_{0}\right)}{h}}\|\widehat{v}_{\Lambda}\|_{L^{2}(\mathcal{Z})^{n-1}}^{2} + e^{\frac{2C_{0}}{h}}\left\|\widehat{v}_{\Lambda}\right\|_{H^{1}(\mathcal{Z})^{n-1}}^{2(1-\rho)}\left\|\partial_{s}\widehat{v}_{\Lambda}(0,\cdot) \right\|_{L^{2}(\omega)^{n-1}}^{2\rho}\Bigr),
		\end{split}
	\end{equation*}
for all $N \in \mathbb{N}$ and $\Lambda \geq \Lambda_{0}$.

To estimate the term $\|\widehat{w}_{1}\|_{H_{sc}^{1}(r=0)^{n-1}}^{2}$, we introduce the semiclassical pseudodifferential operator $\mathfrak{B}_{s}$ acting on the $s$-variable, given by 
	\begin{equation}\label{pseudo-differential-operator-in-s-var}
		\mathfrak{B}_{s}u(s) = \frac{1}{2\pi h}\int e^{\tfrac{i(s-s')\sigma}{h}}b_{1}(\sigma)b_{0}(s' - s_{0})u(s') ds' d\sigma,
	\end{equation}
where $ b_{j} \in \mathcal{C}_{0}^{\infty}\bigl((-\alpha_{j}, \alpha_{j})\bigr) $ satisfy $ b_{j} \equiv 1 $ on $ [-\tfrac{\alpha_{j}}{2},\tfrac{\alpha_{j}}{2}] $, for some small constants $ \alpha_{j} > 0 $.

Using this operator, we decompose $\widehat{w}_{1}$ as
	\begin{equation}\label{w1-pos-h.pdiff}
		\|\widehat{w}_{1}\|_{H_{sc}^{1}(r=0)^{n-1}}^{2} \leq 2\|\mathfrak{B}_{s}\widehat{w}_{1}\|_{H_{sc}^{1}(r=0)^{n-1}}^{2} + 2\|(1-\mathfrak{B}_{s})\widehat{w}_{1}\|_{H_{sc}^{1}(r=0)^{n-1}}^{2}.
	\end{equation}	

To estimate the two terms on the right-hand side of \eqref{w1-pos-h.pdiff}, we use the following results. 
\begin{claim}\label{claim-2}
There exist constants $\alpha_{j} > 0$ and $\delta_{1} > 0$ such that for all $ \delta \in (0,\delta_{1}) $ and  every $N \in \mathbb{N}$, there exists $ C_{N} > 0 $ such that satisfying 
	\begin{equation*}
		\|\mathfrak{B}_{s}\widehat{w}_{1}\|_{H_{sc}^{1}(r=0)^{n-1}}^{2} \leq C_{N}h^{N}\|\widehat{w}\|_{L^{2}(\mathcal{Z})^{n-1}}^{2}.
	\end{equation*}
\end{claim}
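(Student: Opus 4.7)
My plan is to exploit the commutation between $\mathcal{B}_y$ and $\mathfrak{B}_s$ to reduce the problem to a scalar oscillatory integral in the $s$-variable, and then to obtain $O(h^N)$ decay by a semiclassical non-stationary phase / integration-by-parts argument.

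First, since $\mathcal{B}_y$ and $\mathfrak{B}_s$ act on disjoint sets of variables, they commute; as $\mathcal{B}_y$ is uniformly bounded on $L^2(\partial\Omega)^{n-1}$ and on the corresponding tangential semiclassical Sobolev spaces, it suffices to establish the stated estimate with $\widehat{w}_0$ in place of $\widehat{w}_1$. Next, decomposing $\sinh(s\sqrt{\mu_j}) = \tfrac{1}{2}(e^{s\sqrt{\mu_j}}-e^{-s\sqrt{\mu_j}})$ and writing $\tau_j := h\sqrt{\mu_j}\in[0,\delta]$ (which holds since $h=\delta/\sqrt{\Lambda}$ and $\mu_j\leq\Lambda$), the application of $\mathfrak{B}_s$ reduces, mode by mode, to the scalar oscillatory integral
\[
I_j^\pm(s) \;=\; \frac{1}{2\pi h}\int\!\!\int e^{\Phi_j^\pm(s,s',\sigma)/h}\, b_1(\sigma)\, b_0(s'-s_0)\, \chi_0(s')\, ds'\, d\sigma,
\]
with complex-valued phase $\Phi_j^\pm(s,s',\sigma) := i(s-s')\sigma + \varphi_0(s') \pm s'\tau_j$.

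Next, I would carry out a non-stationary phase argument in $s'$. Writing $\partial_{s'}\Phi_j^\pm = -i\sigma + \varphi_0'(s')\pm\tau_j$ and recalling $\varphi_0(s) = e^{-D(s-s_0)^2}$ with $\varphi_0'(s_0)=0$, the parameters are chosen hierarchically: first $\alpha_0$ small so that $|\varphi_0'(s')|\le 2D\alpha_0$ on $\mathrm{supp}\,b_0$, then $\delta_1$ small so that $\tau_j\le\delta_1$ is small relative to $\alpha_0$, and finally $\alpha_1$ small, so that $|\partial_{s'}\Phi_j^\pm|$ is uniformly bounded from below on the essential support of the amplitude $b_1(\sigma)b_0(s'-s_0)$. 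Iterated integration by parts using the identity $Le^{\Phi_j^\pm/h}=e^{\Phi_j^\pm/h}$ for $L:=\frac{h\partial_{s'}}{\partial_{s'}\Phi_j^\pm}$ then extracts a total factor of $h^N$; a Cauchy--Schwarz/Bessel summation over $\mu_j\leq\Lambda$, with the exponential weight absorbed into $\|\widehat{w}\|_{L^2(\mathcal Z)^{n-1}}^2$ (using that $\varphi>\varphi_0$ on $\mathrm{supp}\,\chi_1\cap\{r>0\}$), then closes the estimate.

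The main obstacle is that $\Phi_j^\pm$ has a genuine real critical point in $s'$ whenever $\sigma=0$ and $\varphi_0'(s')=\mp\tau_j$; for small $\tau_j$ this condition can fall inside $\mathrm{supp}\,b_0\times\mathrm{supp}\,b_1$, so a uniform lower bound on $|\partial_{s'}\Phi_j^\pm|$ is not available from the naive argument. The resolution is to introduce a smooth partition of unity in $(s',\sigma)$ separating a small neighborhood of these critical points from the rest: on the good set the integration-by-parts argument above yields $O(h^N)$; on the bad set one instead deforms the $s'$-contour into the complex plane, exploiting the Gaussian-type decay of $e^{\varphi_0(s')/h}$ coming from $\varphi_0''(s_0)=-2D<0$ to obtain an exponentially small remainder $O(e^{-c/h})$, which is itself $O(h^N)$ for every $N$.
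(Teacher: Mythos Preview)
Your approach has a genuine gap. The proposed fix for the ``bad set'' --- deforming the $s'$-contour into the complex plane --- does not yield the claimed $O(e^{-c/h})$ decay. Indeed, writing $s'=s_0+t+i\theta$ with $t,\theta\in\mathbb{R}$, one has $\varphi_0(s')\approx 1-D(t+i\theta)^2$, whose real part is $1-Dt^2+D\theta^2$; so moving $s'$ off the real axis \emph{increases} $\Re\Phi_j^\pm$ by $D\theta^2+O(\alpha_1|\theta|)$, which is positive for $\theta\neq0$ and $\alpha_1$ small. Thus no exponential gain is available from contour deformation. In fact, your $I_j^\pm$ has a genuine (complex) nondegenerate stationary point in $(s',\sigma)$ with Hessian of determinant $1$, so after the $1/(2\pi h)$ prefactor the contribution of the bad set is of size $O(1)\cdot e^{\max_{s'}[\varphi_0(s')\pm s'\tau_j]/h}$: no power of $h$ is gained, and a Laplace-method computation of $\int |I_j^\pm(s)|^2\,ds$ confirms this. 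A secondary issue is the final ``Bessel summation'' step: the boundary traces $\Delta\widehat{e}_j|_{\partial\Omega}$ are not orthogonal on $\partial\Omega$, so recombining the mode-wise bounds into $\|\widehat{w}\|_{L^2(\mathcal{Z})}^2$ is not straightforward.

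The argument that the paper invokes (that of \cite[Claim 1]{Lebeau-Felipe}) is of a different nature and uses the equation satisfied by $\widehat{w}$ in the interior $r>0$, which your purely boundary-level analysis never exploits. On the microlocal support of $\mathfrak{B}_s\mathcal{B}_y$ one has $|s-s_0|\leq\alpha_0$, $|\sigma|\leq\alpha_1$, $|\eta|_y\leq\sqrt{3}$, and at $r=0$ the conjugated principal symbol reads $p_\varphi=(\varrho+iD)^2+\sigma^2+R(y,0,\eta)$; for $D$ large this has no real zero in $\varrho$ and its two roots $\varrho_\pm=-iD\pm i\sqrt{\sigma^2+R}$ both satisfy $\Im\varrho_\pm<0$. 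This is exactly the elliptic boundary region: one has a semiclassical factorization $P_\varphi\approx(hD_r-\Lambda_+)(hD_r-\Lambda_-)$ with tangential operators $\Lambda_\pm$ whose principal symbols have negative imaginary part, so homogeneous solutions grow like $e^{cr/h}$ as $r$ increases. Since $P_\varphi\widehat{w}=h^2e^{\varphi/h}[P,\chi]\widehat{v}_\Lambda$ vanishes for $|s-s_0|<\tfrac{3s_*}{2}$ and $r<\tfrac{r_0}{2}$ (and the $|s-s_0|\geq\tfrac{3s_*}{2}$ part is annihilated exactly by the cutoff $b_0$ in $\mathfrak{B}_s$), an elliptic-parametrix / ODE-in-$r$ argument then shows that the microlocalized trace at $r=0$ is $O(e^{-c/h})\|\widehat{w}\|_{L^2(\mathcal{Z})}$, hence $O(h^N)$ for every $N$. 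The $r$-variable --- completely absent from your scheme --- is what supplies the gain.
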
	

Since the proof of Claim \ref{claim-2} follow the same steps in \cite[Claim 1]{Lebeau-Felipe}, we omit it. 

\begin{proposition}\label{Proposition:(1-Bs)w_1--estimate}
Let $w_{1}$ be defined as
	\begin{equation*}
		w_{1}(s,y) := e^{\tfrac{\varphi_{0}(s)}{h}}\chi_{0}(s)\sum_{\mu_{j} \leqslant \Lambda}A_{j}(s)\mathcal{B}_{y}\Bigl(\Delta e_{j}\bigl|_{\partial\Omega}\Bigr)(y,0).
	\end{equation*}
There exist positive constants $C$, $\delta_{0}$ and $c_{0}$ such that, for all $\delta \in (0,\delta_{0}]$, the following estimate holds:
	\begin{equation}\label{1-B-G1-estimate}
		\|(1 - \mathfrak{B}_{s})w_{1}\|_{H_{sc}^{1}(r=0)^{n}}^{2} \leq Ce^{\tfrac{2(\varphi_{0}(s_{0}) - c_{0})}{h}}\|\widehat{v}_{\Lambda}\|_{H^{1}(\mathcal{Z})^{n-1}}^{2}.
	\end{equation}
\end{proposition}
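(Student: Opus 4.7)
The approach is to exploit that $w_{1}$ is semiclassically concentrated in low $\sigma$-frequency in the $s$-direction, on which $\mathfrak{B}_{s}$ acts essentially as the identity; therefore $(1-\mathfrak{B}_{s})w_{1}$ should be exponentially small in $1/h$. The plan is as follows. I would first choose $\alpha_{0}\geq 4s_{*}$ so that $b_{0}(s-s_{0})\equiv 1$ on $\supp(\chi_{0})$; then $(1-\mathfrak{B}_{s})$ acts on $w_{1}$ essentially as the semiclassical Fourier multiplier $(1-b_{1}(\sigma))$ in $s$, up to an $\mathcal{O}(h^{\infty})$ smoothing remainder coming from the $b_{0}$-localization. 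Since $\partial_{y}$ commutes with $\mathfrak{B}_{s}$, and $h\partial_{s}$ only multiplies the Fourier symbol by $\sigma$, the $H^{1}_{sc}(r=0)$-norm reduces, by Plancherel, to a pointwise control of the semiclassical Fourier transform of $w_{1}$ in $s$ on $\{|\sigma|\geq \alpha_{1}/2\}$.

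The analytic core is to prove, for each $j$ with $\mu_{j}\leq \Lambda$, the bound
$$|\mathcal{F}_{h}[e^{\varphi_{0}/h}\chi_{0}A_{j}](\sigma)|\leq C_{N}\,|a_{j}|\,\mu_{j}^{-1/2}\,e^{(\varphi_{0}(s_{0})-2c_{0})/h}\langle\sigma\rangle^{-N}\quad\text{for }|\sigma|\geq \tfrac{\alpha_{1}}{2},$$
for any $N\in\mathbb{N}$ and some $c_{0}>0$. Writing $A_{j}(s)=a_{j}(e^{s\sqrt{\mu_{j}}}-e^{-s\sqrt{\mu_{j}}})/(2\sqrt{\mu_{j}})$ and setting $\tau_{j}:=h\sqrt{\mu_{j}}\in[0,\delta]$, the phase of the Fourier integral is $\Phi_{\pm}(s)/h=(-is\sigma+\varphi_{0}(s)\pm s\tau_{j})/h$. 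Since $\varphi_{0}(s)=e^{-D(s-s_{0})^{2}}$ is entire and attains its maximum at $s=s_{0}$, I would deform the integration contour via $s\mapsto s-i\lambda\,\sgn(\sigma)$ for a small fixed $\lambda>0$; the real part of the shifted phase becomes $\varphi_{0}(s)-\lambda|\sigma|\pm s\tau_{j}+\mathcal{O}(\lambda^{2})$. Choosing $\lambda$ small enough that the $\mathcal{O}(\lambda^{2})$ correction does not exceed $\lambda\alpha_{1}/8$, and then $\delta_{0}$ small enough that $|s\tau_{j}|\leq \lambda\alpha_{1}/4$ for every $\delta\leq\delta_{0}$, this real part is bounded by $\varphi_{0}(s_{0})-\lambda\alpha_{1}/8=:\varphi_{0}(s_{0})-2c_{0}$. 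The polynomial $\langle\sigma\rangle^{-N}$ decay follows from iterated integration by parts in $s$, using $|\partial_{s}\Phi_{\pm}|\geq|\sigma|$ on the real axis.

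In the third step, Plancherel in $\sigma$ and the $L^{2}$-boundedness of $\mathcal{B}_{y}$ reduce the $H^{1}_{sc}(r=0)$-norm of $(1-\mathfrak{B}_{s})w_{1}$ to $Ce^{2(\varphi_{0}(s_{0})-2c_{0})/h}$ times $\sum_{\mu_{j}\leq\Lambda}|a_{j}|^{2}\mu_{j}^{-1}\|\Delta\widehat{e}_{j}\|_{L^{2}(\partial\Omega)}^{2}$. By the Sobolev trace theorem and the Stokes regularity $\|\widehat{e}_{j}\|_{H^{2}(\Omega)}\lesssim \mu_{j}$, the resulting expression is controlled by $\|\widehat{v}_{\Lambda}\|_{H^{1}(\mathcal{Z})^{n-1}}^{2}$ up to an $e^{\mathcal{O}(\delta)/h}$ factor stemming from the $\sinh^{2}$ growth in $s$; this factor is swallowed into $c_{0}$ by shrinking $\delta_{0}$ one final time, yielding \eqref{1-B-G1-estimate}. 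The main obstacle is the contour deformation, since $\chi_{0}$ is only smooth and not real-analytic, so one cannot shift the real axis without accounting for the non-holomorphic cutoff. I would circumvent this by introducing an almost analytic extension $\widetilde{\chi}_{0}$ of $\chi_{0}$ and applying Stokes' theorem; the extra area contribution involves $\overline{\partial}\widetilde{\chi}_{0}$, which by construction vanishes to infinite order on $\mathbb{R}$ and thus contributes only an $\mathcal{O}(h^{\infty})$ remainder, well within the target bound.
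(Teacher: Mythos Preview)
Your overall strategy coincides with the paper's: both arguments reduce the estimate to an exponential-decay bound for the oscillatory integral $\int e^{i\rho_{j}^{\pm}(\sigma,t')/h}\widetilde{\chi}(t')\,dt'$ on $\{|\sigma|\geq\alpha_{1}/2\}$. The paper simply quotes this bound from \cite[Lemma~2.6]{Lebeau-Felipe}, whereas you sketch a direct proof via contour deformation together with an almost analytic extension of the non-analytic cutoff. That part of your plan is sound and is exactly how one would prove the cited lemma.

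There is, however, a genuine gap in your final reduction. After invoking trace theorems and Stokes regularity you are left (modulo harmless polynomial factors in $\Lambda$) with the quantity $\sum_{\mu_{j}\leq\Lambda}|a_{j}|^{2}$, which you then assert is controlled by $\|\widehat{v}_{\Lambda}\|_{H^{1}(\mathcal{Z})^{n-1}}^{2}$. But this control of the full $\ell^{2}$-mass of the coefficients by an $(n-1)$-component object is precisely the new content of the proposition, and neither the trace theorem nor Stokes regularity provides it. In the paper this step is handled by Lemma~\ref{Lemma:sum.to.n-1.components}: for each fixed $s$ one uses elliptic regularity on the first $n-1$ components together with the divergence-free identity $\partial_{x_{n}}u_{\Lambda,n}=-\sum_{i<n}\partial_{x_{i}}u_{\Lambda,i}$ and Poincar\'e's inequality to recover $\|u_{\Lambda}(s,\cdot)\|_{L^{2}(\Omega)^{n}}$, hence $\sum_{j}|A_{j}(s)|^{2}$, from $\|\widehat{v}_{\Lambda}(s,\cdot)\|_{L^{2}(\Omega)^{n-1}}$; integrating in $s$ and using $\sinh(x)\geq x$ then yields $\sum_{j}|a_{j}|^{2}\leq C\|\widehat{v}_{\Lambda}\|_{H^{1}(\mathcal{Z})^{n-1}}^{2}$ with no exponential loss (your claimed $e^{\mathcal{O}(\delta)/h}$ factor ``from the $\sinh^{2}$ growth'' does not in fact appear). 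Your sketch never invokes the divergence-free condition, and without it there is no mechanism to pass from $n-1$ components back to the full spectral sum.

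A smaller issue: your Plancherel step implicitly treats the boundary traces $\Delta e_{j}|_{\partial\Omega}$ as orthogonal in $L^{2}(\partial\Omega)$, which they are not. The paper avoids this by using the triangle inequality over $j$ and then Cauchy--Schwarz with the Weyl count $\#\{\mu_{j}\leq\Lambda\}\lesssim\Lambda^{n/2}$; the extra polynomial factor in $\Lambda$ is absorbed by the exponential gain $e^{-c_{0}/h}$.
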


In Proposition \ref{Proposition:(1-Bs)w_1--estimate}, we are estimating the high-frequencies on $\sigma$ of all components of the solution at the boundary by a right-hand side which contain only $n-1$ components of the solution. We emphasize that estimate \eqref{1-B-G1-estimate} does not appear in \cite{Lebeau-Felipe}.  In fact, there a similar estimate is proved,  but it involves all components of the solution on the right-hand side. We prove Proposition \ref{Proposition:(1-Bs)w_1--estimate} in Appendix \ref{Appendix:Proof-Claim.3.2--and--Prop.3.3}.

Therefore, we have that 
	\begin{equation}\label{final-w2-contribution-3}
		\begin{split}
			\|\widehat{w}\|_{H_{sc}^{1}(\mathcal{Z})^{n-1}}^{2} & \leq\ C \Bigl(e^{\tfrac{2(\varphi_{0}(s_{0}) - c_{0})}{h}}\|\widehat{v}_{\Lambda}\|_{H^{1}(\mathcal{Z})^{n-1}}^{2} + h^{3}\left\| e^{\tfrac{\varphi}{h}}\mathrm{P}(\chi\widehat{v}_{\Lambda})\right\|_{L^{2}(\mathcal{Z})^{n-1}}^{2}\Bigr) \\
			& +\ C\Lambda^{-2N}\Bigl( e^{\frac{2\left(\varphi_0(s_{0}) - \epsilon_{0}\right)}{h}}\|\widehat{v}_{\Lambda}\|_{L^{2}(\mathcal{Z})^{n-1}}^{2} + e^{\frac{2C_{0}}{h}}\left\|\widehat{v}_{\Lambda}\right\|_{H^{1}(\mathcal{Z})^{n-1}}^{2(1-\rho)}\left\|\partial_{s}\widehat{v}_{\Lambda}(0,\cdot) \right\|_{L^{2}(\omega)^{n-1}}^{2\rho}\Bigr),
		\end{split}
	\end{equation}
for some positive constants $C_0, C, c_0, \epsilon_0, \Lambda_0$, $\rho \in (0,1)$ and all  $\Lambda \geq \Lambda_{0}$ and $N \in \mathbb{N}$. 
	
\null

\noindent \textbf{Step 2.} \textit{Estimate of the term $\left\| e^{\tfrac{\varphi}{h}}\mathrm{P}(\chi \widehat{v}_{\Lambda})\right\|_{L^{2}(\mathcal{Z})^{n-1}}^{2}$}

Since $\mathrm{P}\widehat{v}_{\Lambda} =0$, we have that
	\begin{equation*}
		\mathrm{P}(\chi\widehat{v}_{\Lambda}) = \chi_{0}\left[\mathrm{P},\chi_{1}\right]\widehat{v}_{\Lambda}  + \left[\mathrm{P},\chi_{0}\right]\chi_{1}\widehat{v}_{\Lambda},
	\end{equation*}
with the commutators $\left[\mathrm{P},\chi_{1}\right]$ and $\left[\mathrm{P},\chi_{0}\right]$ being differential operators of order  one. 

By the construction of the cut-off functions $\chi_{0}$ and $\chi_{1}$, it follows that
	\begin{equation*}
		\supp\left[\mathrm{P},\chi_{0}\right] \subset \mathcal{V}_{1}  \quad \mbox{and} \quad \supp\left[\mathrm{P},\chi_{1}\right] \subset \mathcal{V}_{2}
	\end{equation*}
where 
	\begin{equation}\nonumber\label{V1}
		\mathcal{V}_{1} := \Bigl([s_{0}-2s_{*},s_{0}-\tfrac{3s_{*}}{2}]\cup[s_{0}+\tfrac{3s_{*}}{2}, s_{0}+2s_{*}]\Bigr)\times \partial\Omega\times \left[0,r_{0}\right]
	\end{equation}
and
	\begin{equation}\nonumber\label{V2}
		\mathcal{V}_{2} := \left[s_{0}-2s_{*},s_{0}+2s_{*}\right]\times\partial\Omega\times\left[\tfrac{r_{0}}{2}, r_{0}\right].
	\end{equation}

Then, it follows that
	\begin{equation*}
		\left\| e^{\tfrac{\varphi}{h}}\mathrm{P}(\chi\widehat{v}_{\Lambda})\right\|_{L^{2}(\mathcal{Z})^{n-1}}^{2} \leq 2e^{\tfrac{2\varphi}{h}}\left\|\widehat{v}_{\Lambda}\right\|_{H^{1}(\mathcal{V}_{1})^{n-1}}^{2}  + 2e^{\tfrac{2\varphi}{h}}\left\|\widehat{v}_{\Lambda}\right\|_{H^{1}(\mathcal{V}_{2})^{n-1}}^{2}.
	\end{equation*}

Since
	$$
		\Bigl[s_{0}-2s_{*},s_{0}-\tfrac{3s_{*}}{2}\Bigr]\cup\Bigl[s_{0}+\tfrac{3s_{*}}{2}, s_{0}+2s_{*}\Bigr] \subset \left\{s : |s - s_{0}| > s_{*}\right\},
	$$
from Lemma \ref{claim-1}, there exists $\epsilon_{0} > 0$ such that
	\begin{equation*}
		e^{\tfrac{2\varphi}{h}}\left\|\widehat{v}_{\Lambda}\right\|_{H^{1}(\mathcal{V}_{1})^{n-1}}^{2} \leq e^{\tfrac{2\left(\varphi_{0}(s_{0}) - \epsilon_{0}\right)}{h}}\left\|\widehat{v}_{\Lambda}\right\|_{H^{1}(\mathcal{V}_{1})^{n-1}}^{2}.
	\end{equation*}
Note that, here, the constant $\epsilon_0$ can be taken to be the same as in Proposition \ref{Prop:w2-estimate}.

On the other hand, by applying Proposition \ref{Prop-interpolation-Leb-Fel} component-wise, there exist $C>0$ and $ \rho \in (0,1)$ for which
\begin{equation}\label{estinterV2}
	e^{\tfrac{2\varphi}{h}}\left\|\widehat{v}_{\Lambda}\right\|_{H^{1}(\mathcal{V}_{2})^{n-1}}^{2} \leq Ce^{\tfrac{2D_{1}}{h}} \|\widehat{v}_{\Lambda} \|_{H^{1}(\mathcal{Z})^{n-1}}^{2(1- \rho)}\|\partial_{s}\widehat{v}_{\Lambda}(0,\cdot)\|_{L^{2}(\omega)^{n-1}}^{2\rho},
\end{equation}
where  $D_{1} > 0$ satisfies $\varphi(s,r) \leq D_{1}$ on $\mathcal{V}_2$. Here, since $\mathcal{V}_{2}\subset \mathcal{W}_{r_0}$, the parameter  $\rho$ in \eqref{estinterV2} can be taken as the one given in Proposition \ref{Prop:w2-estimate}.

Therefore, from \eqref{final-w2-contribution-3}--\eqref{estinterV2}, it follows that
	\begin{equation}\label{final-w2-contribution-6}
		\|\widehat{w}\|_{H_{sc}^{1}(\mathcal{Z})^{n-1}}^{2}  \leq C\Biggl(e^{\tfrac{2(\varphi_{0}(s_{0}) - \mu)}{h}}\|\widehat{v}_{\Lambda}\|_{H^{1}(\mathcal{Z})^{n-1}}^{2} + e^{\tfrac{2C_{1}}{h}}\|\widehat{v}_{\Lambda} \|_{H^{1}(\mathcal{Z})^{n-1}}^{2(1- \rho)}\|\partial_{s}\widehat{v}_{\Lambda}(0,\cdot)\|_{L^{2}(\omega)^{n-1}}^{2\rho}\Biggr),
	\end{equation}
for some positive constants $C, C_1, \mu, \Lambda_0$, all $\Lambda \geq \Lambda_0$, $h = \delta\Lambda^{-\frac{1}{2}}$ and $\delta \in (0,\delta_{0}]$.

\null
	
\noindent \textbf{Step 3.} \textit{Conclusion}

We recall  that $\chi_{0}(s)\chi_{1}(r) = 1$ in the open set $\{s : |s - s_{0}| < s_{*}\}\times\{r : 0 \leq  r \leq \frac{r_{0}}{2}\}$. Also, there exists $\alpha > 0$ sufficiently small with  $(s_{0}-\alpha,s_{0}+\alpha) \subset (s_{0}-s_{*},s_{0}+s_{*})$ and 
	\begin{equation*}
		\varphi(s,r) > \varphi_{0}(s_{0}) - \frac{\mu}{2} \quad \mbox{for all}\ (s,r) \in\ \widetilde{\mathcal{U}} := (s_{0}-\alpha,s_{0}+\alpha) \times \left\{r : r < \tfrac{r_{0}}{2}\right\}.
	\end{equation*}

Since $\widetilde{\mathcal{U}}$ is an open subset of $\mathcal{Z}$, estimate \eqref{final-w2-contribution-6} and the fact that $h^{2}\|\widehat{v}_{\Lambda}\|_{H^{1}}^{2} \leq \|\widehat{v}_{\Lambda}\|_{H_{sc}^{1}}^{2}$ yields
	\begin{equation}\label{final-w2-contribution-8}
		\|\widehat{v}_{\Lambda}\|_{H^{1}(\widetilde{\mathcal{U}})^{n-1}}^{2}  \leq Ch^{-2}\Biggl(e^{-\tfrac{\mu}{h}}\|\widehat{v}_{\Lambda}\|_{H^{1}(\mathcal{Z})^{n-1}}^{2} + e^{\tfrac{2C_{1}}{h}}\|\widehat{v}_{\Lambda} \|_{H^{1}(\mathcal{Z})^{n-1}}^{2(1- \rho)}\|\partial_{s}\widehat{v}_{\Lambda}(0,\cdot)\|_{L^{2}(\omega)^{n-1}}^{2\rho}\Biggr)
	\end{equation}
and, since  the set $ \mathcal{U} = \displaystyle \left[\tfrac{1}{4},\tfrac{3}{4}\right] \times \{r : 0 \leq  r \leq \tfrac{r_{0}}{2}\}$ is compact, estimate  \eqref{final-w2-contribution-8} holds in $ \mathcal{U}$.

Thus, Theorem \ref{interpolation-inequality} follows from the fact $\mathcal{W} \subset \mathcal{U} \cup \mathcal{W}_{r_0}$  and Proposition \ref{Prop-interpolation-Leb-Fel} applied to $\mathcal{W}_{r_0}$.  
\end{proof}

\section{Proof of the Spectral Inequality}\label{sec:spct-ineq}
In this section, we use the interpolation inequality given in Theorem \ref{thrm.int.inq} to prove Theorem \ref{spct-ineq-theorem}. As we said before, this inequality plays a key role in establishing Theorem \ref{Theorem:Observability-Inequality}, which will be addressed in the next section.

Let $\widetilde{\omega}$ be a nonempty open set such that $\widetilde{\omega} \subset\subset \omega$, and let $\mathcal{Z}$ and $\mathcal{W}$ be the sets defined in \eqref{Def-sets-Z.and.W}. From Theorem \ref{thrm.int.inq}, we know that
	\begin{equation}\label{proof-des-interp}
		\|\widehat{v}_\Lambda \|_{H^{1}(\mathcal{W})^{n-1}}\ \leqslant\ C \left( e^{-\frac{\mu}{h}}\|\widehat{v}_\Lambda \|_{H^{1}(\mathcal{Z})^{n-1}}\ +\ e^{\frac{C}{h}}\|\widehat{v}_\Lambda \|_{H^{1}(\mathcal{Z})^{n-1}}^{1-\rho}\| \partial_{s}\widehat{v}_\Lambda(0,\cdot) \|_{L^{2}(\tilde{\omega})^{n-1}}^{\rho} \right),
	\end{equation}	
for some positive constants $ \delta_{0}$, $\Lambda_{0}$, $\mu$, $C $, $\rho \in (0,1)$ any  $ \delta \in (0,\delta_{0}]$ and all $ \Lambda \geqslant \Lambda_{0} $.

By noticing that
	\begin{equation*}
		\|\widehat{v}_{\Lambda} \|_{H^{1}(\mathcal{W})^{n-1}}^{2} \geq \|\Delta_{x}\widehat{u}_{\Lambda} \|_{L^{2}(\mathcal{W})^{n-1}}^{2},
	\end{equation*}
using the divergence-free condition, Poincaré's inequality, and the fact that $ \sinh(s) \geqslant s $,  it follows that
	\begin{equation}\label{below-estimate-for-hat(v).Lambda}
	C\|\widehat{v}_{\Lambda} \|_{H^{1}(\mathcal{W})^{n-1}}^{2} 
	 \geq\ \| u_{\Lambda} \|_{L^{2}(\mathcal{W})^{n}}^2 \geq \sum_{\mu_{j} \leqslant \Lambda}|a_{j}|^{2},
\end{equation}
for some  $C>0$.

Also, it is straightforward to see that 
	\begin{equation}\label{estimate-for-hat.v.Lambda.in.H1}
	\|\widehat{v}_{\Lambda}\|^2_{H^{1}(\mathcal{Z})^{n-1}} \leq\ C\Lambda^{2}e^{2\sqrt{\Lambda}}\sum_{\mu_{j}\leqslant \Lambda}|a_{j}|^{2}.
\end{equation}

Take $\delta_{1} \in (0, \delta_{0}]$ such that  $\delta_1 \leq \frac{2\mu}{3}$, and let  $\Lambda_{1}$ be so that $C\Lambda^{2}e^{-\sqrt{\Lambda}}\leq \frac{1}{2}$  for all $ \Lambda \geq \Lambda_{1} $.  Then, combining \eqref{proof-des-interp}, \eqref{below-estimate-for-hat(v).Lambda} and \eqref{estimate-for-hat.v.Lambda.in.H1}, and using that $ h = \delta\Lambda^{-\frac{1}{2}} $,  we obtain
	\begin{equation}\label{proof-des-interp-L2-05}
		\sum_{\mu_{j} \leqslant \Lambda}|a_{j}|^{2} \leq Me^{K\sqrt{\Lambda}}\|\partial_{s}\widehat{v}_{\Lambda}(0,\cdot)\|_{L^{2}(\tilde{\omega})^{n-1}}^{2}, 
	\end{equation}	
for some positive constants $M$ and $K$.
	
Consider $ \theta \in \mathcal{C}_0^{\infty}(\omega)$  with $ 0 \leq \theta \leq 1 $ and $ \theta \equiv 1 $ in $ \widetilde{\omega}$. Thus, we have
\begin{equation} \label{quasespectralprova}
		\begin{split}
		\|\partial_{s}\widehat{v}_{\Lambda}(0,\cdot)\|_{L^{2}(\tilde{\omega})^{n-1}}^{2}  & \leq \int_{\omega} \theta\Biggl|\sum_{\mu_{j}\leq \Lambda}a_{j}\Delta_{x}\widehat{e}_{j}(x)\Biggr|^{2}dx \\ 
		& \leq   \left(\int_{\omega}\Biggl|\sum_{\mu_{j}\leq \Lambda}a_{j}\widehat{e}_{j}(x)\Biggr|^{2}dx\right)^{\frac{1}{2}}\left(\int_{\omega}\Biggl|\Delta_{x}\Biggl(\theta\Delta_{x}\sum_{\mu_{j}\leq \Lambda}a_{j}\widehat{e}_{j}(x)\Biggr)\Biggr|^{2}dx\right)^{\frac{1}{2}} \\
		& \leq C\Lambda^{2}\left(\int_{\omega}\Biggl|\sum_{\mu_{j}\leq \Lambda}a_{j}\widehat{e}_{j}(x)\Biggr|^{2}dx\right)^{\frac{1}{2}}\left(\sum_{\mu_{j}\leq \Lambda}|a_{j}|^{2}\right)^{\frac{1}{2}}.
	\end{split}
\end{equation}

Finally, from \eqref{proof-des-interp-L2-05} and \eqref{quasespectralprova}, we obtain the spectral inequality \eqref{spectral-inequality-1}.

\section{Optimal Observability Inequality}\label{sec:opt-null-cost}
In this section, we prove Theorem \ref{Theorem:Observability-Inequality}. The proof follows some ideas of T. Seidman in \cite{Seidman} (for more details, see \cite{Lebeau-Felipe, LucMiller}) and is based on the obtainment of the following approximate observability estimate. 

\begin{lemma}\label{lemma2} 
Let $ T_{0} > 0 $ and $ \beta > 0 $. Suppose that, for every $ z_{0} \in H^{2}(\Omega)^{n-1} \cap V$, the corresponding solution $z = (z_1, \ldots, z_n)$ (with some pressure) of the adjoint system \eqref{Adjoint.stk.eq} satisfies the approximate observability estimate 
	\begin{equation}\nonumber
		h(T)\|z(\cdot,T)\|_{H}^{2} - g(T)\|z_{0}\|_{H}^{2} \leq \int_{0}^{T}\int_{\omega}\sum_{k=1}^{n-1}|z_{k}(x,t)|^{2}dxdt, \quad \forall T \in (0,T_0],
	\end{equation}
where $ h(T) = h_{0}e^{-2/(d_{2}T)^{\beta}} $ and $ g(T) = g_{0}e^{-2/(d_{1}T)^{\beta}} $, for some positive constants $ h_{0}, g_{0}$ with $d_{1} < d_{2} $.  
	 
Then, for every $ d \in (0, d_{2}- d_{1})$,  there exists $ T' \in (0,T_{0}] $ such that observability estimate \eqref{adj.obs} holds with the observability constant satisfying
	$$
		C_{obs}^{2} \leq \frac{e^{2/(dT)^{\beta}}}{h_{0}}, \quad  \forall T \in (0,T'].
	$$
Moreover, if $ g_{0} \leqslant h_{0} $, then one may take $ d = d_{1} - d_{2} $ and $ T' = T_{0} $.
\end{lemma}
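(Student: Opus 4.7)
The plan is to carry out a Seidman-type telescoping iteration on a geometric partition of $(0,T]$, using the parabolic dissipation of the Stokes semigroup to absorb the lossy term $g(\cdot)\|z_0\|_H^2$ and isolate the observation integral.

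First I would fix $d \in (0, d_2-d_1)$ and pick a ratio $r \in (d_1/d_2, 1)$ with $d_2(1-r) \ge d$; both conditions are simultaneously attainable precisely because $d < d_2-d_1$. Define $S_k := T r^k$ and $\tau_k := S_k - S_{k+1} = T(1-r) r^k$, so $\sum_{k\ge 0} \tau_k = T$ and $S_k \searrow 0$. For $z_0 \in H^2(\Omega)^{n-1}\cap V$, set $N_k := \|z(\cdot,S_k)\|_H^2$; the energy identity $\frac{d}{dt}\|z\|_H^2 = -2\|\nabla z\|_{L^2}^2 \le 0$ gives $N_0 \le N_k$ for every $k$, while continuity of the trajectory at $t=0$ yields $N_k \to \|z_0\|_H^2$. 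Applying the hypothesis to the time-translated problem on $[S_{k+1},S_k]$ produces
\begin{equation*}
h(\tau_k)\, N_k - g(\tau_k)\, N_{k+1} \le I_k, \qquad I_k := \int_{S_{k+1}}^{S_k}\!\int_\omega \sum_{j=1}^{n-1}|z_j|^2\,dx\,dt.
\end{equation*}

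I would then introduce positive weights $w_0 := 1$, $w_{k+1} := w_k\, g(\tau_k)/h(\tau_{k+1})$, multiply the $k$-th inequality by $w_k$, and sum from $0$ to $K-1$; by construction, the terms $w_k g(\tau_k) N_{k+1}$ cancel with $w_{k+1} h(\tau_{k+1}) N_{k+1}$, producing the telescoped bound
\begin{equation*}
h(\tau_0)\, N_0 \le \sum_{k=0}^{K-1} w_k\, I_k + w_K\, h(\tau_K)\, N_K.
\end{equation*}
The main technical step is to show $W:=\sup_k w_k < \infty$ and that the boundary term vanishes as $K\to\infty$. A direct calculation with $\tau_k = T(1-r)r^k$ gives
\begin{equation*}
\log\frac{w_{k+1}}{w_k} = \log\frac{g_0}{h_0} + \frac{2\, r^{-k\beta}}{T^\beta(1-r)^\beta d_1^\beta d_2^\beta}\bigl(d_1^\beta r^{-\beta} - d_2^\beta\bigr),
\end{equation*}
and since $r > d_1/d_2$ the bracketed factor is strictly negative, so these log-increments tend to $-\infty$ like $-r^{-k\beta}$. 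Hence $w_k \to 0$ super-exponentially and $W$ is a finite constant depending only on $r,d_1,d_2,\beta,g_0,h_0$; the same estimate shows $w_K\, h(\tau_K) \to 0$, while $N_K \le \|z_0\|_H^2$ stays bounded, so the boundary term drops in the limit.

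Finally, using $\sum_k I_k \le \int_0^T\!\int_\omega \sum_{j=1}^{n-1}|z_j|^2\,dx\,dt$ and $\tau_0 = T(1-r)$, the passage $K\to\infty$ yields
\begin{equation*}
\|z(\cdot,T)\|_H^2 \le \frac{W}{h_0}\, e^{2/(d_2(1-r)\, T)^\beta} \int_0^T\!\!\int_\omega \sum_{j=1}^{n-1}|z_j|^2\,dx\,dt.
\end{equation*}
Since $d < d_2(1-r)$ strictly, I would absorb the constant $W$ into the exponential by taking $T'>0$ so small that $W \le \exp\bigl(2/(dT)^\beta - 2/(d_2(1-r)T)^\beta\bigr)$ on $(0,T']$; this is possible exactly because of that strict inequality and gives $C_{obs}^2 \le e^{2/(dT)^\beta}/h_0$ on $(0,T']$. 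In the special case $g_0 \le h_0$ the limiting choice $r = d_1/d_2$ is admissible: the exponent in $w_{k+1}/w_k$ vanishes and $w_{k+1}/w_k = g_0/h_0 \le 1$, so $W=1$, no restriction of $T$ is needed, and one obtains $d = d_2-d_1$ with $T' = T_0$. A routine density argument using the regularizing effect of the Stokes semigroup then extends the estimate from $H^2\cap V$ to all of $H$. The main obstacle is getting the weight sequence simultaneously bounded and small, which is exactly where the hypothesis $d_1 < d_2$ enters via the constraint $r > d_1/d_2$.
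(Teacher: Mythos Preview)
Your proposal is correct and follows essentially the same route as the paper, which omits the proof and refers to \cite[Lemma 4.2]{Lebeau-Felipe} (i.e., Miller's direct Lebeau--Robbiano telescoping argument on a geometric time partition). One small inconsistency: you first require $d_2(1-r)\ge d$ but later invoke the strict inequality $d<d_2(1-r)$ to absorb $W$; simply choose $r$ with $d_2(1-r)>d$ from the start (still compatible with $r>d_1/d_2$ since $d<d_2-d_1$), and note that your $W$ may depend on $T_0$ when $g_0>h_0$, which is harmless for the conclusion.
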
 

A proof of Lemma \ref{lemma2} can be achieved by following the same arguments as in \cite[Lemma 4.2]{Lebeau-Felipe}. For this reason, we omit the details. 

Let us now prove Theorem \ref{Theorem:Observability-Inequality}. Since the proof is similar to \cite[Theorem 1.1]{Lebeau-Felipe}, we only present the main ideas.

\begin{proof}[Proof of the Theorem \ref{Theorem:Observability-Inequality}]	
Let $ T \in (0, T_{0}) $ for some $ T_{0} > 0 $ to be chosen later, and let $ z_{0} \in H$. Consider the family of eigenfunctions $\{e_{j}\}_{j\in \mathbb{N}}$ associated with the Stokes eigenvalue problem \eqref{eig.pro}. For any $ \Lambda $, we define 
	$$
		H_{\Lambda} = Span\bigl\{e_{j} : \mu_{j} \leq \Lambda\bigr\}.
	$$

Let $z$ be the solution of \eqref{Adjoint.stk.eq} associated to $z_{0} \in H$. Decomposing $ z_{0} = z_{\Lambda,0} + z_{\Lambda,\perp,0}$, with $ z_{\Lambda,0} \in H_{\Lambda} $ and $ z_{\Lambda,\perp,0} \in H_{\Lambda}^{\perp} $, we split $z$ into its low and high frequency components, namely  $
z = z_{\Lambda} + z_{\Lambda,\perp}$, 
with $z_{\Lambda}(t) \in H_{\Lambda} $ and $z_{\Lambda,\perp}(t) \in H_{\Lambda}^{\perp}$, for every $t>0$.

It is not difficult to see that one has 
	\begin{equation}\label{des.0}
		\|z_{\Lambda,\perp}(\cdot,t)\|_{H} \leq e^{-\Lambda t}\|z_{\Lambda,\perp,0}\|_{H} \quad \forall t>0,
	\end{equation}
and that, for every $ M_{1} > 0$, 
	\begin{equation}\label{des.1}
		\|z(\cdot,\tau)\|_{H}^{2} \leq \frac{1}{M_{1}}e^{\frac{M_{1}}{\tau}}\int_{0}^{\tau}\|z(\cdot,t)\|_{L^{2}(\Omega)}^{2}dt, \quad \quad \forall\ \tau \in (0,T_{0}). 
	\end{equation}

From \eqref{des.1}, and the spectral inequality in Theorem \ref{spct-ineq-theorem}, we have that
\begin{equation}\label{des.2}
		\| z_{\Lambda}(\cdot,\tau) \|_{H}^{2} \leq\ \frac{M}{M_{1}}e^{\frac{M_{1}}{\tau} + K\sqrt{\Lambda}}\int_{0}^{\tau}\sum_{k=1}^{n-1}\|z_{k,\Lambda}(\cdot,t)\|_{L^{2}(\omega)}^{2}dt,
\end{equation}	
for every $\tau \in (0,T_{0})$ and any $M_1>0$, where $z_{k,\Lambda}$ is $k$-th component of $z_{\Lambda}$.

Next, we set the observation time $\tau = \epsilon T$, with $ \epsilon \in (0,1) $ small. Choosing $\sqrt{\Lambda} = \frac{1}{\epsilon T}$, estimate \eqref{des.2} then gives
	\begin{equation}\label{des.4}
		\|z_{\Lambda}(\cdot,T)\|_{H}^{2} \leq \frac{1}{4h(T)}\int_{(1-\epsilon)T}^{T}\sum_{k=1}^{n-1}\|z_{k,\Lambda}(\cdot,t)\|_{L^{2}(\omega)}^{2}dt,
	\end{equation}
with 
	\begin{equation*}
		h(T) := \frac{4 M_{1}}{M}e^{-\frac{M_{1}+K}{\epsilon T}}.
	\end{equation*}	
	
Using that $z_{k,\Lambda} = z_{k} - z_{k,\Lambda,\perp}$, it follows from \eqref{des.4} and energy estimate for $z_{k,\Lambda,\perp}$ in $[(1-\epsilon)T, T]$ that
	\begin{equation*}
	h(T)\|z_{\Lambda}(\cdot,T)\|_{H}^{2}\leq \frac{1}{2}\int_{(1-\epsilon)T}^{T}\sum_{k=1}^{n-1}\|z_{k}(\cdot,t)\|_{L^{2}(\omega)}^{2}dt + \frac{\epsilon T}{2}\sum_{k=1}^{n-1}\| z_{k,\Lambda,\perp}(\cdot,(1-\epsilon)T)\|_{H}^{2}.
\end{equation*}

From this last estimate, we get
	\begin{equation}\label{EstobsapproxABC}
		h(T)\|z(\cdot,T)\|_{H}^{2} \leq \int_{(1-\epsilon)T}^{T}\sum_{k=1}^{n-1}\|z_{k}(\cdot,t)\|_{L^{2}(\omega)}^{2}dt + \epsilon T\sum_{k=1}^{n-1}\| z_{k,\Lambda,\perp}(\cdot,(1-\epsilon)T)\|_{H}^{2} + 2h(T)\| z_{\Lambda,\perp}(\cdot,T)\|_{H}^{2}.
	\end{equation}

Therefore, combining \eqref{des.0} with \eqref{EstobsapproxABC}, and using the fact that $\|z_{\Lambda,\perp,0}\|_{H} \leq \|z_{0}\|_{H}$, we obtain 
	\begin{equation*}
		h(T)\|z(\cdot,T)\|_{H}^{2} - g(T)\|z_{0}\|_{H}^{2} \leq \int_{(1-\epsilon)T}^{T}\sum_{k=1}^{n-1}\|z_{k}(\cdot,t)\|_{L^{2}(\omega)}^{2}dt, 
	\end{equation*}	
for all $T \in (0,T_{0})$, with 
	\begin{equation*}
		g(T) := \bigl(T_{0} + 2h(T_{0})\bigr)e^{-\frac{2(1-\epsilon)}{\epsilon^{2}T}}.
	\end{equation*}	
	
From this last estimate and Lemma \ref{lemma2}, we conclude the proof of Theorem \ref{Theorem:Observability-Inequality}. 
\end{proof}

\newpage

\bigskip
\section{Further Comments and Open Problems}\label{Section:Applications}

The results obtained in this paper have several applications. For instance, they may be applied to investigate the null controllability of some nonlinear  versions of the Stokes system, null controllability on positive measurable sets, as well as the Bang-Bang property in time-optimal control problems for the Stokes system.

This work also opens up some directions for future research, including the extension of our results to more general linearized fluid models or the obtainment of the optimal cost of controllability for the three-dimensional Navier-Stokes system when a single scalar control acts on the velocity field.

\subsection*{Source Term Method}

In this work, we have shown that the cost of controllability for the Stokes system \eqref{stk.eq} satisfies
	$$
		C_{obs}(T) \leq C_{1}e^{C_{2}/T},
	$$
where $C_{1}$ and $C_{2}$ are positive constants. Using this estimate, it is possible to apply the so-called \textit{Source Term Method}, introduced by Liu, Takahashi, and Tucsnak in \cite{LiuTakahashiTucsnak}. This method provides a way to construct a weighted functional space of source terms for which the null controllability of the Stokes system with $n-1$ scalar controls still holds. Finally, it is possible to apply an inverse mapping theorem, or a fixed-point argument, to obtain local null controllability for some nonlinear perturbations of the Stokes system with $n-1$ scalar controls.

\medskip

\subsection*{Null controllability with bounded controls on sets of positive measure}

We can also study the $ L^{\infty}$-null controllability problem for the Stokes system when the control domain $\mathcal{O} \subset \Omega \times (0,T)$ is a measurable subset of positive measure. More precisely, we can ask the following question:
\begin{quote}
\textit{Let $T>0$. Given $u_{0} \in H$, does there exist a control $ f := (f_{1}, \dots, f_{n-1},0) \in L^{\infty}(\mathcal{O})^{n}$ such that the corresponding solution to \eqref{stk.eq} satisfies}
	\begin{equation*}
		u(\cdot,T) = 0 \quad \hbox{in}\ \Omega?
	\end{equation*}
\end{quote}

Adapting the arguments  in \cite{Souza-CS-Zhang}, it is not difficult to see that a positive answer to this question  is a consequence of the following result.

\begin{theorem}\label{obser}
Let $T>0$ and let $\mathcal{O} \subset \Omega \times (0,T)$ be a measurable subset of positive measure. There exists a positive constant $C_{obs}=C(n,\Omega,\mathcal{O},T)$ such that the observability inequality
	\begin{equation}\label{Obsinequality}
		\|z(\cdot,T)\|_{H} \leq C_{obs}\sum_{k=1}^{n-1}\iint_{\mathcal{O}}|z_{k}(x,t)|dxdt
	\end{equation}	
holds for all $z_{0} \in H$, where $z$ (with corresponding pressure) solves the associated adjoint system \eqref{Adjoint.stk.eq}.
\end{theorem}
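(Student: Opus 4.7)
The plan is to follow the strategy of \cite{Souza-CS-Zhang}, in which an analogous statement is established when the observation involves all $n$ components, and to show that the crucial new ingredient of that argument, namely a low-frequency spectral inequality of Lebeau--Robbiano type, is supplied by our Theorem~\ref{spct-ineq-theorem} even though only $n-1$ components are observed. The proof splits naturally into a stationary step and a time-space step, connected by a Lebesgue density/telescoping argument.

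First I would upgrade the spectral inequality \eqref{spectral-inequality-1} to a version whose right-hand side involves an $L^{1}$-norm on a measurable subset $E\subset\Omega$ of positive measure. Fix a Lebesgue density point $x_{0}\in E$ and concentric open balls $B_{r}\subset\subset B_{R}\subset\Omega$ around $x_{0}$. For $\varphi_{\Lambda}=\sum_{\mu_{j}\leq\Lambda}a_{j}e_{j}$ one has, by elliptic regularity for the Stokes operator and Sobolev embedding, a bound $\|\varphi_{\Lambda}\|_{L^{\infty}(B_{R})^{n}}\leq C\,\mu_{\Lambda}^{\alpha}\|\varphi_{\Lambda}\|_{L^{2}(\Omega)^{n}}$ with $\alpha$ depending only on $n$; this polynomial factor is absorbed into $e^{K\sqrt{\Lambda}}$. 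Combining Theorem~\ref{spct-ineq-theorem} applied on $B_{r}$ with this $L^{\infty}$-estimate and the standard annular telescoping at the density point $x_{0}$ (as in \cite{Souza-CS-Zhang}), one obtains constants $M',K'>0$ such that
\begin{equation*}
\sum_{\mu_{j}\leq\Lambda}|a_{j}|^{2}\ \leq\ M'e^{K'\sqrt{\Lambda}}\sum_{k=1}^{n-1}\left\|\sum_{\mu_{j}\leq\Lambda}a_{j}e_{j,k}\right\|_{L^{1}(E)}^{2}.
\end{equation*}
The fact that only $n-1$ components appear here causes no structural change since the $L^{\infty}$-bound on $\varphi_{\Lambda}$ controls each scalar component separately.

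Second, I would combine this stationary inequality with the time-telescoping scheme already used in Section~\ref{sec:opt-null-cost}. Choose a Lebesgue density point $(x_{0},t_{0})\in\mathcal{O}$ of the space-time set $\mathcal{O}$, and construct a decreasing sequence $T_{m}\searrow t_{0}$ in $(t_{0},T)$ together with measurable spatial slices $E_{m}$ concentrated near $x_{0}$ such that $|\mathcal{O}\cap(E_{m}\times(T_{m+1},T_{m}))|\geq\gamma |E_{m}|(T_{m}-T_{m+1})$ for a uniform $\gamma>0$. On each interval $(T_{m+1},T_{m})$, split $z(T_{m+1})$ into its low-frequency part in $H_{\Lambda_{m}}$ and its high-frequency tail (with $\sqrt{\Lambda_{m}}\simeq(T_{m}-T_{m+1})^{-1}$). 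On the low-frequency part apply the stationary $L^{1}$-spectral inequality from the previous step, averaged in $t$ to exploit the measure condition on $\mathcal{O}$, and absorb the high-frequency tail by the dissipation estimate $\|z_{\Lambda_{m},\perp}(\cdot,t)\|_{H}\leq e^{-\Lambda_{m}(t-T_{m+1})}\|z(T_{m+1})\|_{H}$. This yields a recurrence of the form
\begin{equation*}
h(T_{m}-T_{m+1})\|z(\cdot,T_{m})\|_{H}^{2}\ -\ g(T_{m}-T_{m+1})\|z(\cdot,T_{m+1})\|_{H}^{2}\ \leq\ \sum_{k=1}^{n-1}\iint_{\mathcal{O}\cap(\Omega\times(T_{m+1},T_{m}))}|z_{k}|\,dx\,dt,
\end{equation*}
with $h,g$ of the form $e^{-C/\tau^{\beta}}$ for some $\beta>0$. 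Summing telescopically in $m$ and applying Lemma~\ref{lemma2} (in its $L^{1}$-source variant) gives \eqref{Obsinequality}.

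The main obstacle is the passage from $L^{2}$ on a ball to $L^{1}$ on a measurable subset while keeping the exponential factor $e^{K\sqrt{\Lambda}}$. This is where the Lebesgue density argument is delicate: the annular telescoping must be carried out without losing the gain of small measure of $E^{c}\cap B_{r}$ near $x_{0}$, and the polynomial $L^{\infty}$-bound on the low-frequency combinations must be uniform in the components $e_{j,k}$ for $k=1,\dots,n-1$. Since Theorem~\ref{spct-ineq-theorem} already isolates the $n-1$ components inside the $L^{2}$-norm with exactly the same $e^{K\sqrt{\Lambda}}$ rate, the transfer of the arguments of \cite{Souza-CS-Zhang} is essentially mechanical; the only genuinely new input is Theorem~\ref{spct-ineq-theorem} itself.
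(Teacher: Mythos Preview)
The paper does not actually prove this theorem: it only states that the key ingredient is the $L^{1}$-spectral inequality on measurable sets (Theorem~\ref{4241}) and that, once this is available, the proof follows by adapting the arguments of \cite{Souza-CS-Zhang}; the details are deferred to a forthcoming paper. Your proposal is precisely this route---upgrade Theorem~\ref{spct-ineq-theorem} to an $L^{1}$-estimate on a measurable set via a Lebesgue-density/annular telescoping argument, then run the time-telescoping scheme of \cite{Souza-CS-Zhang}---so your outline matches the paper's intended approach and supplies more detail than the paper itself does.
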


In order to prove  Theorem \ref{obser}, we need to extend the spectral inequality given in \eqref{spectral-inequality-1} to the case of measurable sets of positive measure.
\begin{theorem}\label{4241}
Let  $\omega \subset \Omega$ be a measurable set with positive measure. Then, there exists a constant $C = C(n,\Omega,|\omega|) > 0$ such that 
	\begin{equation*}
		\left(\sum_{\mu_j \leq \Lambda}a_{j}^{2}\right)^{1/2}  
		\leq Ce^{C\sqrt{\Lambda}} \int_{\omega}\sum_{k=1}^{n-1}\biggl|\sum_{\mu_{j}\leqslant \Lambda}a_{j}e_{j,k}(x)\biggr|dx,
	\end{equation*}
for all $\Lambda > 0$ and any sequence of real numbers $(a_{j})_{j\geq1} \in \ell^{2}$.
\end{theorem}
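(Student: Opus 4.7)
My plan is to reduce the $L^{1}$-inequality on the measurable set $\omega$ to the $L^{2}$-inequality on an open ball provided by Theorem \ref{spct-ineq-theorem}, via a Lebesgue density argument together with a Bernstein-type bound and a Remez--Nazarov-type inequality applied component-wise.

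Since $|\omega|>0$, I would pick a Lebesgue density point $x_{0}$ of $\omega$ lying in the interior of $\Omega$ and choose $r>0$ so that $B:=B(x_{0},r)\subset\subset\Omega$ and $|E|\geq\tfrac{1}{2}|B|$, where $E:=\omega\cap B$. Writing $v_{k}(x):=\sum_{\mu_{j}\leq\Lambda}a_{j}e_{j,k}(x)$ and applying Theorem \ref{spct-ineq-theorem} to the open set $B$ yields, for $\Lambda\geq\Lambda_{1}$,
\[
\sum_{\mu_{j}\leq\Lambda}|a_{j}|^{2}\ \leq\ Me^{K\sqrt{\Lambda}}\int_{B}\sum_{k=1}^{n-1}|v_{k}(x)|^{2}\,dx
\]
(the finitely many low frequencies $\Lambda<\Lambda_{1}$ being absorbed into the constant via the injectivity of the observation map on the corresponding finite-dimensional span). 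Then Hölder's inequality together with the Bernstein-type bound
\[
\|v_{k}\|_{L^{\infty}(B)}\ \leq\ C\,\Lambda^{s/2}\Bigl(\sum_{\mu_{j}\leq\Lambda}|a_{j}|^{2}\Bigr)^{1/2},\qquad s>\tfrac{n}{2},
\]
which follows from Sobolev embedding combined with Theorem \ref{embb} and the identity $\|\mathcal{A}^{s/2}v\|_{L^{2}}^{2}=\sum\mu_{j}^{s}|a_{j}|^{2}\leq\Lambda^{s}\sum|a_{j}|^{2}$, converts the $L^{2}$-integral on $B$ into an $L^{1}$-integral on $B$, at the price of a polynomial factor in $\Lambda$.

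The key ingredient is then a Remez--Nazarov-type estimate
\[
\int_{B}|v_{k}|\,dx\ \leq\ Ce^{C\sqrt{\Lambda}}\int_{E}|v_{k}|\,dx,\qquad k=1,\ldots,n-1,
\]
which I expect to derive from the fact that each $v_{k}$ is real-analytic on $\Omega$ with a complex extension to a neighborhood of $\overline{B}$ satisfying Bernstein-type bounds of order $\sqrt{\Lambda}$: Nazarov's inequality for such quasi-polynomials of degree $\sqrt{\Lambda}$ on sets of positive density then produces the exponential factor. Plugging the three displayed bounds together, summing over $k$, dividing by $\bigl(\sum|a_{j}|^{2}\bigr)^{1/2}$, and absorbing the polynomial factor $\Lambda^{s/2}$ into $e^{C\sqrt{\Lambda}}$ yields the desired inequality, since $E\subset\omega$. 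The main obstacle is the Remez--Nazarov step with the sharp rate $e^{C\sqrt{\Lambda}}$ and constants depending only on $n,\Omega,|\omega|$; this can be handled either by a direct Nazarov-type inequality for sums of Stokes eigenfunctions or by running an adapted propagation-of-smallness argument in the spirit of Section \ref{sec:proof-interp-ineq}, as in \cite{Souza-CS-Zhang}. A minor subtlety is that the right-hand side of Theorem \ref{spct-ineq-theorem} involves only $n-1$ components, so the component-wise propagation must be compatible with the restricted observation, which is automatic here.
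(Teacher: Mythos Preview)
The paper does not actually prove Theorem \ref{4241}: it is announced in Section \ref{Section:Applications} with the explicit remark that a complete proof will appear in a forthcoming paper. There is therefore no authors' argument to compare against, and what follows is an assessment of your outline on its own merits.

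The decisive step is the Remez--Nazarov inequality $\int_{B}|v_{k}|\leq Ce^{C\sqrt{\Lambda}}\int_{E}|v_{k}|$, and here the proposal is incomplete. The assertion that each $v_{k}$ admits a holomorphic extension with Bernstein bounds of order $\sqrt{\Lambda}$ is precisely what must be established, and it does not follow from real-analyticity alone: the scalar equation satisfied by $v_{k}$ carries the pressure term $\partial_{x_{k}}p$, so the classical Donnelly--Fefferman vanishing-order bound for sums of Laplace eigenfunctions is not directly available. The paper's own route for open $\omega$ removes the pressure by passing to $\Delta_{x}\widehat{u}_{\Lambda}$ in the augmented domain $\mathcal{Z}$, where a genuine scalar Laplace equation holds (see \eqref{elptc-system-2}); carrying that propagation of smallness from open to measurable sets is exactly the work being deferred, and invoking \cite{Souza-CS-Zhang} (which treats all $n$ components) does not by itself supply the $(n-1)$-component version.

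There is also a mismatch with the claimed dependence $C=C(n,\Omega,|\omega|)$. Your Lebesgue density point produces a ball $B$ whose radius and location depend on the specific set $\omega$, not merely on $|\omega|$; consequently the constants $M,K,\Lambda_{1}$ obtained from Theorem \ref{spct-ineq-theorem} applied to $B$, as well as the compactness constant handling $\Lambda<\Lambda_{1}$, all depend on $\omega$ itself. A pigeonhole over a fixed finite cover of $\Omega$ by balls, combined with a Remez constant depending only on the density ratio $|E|/|B|$, would be needed to repair this, and both points would have to be made explicit.
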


A complete proof of Theorems \ref{obser} and \ref{4241} will be presented in a forthcoming paper. 

\begin{obs}
It is worth mentioning that Theorem \ref{4241} provides a complete solution to the open problem proposed in \cite[Remark~3.5]{lucero_kevin}.
\end{obs}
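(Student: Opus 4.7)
The statement to be justified is a remark asserting that Theorem~\ref{4241} resolves an open problem stated in \cite[Remark~3.5]{lucero_kevin}; there is no new mathematical content to prove, only a matching argument to make explicit. The plan is therefore to produce a short verification: first recall the precise formulation of the open problem from \cite[Remark~3.5]{lucero_kevin}, then exhibit a direct correspondence between its hypotheses and conclusion and the hypotheses and conclusion of Theorem~\ref{4241}.

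First, I would quote or paraphrase the open problem. According to the context of \cite{lucero_kevin}, the authors establish a spectral inequality for low frequencies of the Stokes operator with $n-1$ components of observation but only over a \emph{nonempty open} subset $\omega\subset\Omega$, and explicitly ask in Remark~3.5 whether the same inequality remains valid when $\omega$ is merely a measurable subset of $\Omega$ of positive Lebesgue measure (possibly with the $L^1$-norm on the right-hand side, as is customary in the measurable-set framework initiated by Apraiz--Escauriaza--Fern\'andez-Bonder and used in \cite{Souza-CS-Zhang}).

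Next, I would observe that the statement of Theorem~\ref{4241} matches this formulation verbatim: the set $\omega\subset\Omega$ is only assumed to be measurable with $|\omega|>0$, the observation is in terms of $n-1$ components $e_{j,k}$ of the eigenfunctions of the Stokes operator, and the upper bound $Ce^{C\sqrt{\Lambda}}$ in terms of the frequency $\Lambda$ is the same spectral growth as in the open subset case. The constant $C$ depends only on $n$, $\Omega$, and $|\omega|$, exactly as required to extend the open-set inequality of \cite{lucero_kevin} to the measurable setting.

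The only substantive verification is that the $L^1$-norm on the right-hand side of Theorem~\ref{4241} is indeed the natural and expected replacement for the $L^2$-norm appearing in the open subset case; this follows the standard convention in the measurable-set setting (since characteristic functions of measurable sets are not smooth enough to allow a direct $L^2$ bound, one works with the $L^1$-norm and recovers $L^\infty$-controls via the bang-bang principle, as in \cite{Souza-CS-Zhang}). With this identification done, the remark is justified by pure inspection. No obstacle is expected since there is nothing to prove beyond the side-by-side comparison; the mathematical work was carried out in the statement (and announced proof) of Theorem~\ref{4241} itself.
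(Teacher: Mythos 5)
Your overall strategy is right and is essentially the only one available: the statement is a remark, not a theorem, and the only "proof" possible is to check that the statement of Theorem~\ref{4241} answers the question posed in the cited reference. The paper itself gives no proof at all of this remark (nor of Theorem~\ref{4241} --- its proof is explicitly deferred to a forthcoming paper), so there is no internal argument to compare against.

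That said, your factual characterization of what is done in \cite{lucero_kevin} looks wrong, and the slip matters for the match you are trying to make. You assert that the authors there ``establish a spectral inequality for low frequencies of the Stokes operator with $n-1$ components of observation but only over a nonempty open subset'' and then ask in Remark~3.5 whether it extends to measurable sets. If that were so, the present paper's Theorem~\ref{spct-ineq-theorem} (the $n-1$-component spectral inequality on an open $\omega$) would not be a new contribution --- yet the authors explicitly present it as novel in the abstract and introduction. The more coherent reading is that \cite{lucero_kevin} proves a spectral inequality involving all $n$ components and leaves open the question of a version with fewer components and/or on measurable sets (possibly with an $L^1$ right-hand side, as needed for $L^\infty$ controls and the bang-bang property). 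Under that reading the present paper's Theorem~\ref{spct-ineq-theorem} already answers the ``open-set, $n-1$-component'' part, and Theorem~\ref{4241} is the natural one to cite precisely because it simultaneously handles measurable sets, $n-1$ components, and the $L^1$ norm. Your verification strategy stands; the description of the cited open problem should be corrected, since the match is only as good as the statement you are matching against.
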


\medskip


\subsection*{Time Optimal Control for the Stokes System in the bi-dimensional case}

The results of this article can be applied to   give a positive answer to the following question   raised in~\cite[Remark~3.9]{Souza-CS-Zhang}:
\begin{quote}
	\textit{In the bi-dimensional case, can one get the stronger bang-bang property, for time optimal controls for the Stokes system?}
\end{quote}

To address this question, for a given $\omega \subset \Omega$ of positive measure and any $M>0$, we define the \textit{admissible control set} as
\begin{equation*}
	\mathcal{U}_{ad}^{M} := \Bigl\{(v_1,v_2) \in L^{\infty}(\omega\times[0,\infty))^2 : v_{2} \equiv 0 \hbox{ and } |v_{1}(x,t)| \leq M \hbox{\, a.e. in \,} \omega\times[0,\infty)\Bigr\}
\end{equation*}
and, for a  given initial state $u_{0} \in H$, the \textit{set of reachable state} from $u_0$ associated with controls in $\mathcal{U}_{ad}^{M}$  defined by
\begin{equation*}
	\mathcal{R}\left(u_{0},\mathcal{U}_{ad}^{M}\right) := \Bigl\{u(\cdot,\tau) : \tau > 0 \hbox{ and } u \hbox{ is the solution of \eqref{stk.eq} with } v \in \mathcal{U}_{ad}^{M} \Bigr\}.
\end{equation*}

Using Theorem \ref{obser}, we can show that $ 0 \in \mathcal{R}\left(u_0,\mathcal{U}_{ad}^{M}\right)$ and, therefore,  the following \textit{time-optimal control problem} is well-defined: 
\begin{quote}
\textit{Given $u_{0} \in H$ and $u_{f} \in \mathcal{R}\left(u_{0},\mathcal{U}_{ad}^{M}\right)$, find $ v^{\star} \in \mathcal{U}^{M}_{ad}$ such that the corresponding solution $u^{\star}$ of \eqref{stk.eq} satisfies}
	\begin{equation}\label{timeoptimalproblem0}
		u^{\star}(\tau^{\star}(u_{0},u_{f})) = u_{f},
	\end{equation}
\textit{where $\tau^{\star}(u_{0},u_{f})$ is the minimal time needed to steer the initial datum $u_{0}$ to the final state $u_{f}$ with controls in $\mathcal{U}_{ad}^{M}$, i.e.}
	\begin{equation}\label{timeoptimalproblem}
		\tau^{\star}(u_{0},u_{f}) = \min_{ v \in \mathcal{U}_{ad}^{M}}\left\{\tau : u(\cdot,\tau) = u_{f}\right\}.
	\end{equation}
\end{quote}

The following result shows that optimal controls satisfy a bang–bang property.

%
\begin{proposition}\label{unique-opt-ctrl}
Let $M>0$. For any $u_{0} \in  H$ and every $u_{f} \in \mathcal{R}\left(u_{0},\mathcal{U}_{ad}^{M}\right)$, the time optimal control problem \eqref{timeoptimalproblem0}-\eqref{timeoptimalproblem}  has a unique solution $ v^{\star}$ which satisfies a bang-bang property: 
$$
|v^{\star}_{1}(x,t)| = M \quad  \text{for a.e.} \ (x,t) \in \omega\times[0,\tau^{\star}(u_{0},u_{f})].
$$
\end{proposition}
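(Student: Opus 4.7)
The plan combines existence by weak-$*$ compactness with a Pontryagin-type maximum principle whose non-degeneracy rests on the $L^{1}$-observability inequality of Theorem~\ref{obser}, exactly as formulated to handle measurable observation sets and a control with only $n-1=1$ non-zero component in the bi-dimensional setting.

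First, for existence, I would take a minimizing sequence $(v^{n},\tau_{n})$ with $v^{n}\in \mathcal{U}_{ad}^{M}$, $u^{n}(\cdot,\tau_{n})=u_{f}$ and $\tau_{n}\downarrow \tau^{\star}$. Since $\mathcal{U}_{ad}^{M}$ is bounded and weakly-$*$ closed in $L^{\infty}(\omega\times(0,T_{\ast}))^{2}$ for any $T_{\ast}>\tau^{\star}$, extract a subsequence converging weakly-$*$ to some $v^{\star}\in\mathcal{U}_{ad}^{M}$. By linearity and continuous dependence of the Stokes flow on the control (weak-strong continuity of the control-to-state map, together with equicontinuity in time from the energy estimate), $u^{n}(\cdot,\tau_{n})\to u^{\star}(\cdot,\tau^{\star})$ in $H$, so $u^{\star}(\cdot,\tau^{\star})=u_{f}$ and $v^{\star}$ is time-optimal.

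Second, to establish the bang-bang property, I would extract a non-trivial Lagrange multiplier via Hahn--Banach separation. For each $\varepsilon\in(0,\tau^{\star})$ the reachable set $\mathcal{R}_{\tau^{\star}-\varepsilon}(u_{0}):=\{u(\cdot,\tau^{\star}-\varepsilon):v\in\mathcal{U}_{ad}^{M}\}$ is convex and closed in $H$ and, by minimality of $\tau^{\star}$, does not contain $u_{f}$; hence there exists $\eta_{\varepsilon}\in H\setminus\{0\}$ separating them. Let $z^{\varepsilon}$ denote the corresponding adjoint Stokes trajectory on $[0,\tau^{\star}]$ with terminal value $\eta_{\varepsilon}$ at $t=\tau^{\star}$. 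A standard duality computation translates the separation into the Pontryagin-type maximum condition
\begin{equation*}
\int_{0}^{\tau^{\star}}\!\int_{\omega} z_{1}^{\varepsilon}(x,t)\,v_{1}^{\star}(x,t)\,dx\,dt \;=\; M\int_{0}^{\tau^{\star}}\!\int_{\omega}|z_{1}^{\varepsilon}(x,t)|\,dx\,dt,
\end{equation*}
which forces $v_{1}^{\star}(x,t)=M\sign(z_{1}^{\varepsilon}(x,t))$ wherever $z_{1}^{\varepsilon}\neq 0$. After normalising the multipliers and using Theorem~\ref{obser} (with $\mathcal{O}=\omega\times(0,\tau^{\star})$) to keep $\|z_{1}^{\varepsilon}\|_{L^{1}}$ uniformly bounded away from zero, passage to the limit $\varepsilon\to 0^{+}$ yields a non-trivial limit costate $z^{\star}$ on $[0,\tau^{\star}]$ inheriting the same maximum condition.

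The closing argument is the key unique-continuation consequence of Theorem~\ref{obser}: the component $z_{1}^{\star}$ cannot vanish on any measurable set $\mathcal{O}\subset\omega\times(0,\tau^{\star})$ of positive measure, for if it did, then \eqref{Obsinequality} applied to this $\mathcal{O}$ would give $\|z^{\star}(\cdot,\tau^{\star})\|_{H}\leq C_{obs}\iint_{\mathcal{O}}|z_{1}^{\star}|=0$, and backward uniqueness for the linear Stokes system would force $z^{\star}\equiv 0$, contradicting non-triviality. Hence $z_{1}^{\star}\neq 0$ a.e.\ on $\omega\times(0,\tau^{\star})$, and consequently $|v_{1}^{\star}(x,t)|=M$ a.e.\ on $\omega\times[0,\tau^{\star}(u_{0},u_{f})]$. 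Uniqueness is then an immediate convexity argument: if $\tilde v^{\star}$ were another time-optimal control, then $\tfrac{1}{2}(v^{\star}+\tilde v^{\star})\in\mathcal{U}_{ad}^{M}$ would, by linearity of the Stokes system, also steer $u_{0}$ to $u_{f}$ at time $\tau^{\star}$, hence be bang-bang; combined with $|v_{1}^{\star}|,|\tilde v_{1}^{\star}|\leq M$, this forces $v_{1}^{\star}=\tilde v_{1}^{\star}$ a.e. The main obstacle in this program is the passage to the limit $\varepsilon\to 0^{+}$ for the adjoint multipliers $z^{\varepsilon}$: it is precisely Theorem~\ref{obser}, resting on the $L^{1}$-spectral inequality of Theorem~\ref{4241} available in the bi-dimensional setting (where $n-1=1$ matches the single non-zero control component), that simultaneously provides the uniform lower bound preventing the limiting costate from collapsing to zero and the unique-continuation property that rules out positive-measure zero sets of $z_{1}^{\star}$.
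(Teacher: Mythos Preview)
The paper does not actually prove this proposition; it explicitly writes ``For the sake of brevity, the proof of Proposition \ref{unique-opt-ctrl} is omitted. It will be given in a forthcoming paper.'' The only indication of the intended method is the surrounding discussion, which points to adapting the arguments of \cite{Souza-CS-Zhang} with Theorem~\ref{obser} (the $L^{1}$-observability inequality on measurable sets, involving only $n-1=1$ component) as the new ingredient. Your proposal follows exactly that scheme---weak-$*$ compactness for existence, a Hahn--Banach/Pontryagin argument to extract a non-trivial costate, the measurable-set observability of Theorem~\ref{obser} to exclude positive-measure zero sets of $z_{1}^{\star}$, and the standard convexity argument for uniqueness---so your approach matches what the paper evidently has in mind.

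One small technical wrinkle: you separate $u_{f}$ from $\mathcal{R}_{\tau^{\star}-\varepsilon}(u_{0})$, obtaining $\eta_{\varepsilon}$ at time $\tau^{\star}-\varepsilon$, but then write the adjoint $z^{\varepsilon}$ and the maximum condition on the full interval $[0,\tau^{\star}]$; as stated, the duality computation only yields the condition on $[0,\tau^{\star}-\varepsilon]$. You can either (i) keep the $\varepsilon$-argument but write the maximum condition on $[0,\tau^{\star}-\varepsilon]$ and pass to the limit there, or (ii) argue directly that $u_{f}\in\partial\mathcal{R}_{\tau^{\star}}(u_{0})$ and separate once at the optimal time, which eliminates the limit in the costates entirely. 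Either route is standard; the substance of your argument is correct.
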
 

For the sake of brevity, the proof of Proposition \ref{unique-opt-ctrl} is omitted. It will be given in a forthcoming paper.

\section*{Cost of Controllability for Linearized fluid systems}



A natural question arising from our results concerns the optimal cost of null controllability for $n$ dimensional linearized fluid models of the form
	\begin{equation}\label{linea-oseen-control}
		u_{t} - \nu\Delta u + (a(x,t) \cdot \nabla)u + (u \cdot \nabla)b(x,t) + \nabla p = f1_{\omega}, 
	\end{equation}
where $a = a(x,t)$ and $b=b(x,t)$ are suitable vector fields and the control acts only  through $n-1$ components of the system.

When the control acts on all $n$ components of the system,  we refer to the recent works  \cite{Remi-Ludovick} and \cite{Remi-Takeo}. In the former,  the authors study a linearization of the Navier–Stokes operator around a laminar flow, under no-slip boundary conditions, and establish a spectral inequality for the associated Oseen operator. This yields upper bounds for the cost of null controllability of order $\mathcal{O}\bigl(e^{C/T^{1+\epsilon}}\bigr)$, for any $\epsilon > 0$. In the later, using precise estimates on the pressure, the authors obtain a Carleman estimate for the Oseen system on bounded domains, with weights analogous to those used for the heat equation, and use it to deduce that the cost of null controllability for the Oseen system  is indeed of order $\mathcal{O}\bigl(e^{C/T}\bigr)$. 

As far as we know, the optimal cost of null controllability for system \eqref{linea-oseen-control} in the case of control having $n-1$ components is still an open problem.

\section*{Cost of controllability for the 3D Navier-Stokes system with one control}

It is well known (see \cite{LionsZuazua}) that, in general, null controllability for the three-dimensional Stokes system cannot be achieved when the control has two vanishing components. Nevertheless, J.-M. Coron and P. Lissy, in \cite{Coron&Lissy}, combining the return method with the fictitious control method, obtained local null controllability for the three-dimensional Navier-Stokes system under the action of a single scalar control.

This naturally raises the question of determining the optimal cost of null controllability  for the three-dimensional Navier-Stokes system. We believe that combining the results of \cite{Remi-Takeo} with the arguments developed in \cite{Coron&Lissy} should lead to a controllability cost of order $\mathcal{O}\bigl(e^{C/T}\bigr)$. However, to the best of our knowledge, this remains an open problem.

%
%
%
\section*{Acknowledgments}
F. W. Chaves-Silva has been partially supported by CNPq-Bolsas de Produtividade em Pesquisa (project  \textit{``Estudos Avan\c{c}ados em Controle de Equa\c{c}\~oes em Derivadas Parciais: Fundamentos e Aplica\c{c}\~oes''}) and by  CNPq-Programa Conhecimento Brasil (project  \textit{``An\'alise e Controle de Fen\^omenos n\~ao-Lineares''}). D.~A.~Souza was partially supported by Grants PID2024-158206NB-I00 and CNS2024-154725 funded by MICINN (Spain). M.~G.~Ferreira-Silva was partially supported by a CAPES scholarship and CNPq Grant 200714/2022-8.
\appendix

\section{Proof of Proposition \ref{Prop:w2-estimate}}\label{Appendix:Proof-of-Prop-3.1}

In this section, we prove Proposition \ref{Prop:w2-estimate}, which was used in the proof of Theorem \ref{thrm.int.inq}.

For completeness, let us recall the definitions of $u_{\Lambda}$ and $v_{\Lambda}$: 
	$$
		u_{\Lambda}(s,x) = \sum_{\mu_{j} \leq \Lambda}a_{j}\frac{\sinh(s\sqrt{\mu_{j}})}{\sqrt{\mu_{j}}}e_{j}(x) \quad \text{and} \quad v_{\Lambda}(s,x) = \Delta_{x}u_{\Lambda}(s,x),
	$$
where $e_{j} = (e_{j,1}, e_{j,2}, \dots, e_{j,n})$ denote the eigenfunctions of the Stokes operator. During this section, the constants $C$ may vary from line to line.

In the following result, we estimate, for each $s \in (0,1)$, the $L^{2}$-norm of $u_{\Lambda}(s,\cdot)$ in the terms of the $L^{2}$-norm of $n-1$ components of $v_{\Lambda}(s,\cdot)$.
\begin{lemma}\label{Lemma:sum.to.n-1.components} 
Let $\widehat{v}_{\Lambda}$ be defined in \eqref{def:widehat(v)_Lambda}. There exists a constant $C > 0$, independent of $\Lambda$ and $s$, such that, for every $s \in (0,1)$, one has
	\begin{equation*}
		\sum_{\mu_{j}\leqslant \Lambda}|A_{j}(s)|^{2} \leq C\|\widehat{v}_{\Lambda}(s,\cdot)\|_{L^{2}(\Omega)^{n-1}}^{2},
	\end{equation*}
where we use the notation $A_{j}(s) = a_{j}\frac{\sinh(s\sqrt{\mu_{j}})}{\sqrt{\mu_{j}}}$.
\end{lemma}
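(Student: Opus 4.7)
\medskip

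\noindent \textbf{Proof plan.}

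The plan is to translate the bound into one about $u_\Lambda$ itself and exploit the divergence-free structure to recover the missing $n$-th component. Since $\{e_j\}_{j\geq 1}$ is an orthonormal basis of $H \hookrightarrow L^2(\Omega)^n$, one has the identity
\begin{equation*}
    \sum_{\mu_j \leq \Lambda} |A_j(s)|^2 \;=\; \|u_\Lambda(s,\cdot)\|_{L^2(\Omega)^n}^2 \;=\; \sum_{k=1}^n \|u_{\Lambda,k}(s,\cdot)\|_{L^2(\Omega)}^2,
\end{equation*}
so it suffices to control each of the $n$ scalar components of $u_\Lambda(s,\cdot)$ by $\|\widehat{v}_\Lambda(s,\cdot)\|_{L^2(\Omega)^{n-1}}$. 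Throughout, $s\in(0,1)$ is fixed and the constants depend only on $\Omega$.

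For the first $n-1$ components, I would observe that $u_\Lambda(s,\cdot) \in V \cap H^2(\Omega)^n$ as a finite linear combination of Stokes eigenfunctions, so each scalar component $u_{\Lambda,k}(s,\cdot)$, $k \in \{1,\dots,n-1\}$, belongs to $H^2(\Omega) \cap H^1_0(\Omega)$. Classical $H^2$-regularity for the scalar Dirichlet Laplacian on the smooth bounded domain $\Omega$ yields
\begin{equation*}
    \|u_{\Lambda,k}(s,\cdot)\|_{H^2(\Omega)} \;\leq\; C\, \|\Delta_x u_{\Lambda,k}(s,\cdot)\|_{L^2(\Omega)}, \qquad k = 1,\dots, n-1,
\end{equation*}
with $C = C(\Omega)$. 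Summing over $k=1,\dots,n-1$ and recalling the definition of $\widehat{v}_\Lambda$ gives
\begin{equation*}
    \sum_{k=1}^{n-1} \|u_{\Lambda,k}(s,\cdot)\|_{H^2(\Omega)}^2 \;\leq\; C\, \|\widehat{v}_\Lambda(s,\cdot)\|_{L^2(\Omega)^{n-1}}^2.
\end{equation*}

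The main obstacle is controlling the last component $u_{\Lambda,n}$, which does not appear at all in $\widehat{v}_\Lambda$. The idea is to recover it through the incompressibility constraint. Since $\nabla_x \cdot u_\Lambda = 0$ in $\Omega$,
\begin{equation*}
    \partial_{x_n} u_{\Lambda,n}(s,\cdot) \;=\; -\sum_{k=1}^{n-1} \partial_{x_k} u_{\Lambda,k}(s,\cdot),
\end{equation*}
and the previous $H^2$ bound yields $\|\partial_{x_n} u_{\Lambda,n}(s,\cdot)\|_{L^2(\Omega)} \leq C\|\widehat{v}_\Lambda(s,\cdot)\|_{L^2(\Omega)^{n-1}}$. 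Using that $u_{\Lambda,n}(s,\cdot)\big|_{\partial\Omega}=0$ and that $\Omega$ is bounded, I would apply a one-dimensional Poincaré inequality along each line parallel to the $x_n$-axis (each such line meets $\partial\Omega$ since $\Omega$ is bounded) to obtain
\begin{equation*}
    \|u_{\Lambda,n}(s,\cdot)\|_{L^2(\Omega)} \;\leq\; C_\Omega\, \|\partial_{x_n} u_{\Lambda,n}(s,\cdot)\|_{L^2(\Omega)} \;\leq\; C\, \|\widehat{v}_\Lambda(s,\cdot)\|_{L^2(\Omega)^{n-1}}.
\end{equation*}

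Assembling the two estimates with the opening identity gives
\begin{equation*}
    \sum_{\mu_j\leq \Lambda} |A_j(s)|^2 \;=\; \sum_{k=1}^n \|u_{\Lambda,k}(s,\cdot)\|_{L^2(\Omega)}^2 \;\leq\; C\,\|\widehat{v}_\Lambda(s,\cdot)\|_{L^2(\Omega)^{n-1}}^2,
\end{equation*}
with $C=C(\Omega)$ independent of $s$ and $\Lambda$, which is the desired inequality.
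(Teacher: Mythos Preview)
Your proof is correct and follows essentially the same route as the paper: elliptic regularity for the scalar Dirichlet Laplacian controls the first $n-1$ components of $u_\Lambda(s,\cdot)$ (and their gradients) by $\|\widehat{v}_\Lambda(s,\cdot)\|_{L^2(\Omega)^{n-1}}$, and the missing $n$-th component is recovered via the divergence-free condition combined with the one-directional Poincar\'e inequality for $H_0^1$ functions. Your write-up simply makes explicit the steps that the paper compresses into a single elliptic estimate and the phrase ``the result then follows from the divergence-free condition and Poincar\'e's inequality.''
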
	
\begin{proof}
Fix $s \in (0,1)$. Since $v_{\Lambda}(s,\cdot) \in H^{2}(\Omega)^{n} \cap H_{0}^{1}(\Omega)^{n}$, one has the elliptic estimate 
	$$
	\|\widehat{u}_{\Lambda}(s,\cdot)\|_{L^{2}(\Omega)^{n-1}}^{2} 
	+ \Big\|\sum_{i=1}^{n-1}\partial_{x_{i}}u_{\Lambda,i}(s,\cdot)\Big\|_{L^{2}(\Omega)}^{2} 
	 \leq  C \|\widehat{v}_{\Lambda}(s,\cdot)\|_{L^{2}(\Omega)^{n-1}}^{2},
	$$
for some $C>0$.

The result then follows from the divergence-free condition and Poincaré’s inequality.
\end{proof}

The next result gives an estimate for the weight function $\varphi$ near the boundary.
\begin{lemma}\label{claim-1}
Let $\varphi$ be defined in \eqref{weigh-function-varphi(s,x)} and let $s_{*}>0$ be small. If $r_{0}>0$ is such that $\sqrt{r_{0}} < s_{*}$, then there exists $ \epsilon_{0} > 0 $ such that
	\begin{equation*}
		\varphi(s,r) \leq \varphi(s_{0},0) - \epsilon_{0} , \quad \forall\ (s,r) \in \left\{(s,r) : |s-s_{0}| > s_{*},\ r \leq r_{0}\right\}.
	\end{equation*}
\end{lemma}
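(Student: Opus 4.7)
\textbf{Proof plan for Lemma \ref{claim-1}.}
The idea is purely computational: since $\psi$ has an explicit form and $\varphi$ is a monotone function of $\psi$, the conclusion reduces to bounding $\psi$ from above on the prescribed set.

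First, I would compute the reference value. From \eqref{weigh-function-psi(s,x)} and \eqref{weigh-function-varphi(s,x)}, one has $\psi(s_0,0) = 0$, hence $\varphi(s_0,0) = 1$.

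Next, on the set $\{(s,r) : |s-s_0| > s_*,\ 0 \leq r \leq r_0\}$ I would estimate
\begin{equation*}
\psi(s,r) = r - (s-s_0)^2 \leq r_0 - s_*^2.
\end{equation*}
By the standing hypothesis $\sqrt{r_0} < s_*$, we have $r_0 - s_*^2 < 0$. Setting $\eta_0 := s_*^2 - r_0 > 0$, it follows that
\begin{equation*}
\varphi(s,r) = e^{D\psi(s,r)} \leq e^{-D\eta_0} < 1 = \varphi(s_0,0).
\end{equation*}
Taking $\epsilon_0 := 1 - e^{-D\eta_0} > 0$ completes the proof.

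There is no substantive obstacle here: the definition of $\psi$ was tailored precisely so that $(s_0,0)$ is its unique maximum on the boundary $\{r=0\}$, and the quadratic penalty $-(s-s_0)^2$ dominates the linear gain $r$ whenever $(s-s_0)^2 > r_0$, which is exactly what the hypothesis $\sqrt{r_0} < s_*$ guarantees. The exponential only provides a convenient reformulation and plays no essential role in this particular lemma (though it is of course crucial for the Hörmander subellipticity condition elsewhere).
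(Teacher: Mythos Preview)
Your proof is correct and matches the paper's argument essentially line for line: both bound $\psi(s,r)\leq r_0 - s_*^2 < 0$ on the given set, use $\varphi(s_0,0)=1$, and take $\epsilon_0 = 1 - e^{D(r_0 - s_*^2)}$ (your $1 - e^{-D\eta_0}$ is the same quantity).
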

\begin{proof}
For $r \leq r_{0}$ and $|s-s_{0}| > s_{*}$, we have $\varphi(s,r) < e^{D(r_{0} - s_{*}^{2})}$. 

Since $\varphi(s_{0},0) = 1$, it follows that
	\begin{equation*}
		\varphi(s,r) \leq \varphi(s_{0},0) - \left[1 - e^{D(r_{0} - s_{*}^{2})}\right].
	\end{equation*}

The conclusion follows by taking $\epsilon_{0} := 1 - e^{D(r_{0} - s_{*}^{2})}$. 
\end{proof}

\null

We now prove Proposition \ref{Prop:w2-estimate}, which establishes that the semiclassical $H^{1}$-norm of $w_{2}$ can be estimated in terms of the $n-1$-component functions $\widehat{w}$ and $\widehat{v}_{\Lambda}$. The proof relies on the two main estimates presented below as claims.
\begin{claim}\label{two-first-terms-estimates-H1sc-norm}
Let $w_{2}$ be defined \eqref{w0-splitted}, $N \in \mathbb{N}$, $0 < \rho < 1$ and $\epsilon_{0} > 0$. Then, there exists a constant $C>0$, independent of $\Lambda$ and $h$, such that for every $h \in (0,h_{1})$ one has
	\begin{equation*}
		\begin{split}
			\int_{0}^{1}\|w_{2}(s,\cdot)\|_{L^{2}(\partial\Omega)^{n}}^{2}ds & + \int_{0}^{1}h^{2}\|\nabla_{\partial\Omega}w_{2}(s,\cdot)\|_{L^{2}(\partial\Omega)^{n}}^{2}ds\\
			& \leq C\Lambda^{-2N}\Bigl(\|\widehat{w}\|_{L^{2}(\mathcal{Z})^{n-1}}^{2} + e^{\frac{2C_{0}}{h}}\left\|\widehat{v}_{\Lambda}\right\|_{H^{1}(\mathcal{Z})^{n-1}}^{2(1-\rho)}\left\|\partial_{s}\widehat{v}_{\Lambda}(0,\cdot) \right\|_{L^{2}(\omega)^{n-1}}^{2\rho}\Bigr).
		\end{split}
	\end{equation*}
\end{claim}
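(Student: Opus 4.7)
The strategy splits into two main parts: (i) a \emph{microlocal smoothing step} producing a pointwise-in-$s$ estimate of the form
\begin{equation*}
\|w_2(s,\cdot)\|_{L^2(\partial\Omega)^n}^2 + h^2\|\nabla_{\partial\Omega}w_2(s,\cdot)\|_{L^2(\partial\Omega)^n}^2 \leq C\Lambda^{-2N}\, e^{2\varphi_0(s)/h}\chi_0(s)^2\|\widehat{v}_\Lambda(s,\cdot)\|_{L^2(\Omega)^{n-1}}^2,
\end{equation*}
and (ii) a \emph{globalization step}, in which the right-hand side is integrated in $s$ and the interior integral over $\Omega$ is decomposed into a near-boundary piece absorbed by $\|\widehat{w}\|_{L^2(\mathcal{Z})^{n-1}}^2$ and a far-from-boundary piece controlled by the interior interpolation of Proposition \ref{Prop-interpolation-Leb-Fel}.

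For step (i), I would exploit that $\mathrm{Id}-\mathcal{B}_y$ is a tangential semiclassical Fourier multiplier which vanishes on all Laplace--Beltrami modes $\zeta_k$ with $\lambda_k^2 \leq 2\Lambda$. The crucial microlocal input is that $u_\Lambda(s,\cdot)$ is a finite spectral sum over $\mu_j\leq\Lambda$, hence coincides with $\psi(\mathcal{A}/\Lambda) u_\Lambda(s,\cdot)$ for any $\psi \in \mathcal{C}_c^\infty(\mathbb{R})$ with $\psi\equiv 1$ on $[0,1]$ and $\psi\equiv 0$ on $[2,\infty)$. Inserting this identity reduces the pointwise bound to controlling the operator norm of
\begin{equation*}
(\mathrm{Id}-\mathcal{B}_y)\circ\mathrm{tr}_{\partial\Omega}\circ\Delta_x\circ\psi(\mathcal{A}/\Lambda): L^2(\Omega)^n \longrightarrow L^2(\partial\Omega)^n,
\end{equation*}
and the incompatibility between the essential supports (tangential frequencies $\gtrsim\sqrt{\Lambda}$ for $\mathrm{Id}-\mathcal{B}_y$, versus total frequencies $\lesssim\sqrt{\Lambda}$ for $\psi(\mathcal{A}/\Lambda)$) delivers an $\mathcal{O}(\Lambda^{-M})$ estimate for any $M$ via the semiclassical composition formula. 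Lemma \ref{Lemma:sum.to.n-1.components} then converts $\|u_\Lambda(s,\cdot)\|_{L^2(\Omega)^n}^2 = \sum_j|A_j(s)|^2$ into $\|\widehat{v}_\Lambda(s,\cdot)\|_{L^2(\Omega)^{n-1}}^2$, yielding the displayed pointwise bound. The semiclassical gradient term is handled in the same way, since $h\nabla_{\partial\Omega}$ is a bounded $h$-pseudodifferential operator whose composition with $\mathrm{Id}-\mathcal{B}_y$ preserves the $\Lambda^{-M}$ decay.

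For step (ii), decompose $\Omega = \Omega_{\mathrm{near}}\cup\Omega_{\mathrm{far}}$ with $\Omega_{\mathrm{near}} := \{x\in\Omega : r(x) \leq r_0/2\}$. On $\Omega_{\mathrm{near}}$, the cutoff $\chi_1\equiv 1$ and the identity $\varphi(s,r) = e^{Dr}\varphi_0(s) \geq \varphi_0(s)$ give
\begin{equation*}
e^{2\varphi_0(s)/h}\chi_0(s)^2\|\widehat{v}_\Lambda(s,\cdot)\|_{L^2(\Omega_{\mathrm{near}})^{n-1}}^2 \leq \int_{\Omega_{\mathrm{near}}} e^{2\varphi(s,x)/h}\chi(s,x)^2|\widehat{v}_\Lambda(s,x)|^2\,dx,
\end{equation*}
which, upon integration in $s$, is bounded by $\|\widehat{w}\|_{L^2(\mathcal{Z})^{n-1}}^2$. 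On $\Omega_{\mathrm{far}}$, the uniform bound $\varphi_0(s)\leq 1$ yields $e^{2\varphi_0(s)/h}\leq e^{2/h}$, and for $s_*$ chosen so that $\supp(\chi_0)\times\Omega_{\mathrm{far}}\subset\mathcal{W}_{r_0}$, Proposition \ref{Prop-interpolation-Leb-Fel} applied to $\widehat{v}_\Lambda$ (which solves $(\partial_{ss}^2+\Delta_x)\widehat{v}_\Lambda = 0$ in $\mathcal{Z}$ with $\widehat{v}_\Lambda(0,\cdot) = 0$) produces the second term on the right-hand side with $C_0 = 1$.

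The main obstacle is the microlocal composition argument in step (i): rigorously proving that $(\mathrm{Id}-\mathcal{B}_y)\circ\mathrm{tr}_{\partial\Omega}\circ\Delta_x\circ\psi(\mathcal{A}/\Lambda)$ is $\mathcal{O}_{L^2\to L^2}(\Lambda^{-\infty})$. This requires semiclassical boundary calculus: one extends $\mathrm{Id}-\mathcal{B}_y$ to a tangential $h$-pseudodifferential operator on a collar neighborhood of $\partial\Omega$, compares this extension with $1-\psi(\mathcal{A}/\Lambda)$ via a parametrix of the Stokes operator near the boundary, and controls the semiclassical remainders by iterated commutators or an Egorov-type transport argument. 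The polynomial losses from elliptic regularity and from the trace estimates are then absorbed by taking the order of the microlocal parametrix sufficiently high relative to $N$.
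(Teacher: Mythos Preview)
Your two-step architecture---a pointwise-in-$s$ microlocal smoothing estimate followed by a near/far decomposition of $\Omega$---is exactly the route the paper takes, and your step (ii) matches the paper's argument essentially line for line (the paper writes the near-boundary contribution directly as $\|\widehat w\|_{L^2(\mathcal Z)^{n-1}}^2$ using $\varphi(s,r)\geq\varphi_0(s)$ and $\chi_1\equiv 1$ on $\{r\leq r_0/2\}$, and bounds the far piece via Proposition~\ref{Prop-interpolation-Leb-Fel} after pulling out the exponential weight).

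The only real difference lies in step (i). The paper does not build the machinery you outline (Stokes functional calculus $\psi(\mathcal A/\Lambda)$, extension of $\mathrm{Id}-\mathcal B_y$ to a collar, boundary parametrix for the Stokes operator). Instead it invokes \cite[Lemma~B.1]{Lebeau-Felipe} as a black box, which delivers directly
\[
\|w_2(s,\cdot)\|_{L^2(\partial\Omega)^n}^2 + h^2\|\nabla_y w_2(s,\cdot)\|_{L^2(\partial\Omega)^n}^2 \leq C_N\,\Lambda^{-2N}\,\Phi_\Lambda(s),
\qquad
\Phi_\Lambda(s):=|\chi_0(s)|^2 e^{2\varphi_0(s)/h}\!\!\sum_{\mu_j\leq\Lambda}|A_j(s)|^2,
\]
and then applies Lemma~\ref{Lemma:sum.to.n-1.components} to convert $\sum_j|A_j(s)|^2$ into $\|\widehat v_\Lambda(s,\cdot)\|_{L^2(\Omega)^{n-1}}^2$, exactly as you do. That lemma encodes precisely the microlocal incompatibility you describe (tangential frequency $\gtrsim\sqrt\Lambda$ for $\mathrm{Id}-\mathcal B_y$ versus total frequency $\lesssim\sqrt\Lambda$ for the spectral truncation), but it operates directly on the finite spectral sum rather than passing through an abstract functional-calculus operator. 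Your route would certainly work, but it is heavier than what is needed: since the result is already available in the cited reference for the Stokes eigenfunction traces, there is no need to re-derive it via a full semiclassical boundary calculus.
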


\begin{claim}\label{Claim:partial_s-estimate}
	Under the same assumptions as in Claim \ref{two-first-terms-estimates-H1sc-norm}, we have
	\begin{equation*}
		\begin{split}
			\int_{0}^{1}h^{2}\left\|\partial_{s}w_{2}(s,\cdot)\right\|_{L^{2}(\partial\Omega)^{n}}^{2}ds \leq C\Lambda^{-2N}\Bigl(& \|\widehat{w}\|_{L^{2}(\mathcal{Z})^{n-1}}^{2} + e^{\frac{2\left(\varphi_0(s_{0}) - \epsilon_{0}\right)}{h}}\|\widehat{v}_{\Lambda}\|_{L^{2}(\mathcal{Z})^{n-1}}^{2}\\
			& +\ e^{\frac{2C_{0}}{h}}\left\|\widehat{v}_{\Lambda}\right\|_{H^{1}(\mathcal{Z})^{n-1}}^{2(1-\rho)}\left\|\partial_{s}\widehat{v}_{\Lambda}(0,\cdot) \right\|_{L^{2}(\omega)^{n-1}}^{2\rho}\Bigr).
		\end{split}
	\end{equation*}
\end{claim}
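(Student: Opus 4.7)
The plan is to reduce this claim to the arguments developed for Claim~\ref{two-first-terms-estimates-H1sc-norm}, treating the additional $\partial_{s}$ derivative by a product-rule decomposition. Since $\mathcal{B}_{y}$ is a tangential pseudodifferential operator, it commutes with $\partial_{s}$, so writing
\begin{equation*}
G(s,y) := e^{\varphi_{0}(s)/h}\chi_{0}(s)\sum_{\mu_{j}\leq\Lambda}A_{j}(s)\Delta e_{j}\bigl|_{\partial\Omega}(y,0),
\end{equation*}
we have $\partial_{s}w_{2}=(\mathrm{Id}-\mathcal{B}_{y})\partial_{s}G$. Expanding by the product rule yields $\partial_{s}G=I_{1}+I_{2}+I_{3}$ with
\begin{equation*}
I_{1}=\frac{\varphi_{0}'(s)}{h}G,\qquad I_{2}=e^{\varphi_{0}/h}\chi_{0}'(s)\sum_{\mu_{j}\leq\Lambda}A_{j}(s)\Delta e_{j}\bigl|_{\partial\Omega},\qquad I_{3}=e^{\varphi_{0}/h}\chi_{0}(s)\sum_{\mu_{j}\leq\Lambda}A_{j}'(s)\Delta e_{j}\bigl|_{\partial\Omega}.
\end{equation*}

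First I would dispatch $I_{1}$. Because $\varphi_{0}'(s)/h$ is a scalar depending only on $s$, it commutes with $\mathcal{B}_{y}$, so $h\,\|(\mathrm{Id}-\mathcal{B}_{y})I_{1}\|_{L^{2}}=|\varphi_{0}'(s)|\,\|w_{2}(s,\cdot)\|_{L^{2}}$, and the uniform bound on $\varphi_{0}'$ reduces this contribution to $C\int_{0}^{1}\|w_{2}(s,\cdot)\|_{L^{2}(\partial\Omega)^{n}}^{2}ds$, which is already controlled by Claim~\ref{two-first-terms-estimates-H1sc-norm}. For $I_{2}$, I would exploit the localization $\supp\chi_{0}'\subset\{|s-s_{0}|\in[\tfrac{3s_{*}}{2},2s_{*}]\}$ together with Lemma~\ref{claim-1} to conclude $e^{2\varphi_{0}(s)/h}\leq e^{2(\varphi_{0}(s_{0})-\epsilon_{0})/h}$ on that set; boundedness of $\mathrm{Id}-\mathcal{B}_{y}$ on $L^{2}(\partial\Omega)$, combined with Lemma~\ref{Lemma:sum.to.n-1.components}, then transfers the $\ell^{2}$-norm of $(A_{j}(s))_{j}$ into $\|\widehat{v}_{\Lambda}(s,\cdot)\|_{L^{2}(\Omega)^{n-1}}^{2}$, producing precisely the intermediate term $e^{2(\varphi_{0}(s_{0})-\epsilon_{0})/h}\|\widehat{v}_{\Lambda}\|_{L^{2}(\mathcal{Z})^{n-1}}^{2}$ on the right-hand side.

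The genuinely new contribution is $I_{3}$, which is the main obstacle. Since $A_{j}'(s)=a_{j}\cosh(s\sqrt{\mu_{j}})$, this term carries an extra factor $\sqrt{\mu_{j}}\leq\sqrt{\Lambda}$ compared with $G$; equivalently, it costs one power of $h^{-1}$. To handle it I would invoke the semiclassical mapping property already used in the proof of Claim~\ref{two-first-terms-estimates-H1sc-norm}: the operator $\mathrm{Id}-\mathcal{B}_{y}$, microlocally supported in $\{|\eta|_{y}>\sqrt{2}\}$ with semiclassical parameter $\Lambda^{-1/2}$, is super-polynomially smoothing when applied to the boundary traces $\Delta e_{j}\bigl|_{\partial\Omega}$ whose tangential frequencies are essentially bounded by $\sqrt{\Lambda}$. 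More concretely, for every $M\in\mathbb{N}$ one has
\begin{equation*}
\Bigl\|(\mathrm{Id}-\mathcal{B}_{y})\sum_{\mu_{j}\leq\Lambda}c_{j}\Delta e_{j}\bigl|_{\partial\Omega}\Bigr\|_{L^{2}(\partial\Omega)^{n}}^{2}\leq C_{M}\Lambda^{-2M}\sum_{\mu_{j}\leq\Lambda}|c_{j}|^{2},
\end{equation*}
uniformly in the coefficients $(c_{j})$ (this is precisely the rapid-decay estimate used in Claim~\ref{two-first-terms-estimates-H1sc-norm}). Applying it pointwise in $s$ to the coefficients $c_{j}:=e^{\varphi_{0}(s)/h}\chi_{0}(s)A_{j}'(s)$, and using $|A_{j}'(s)|\leq \sqrt{\Lambda}\,|A_{j}(s)|\,\coth(s_{*}/\sqrt{\Lambda}^{-1})$ together with Lemma~\ref{Lemma:sum.to.n-1.components} to pass back to $\|\widehat{v}_{\Lambda}(s,\cdot)\|_{L^{2}}^{2}$, gives
\begin{equation*}
h^{2}\bigl\|(\mathrm{Id}-\mathcal{B}_{y})I_{3}(s,\cdot)\bigr\|_{L^{2}(\partial\Omega)^{n}}^{2}\leq C_{M}\Lambda^{-2M+1}\,e^{2\varphi_{0}(s)/h}\,\|\widehat{v}_{\Lambda}(s,\cdot)\|_{L^{2}(\Omega)^{n-1}}^{2}.
\end{equation*}
Taking $M=N+1$ absorbs the extra $\Lambda$ into $\Lambda^{-2N}$. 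Integrating in $s$, we split the integral as $\{|s-s_{0}|\leq s_{*}\}$ and its complement; on the former we bound $e^{2\varphi_{0}(s)/h}\|\widehat{v}_{\Lambda}\|_{L^{2}}^{2}$ by $C\|\widehat{w}\|_{L^{2}(\mathcal{Z})^{n-1}}^{2}$ since $\chi_{0}\equiv1$ and $e^{\varphi_{0}(s)/h}\widehat{v}_{\Lambda}$ coincides with $\widehat{w}$ up to a factor controlled near the boundary, while on the latter Lemma~\ref{claim-1} gives the gain $e^{2(\varphi_{0}(s_{0})-\epsilon_{0})/h}\|\widehat{v}_{\Lambda}\|_{L^{2}(\mathcal{Z})^{n-1}}^{2}$. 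Finally, the interpolation inequality of Proposition~\ref{Prop-interpolation-Leb-Fel}, applied in the region where $\chi_{0}\chi_{1}$ is no longer identically one, introduces the last summand $e^{2C_{0}/h}\|\widehat{v}_{\Lambda}\|_{H^{1}(\mathcal{Z})^{n-1}}^{2(1-\rho)}\|\partial_{s}\widehat{v}_{\Lambda}(0,\cdot)\|_{L^{2}(\omega)^{n-1}}^{2\rho}$, completing the bound. The delicate point throughout is to keep track of the polynomial losses in $\Lambda$ and verify that choosing $M$ sufficiently large effectively absorbs the factor $\sqrt{\Lambda}/\delta=1/h$ introduced by differentiating $\sinh(s\sqrt{\mu_{j}})/\sqrt{\mu_{j}}$.
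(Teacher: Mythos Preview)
Your approach is essentially the one the paper takes: differentiate through $\mathcal{B}_{y}$, apply the rapid-decay estimate for $\mathrm{Id}-\mathcal{B}_{y}$ acting on the traces $\Delta e_{j}|_{\partial\Omega}$ (Lemma~B.1 in \cite{Lebeau-Felipe}), pass to $\sum_{\mu_{j}\leq\Lambda}|A_{j}(s)|^{2}$ via Lemma~\ref{Lemma:sum.to.n-1.components}, and then integrate in $s$ exactly as in Claim~\ref{two-first-terms-estimates-H1sc-norm}. The paper organizes the product rule more compactly: instead of splitting into $I_{1},I_{2},I_{3}$, it introduces an enlarged cutoff $\widetilde{\chi}_{0}\in\mathcal{C}_{c}^{\infty}$ with $\widetilde{\chi}_{0}\equiv1$ on $\supp\chi_{0}$, bounds $h\,|\partial_{s}(A_{j}\chi_{0}e^{\varphi_{0}/h})|\leq C\,\widetilde{\chi}_{0}\,e^{\varphi_{0}/h}|A_{j}|$ in one stroke (using $h\sqrt{\Lambda}=\delta$ for the $A_{j}'$ piece and $|\varphi_{0}'|\leq C$ for the weight), and then reruns the Claim~\ref{two-first-terms-estimates-H1sc-norm} argument for the auxiliary function $\widetilde{w}:=e^{\varphi/h}\widetilde{\chi}_{0}\chi_{1}\widehat{v}_{\Lambda}$; a short lemma (Lemma~\ref{Lemma:estimate.for.w-tilde}) converts $\|\widetilde{w}\|_{L^{2}}^{2}$ back into $\|\widehat{w}\|_{L^{2}}^{2}$ plus the extra term $e^{2(\varphi_{0}(s_{0})-\epsilon_{0})/h}\|\widehat{v}_{\Lambda}\|_{L^{2}}^{2}$ via Lemma~\ref{claim-1}. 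Your term-by-term treatment reaches the same place.

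There is one genuine slip, in your handling of $I_{2}$. Invoking only the $L^{2}(\partial\Omega)$-boundedness of $\mathrm{Id}-\mathcal{B}_{y}$ is not enough: the traces $\Delta e_{j}|_{\partial\Omega}$ are not orthogonal on $\partial\Omega$, so boundedness alone does not yield a bound by $\sum_{j}|A_{j}(s)|^{2}$, and it certainly does not produce the factor $\Lambda^{-2N}$ that the statement requires in front of \emph{every} term. The fix is trivial and you already know it: apply to $I_{2}$ the same rapid-decay estimate you use for $I_{3}$, with coefficients $c_{j}=e^{\varphi_{0}(s)/h}\chi_{0}'(s)A_{j}(s)$. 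This gives both the $\ell^{2}$ control and the $\Lambda^{-2N}$, after which your use of Lemma~\ref{claim-1} on $\supp\chi_{0}'$ and Lemma~\ref{Lemma:sum.to.n-1.components} goes through. (Also, your $\coth$ expression in the bound for $A_{j}'$ is garbled; the clean inequality is $|A_{j}'(s)|\leq (C+\sqrt{\Lambda})|A_{j}(s)|$ on $\supp\chi_{0}$, which after multiplication by $h$ is uniformly bounded since $h\sqrt{\Lambda}=\delta$.)
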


The proof of Proposition \ref{Prop:w2-estimate} then follows by combining  Claim \ref{two-first-terms-estimates-H1sc-norm}, Claim \ref{Claim:partial_s-estimate}, and the definition of the $H_{sc}^{1}$-norm in the boundary.

We now proceed to the proofs of the two Claims.

\begin{proof}[Proof of Claim \ref{two-first-terms-estimates-H1sc-norm}]
Let 
	\begin{equation}\label{Phi(s)-def}
		\Phi_\Lambda(s) :=|\chi_{0}(s)|^{2}e^{\tfrac{2\varphi_{0}(s)}{h}}\sum_{\mu_{j}\leqslant \Lambda}|A_{j}(s)|^{2}.
	\end{equation}

From \cite[Lemma B.1]{Lebeau-Felipe}, for any $N \in \mathbb{N}$ and every $s \in (0,1)$, there exists $C_N >0$ such that 
	\begin{equation}\label{first-estimate-w2}
		\|w_{2}(s,\cdot)\|_{L^{2}(\partial\Omega)^{n}}^{2} + h^{2}\|\nabla_{y}w_{2}(s,\cdot)\|_{L^{2}(\partial\Omega)^{n}}^{2} \leq C_{N}\Lambda^{-2N}\Phi_\Lambda(s).
	\end{equation}

Thus, to prove the result, we just need to estimate $\Phi_\Lambda(s)$. 

First, we notice that
	\begin{equation*}
		\int_{\Omega}|\widehat{w}(s,\cdot)|^{2}dx \geq |\chi_{0}(s)|^{2}\int_{\Omega}\left|e^{\tfrac{\varphi(s,r)}{h}}\widehat{v}_{\Lambda}(s,\cdot)\right|^{2}dx - |\chi_{0}(s)|^{2}\int_{\partial\Omega}\int_{r \geq \frac{r_{0}}{2}}\left|e^{\tfrac{\varphi(s,r)}{h}}\widehat{v}_{\Lambda}(s,y,r)\right|^{2}drdy.
	\end{equation*}
Then, since $\varphi(s,r) \geqslant \varphi_{0}(s)$ for every $ r \geq 0$, from \eqref{Phi(s)-def} and Lemma \ref{Lemma:sum.to.n-1.components}, one has 
	\begin{equation*}
		\Phi_\Lambda(s) \leq C |\chi_{0}(s)|^{2}\int_{\Omega}\left|e^{\tfrac{\varphi(s,r)}{h}}\widehat{v}_{\Lambda}(s,\cdot)\right|^{2}dx,
	\end{equation*}
for some $C>0$. 

Hence, there exists $C>0$ such that 
	\begin{equation}\label{integral-estimate-04}
		\int_{0}^{1}\Phi_\Lambda(s)ds \leq C\left(\|\widehat{w}\|_{L^{2}(\mathcal{Z})^{n-1}}^{2} + \int_{0}^{1}\int_{\partial\Omega}\int_{r \geq \frac{r_{0}}{2}}\left|\chi_{0}(s)e^{\tfrac{\varphi(s,r)}{h}}\widehat{v}_{\Lambda}(s,y,r)\right|^{2}drdyds\right).
	\end{equation}

Using that $\supp(\chi_0) \subset \left(\frac{1}{9},\frac{9}{10}\right)$, from Proposition \ref{Prop-interpolation-Leb-Fel} there exist constants $C, C_0>0$ and $ \rho \in (0,1)$ such that 
	\begin{align} \label{integral-estimate-05}
		\int_{0}^{1}\int_{\partial\Omega}\int_{r \geq \frac{r_{0}}{2}}\left|\chi_{0}(s)e^{\tfrac{\varphi(s,r)}{h}}\widehat{v}_{\Lambda}(s,y,r)\right|^{2}drdyds & \leq  e^{\frac{2C_{0}}{h}}\left\|\widehat{v}_{\Lambda}\right\|_{L^{2}\left(\mathcal{W}_{r_{0}}\right)^{n-1}}^{2} \\
		& \leq Ce^{\frac{2C_{0}}{h}}\left\|\widehat{v}_{\Lambda}\right\|_{H^{1}(\mathcal{Z})^{n-1}}^{2(1-\rho)}\left\|\partial_{s}\widehat{v}_{\Lambda}(0,\cdot) \right\|_{L^{2}(\omega)^{n-1}}^{2\rho}. \nonumber	\end{align}

The result follows by combining \eqref{first-estimate-w2}, \eqref{integral-estimate-04} and \eqref{integral-estimate-05}.
\end{proof}



We now prove Claim \ref{Claim:partial_s-estimate}.  To do this, we consider the function $\chi_0$ given in \eqref{Def:characteristic-functions} and introduce a new  cut-off function $\widetilde{\chi}_0 \in \mathcal{C}_{c}^{\infty}\bigl( (s_{0}-2s_{*}, s_{0}+2s_{*}) \bigr) $, such that $ 0 \leq \widetilde{\chi}_{0} \leq 1 $ and $ \widetilde{\chi}_{0} = 1 $ on $\operatorname{supp}(\chi_{0}) $ and  an auxiliary function $\widetilde{w}$ given by
	\begin{equation*}
		\widetilde{w}(s,y,r) := e^{\frac{\varphi(s,r)}{h}}\widetilde{\chi}_{0}(s)\chi_{1}(r)\widehat{v}_{\Lambda}(s,y,r).
	\end{equation*} 

The following lemma gives an estimate for the $L^{2}$-norm of $\widetilde{w}$ in terms of $\widehat{w}$ and $\widehat{v}_{\Lambda}$.
\begin{lemma}\label{Lemma:estimate.for.w-tilde}
There exists a constant $\epsilon_{0} > 0$, depending only on $r_{0}, s_{*}$ and $D$, such that for every $h\in (0,h_{1})$, the following estimate holds:  
	\begin{equation*}
		\|\widetilde{w}\|_{L^{2}(\mathcal{Z})^{n-1}}^{2} \leq \|\widehat{w}\|_{L^{2}(\mathcal{Z})^{n-1}}^{2} + e^{\frac{2\left(\varphi_0(s_{0}) - \epsilon_{0}\right)}{h}}\|\widehat{v}_{\Lambda}\|_{L^{2}(\mathcal{Z})^{n-1}}^{2}.
	\end{equation*} 
\end{lemma}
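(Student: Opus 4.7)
The plan is to split the $s$-integration domain according to the support structure of the cutoffs $\chi_0$ and $\widetilde{\chi}_0$, and combine the fact that $\widetilde{w}$ and $\widehat{w}$ coincide where $\chi_0=1$ with the exponential decay of the weight $\varphi$ away from the critical point $(s_0,0)$ provided by Lemma \ref{claim-1}.

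More precisely, first I would decompose
\begin{equation*}
\|\widetilde{w}\|_{L^{2}(\mathcal{Z})^{n-1}}^{2}=\int_{\mathcal{Z}_1}e^{\tfrac{2\varphi}{h}}|\widetilde{\chi}_0(s)\chi_1(r)|^2|\widehat{v}_\Lambda|^2\,dsdx+\int_{\mathcal{Z}_2}e^{\tfrac{2\varphi}{h}}|\widetilde{\chi}_0(s)\chi_1(r)|^2|\widehat{v}_\Lambda|^2\,dsdx,
\end{equation*}
where $\mathcal{Z}_1:=\{(s,x)\in\mathcal{Z}:|s-s_0|\leq \tfrac{3s_*}{2}\}$ and $\mathcal{Z}_2:=\mathcal{Z}\setminus\mathcal{Z}_1$. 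On $\mathcal{Z}_1$, by the definition of $\chi_0$ in \eqref{Def:characteristic-functions} we have $\chi_0(s)=1$, and since $\widetilde{\chi}_0\equiv 1$ on $\supp(\chi_0)$, also $\widetilde{\chi}_0(s)=1$. Therefore $\widetilde{w}\equiv\widehat{w}$ on $\mathcal{Z}_1$, and the corresponding piece is bounded by $\|\widehat{w}\|_{L^{2}(\mathcal{Z})^{n-1}}^{2}$.

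On $\mathcal{Z}_2$, the cut-off $\chi_1$ forces $r\in[0,r_0-\varepsilon]\subset[0,r_0]$, and I can apply Lemma \ref{claim-1} with $\tfrac{3s_*}{2}$ in place of $s_*$ (the hypothesis $\sqrt{r_0}<s_*$ made there clearly gives $\sqrt{r_0}<\tfrac{3s_*}{2}$ as well). This yields a constant $\epsilon_0>0$, depending only on $r_0$, $s_*$ and $D$, such that
\begin{equation*}
\varphi(s,r)\leq\varphi_0(s_0)-\epsilon_0\qquad\text{whenever } |s-s_0|>\tfrac{3s_*}{2}\text{ and }r\leq r_0.
\end{equation*}
Combining this pointwise bound with $|\widetilde{\chi}_0|,|\chi_1|\leq 1$ gives
\begin{equation*}
\int_{\mathcal{Z}_2}e^{\tfrac{2\varphi}{h}}|\widetilde{\chi}_0\chi_1|^2|\widehat{v}_\Lambda|^2\,dsdx\leq e^{\tfrac{2(\varphi_0(s_0)-\epsilon_0)}{h}}\|\widehat{v}_\Lambda\|_{L^{2}(\mathcal{Z})^{n-1}}^{2},
\end{equation*}
and adding the two contributions concludes the proof.

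Since the argument is essentially a bookkeeping one, splitting $\mathcal{Z}$ into the region where $\widetilde{w}$ and $\widehat{w}$ agree and the region where the weight is strictly smaller than its value at $(s_0,0)$, no real obstacle is expected. The only subtle point is choosing the splitting threshold at $\tfrac{3s_*}{2}$ so that both $\chi_0=1$ in $\mathcal{Z}_1$ and Lemma \ref{claim-1} is applicable in $\mathcal{Z}_2$.
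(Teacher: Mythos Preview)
Your argument is correct and is essentially the same as the paper's: the paper encodes your domain decomposition as the pointwise inequality $\widetilde{\chi}_0(s)\leq \chi_0(s)+1_{\{3s_*/2<|s-s_0|<2s_*\}}(s)$ and then appeals to Lemma~\ref{claim-1}, which is exactly your split into $\mathcal{Z}_1$ and $\mathcal{Z}_2$. One minor simplification: since $\mathcal{Z}_2\subset\{|s-s_0|>s_*\}$, you can invoke Lemma~\ref{claim-1} as stated rather than with $\tfrac{3s_*}{2}$ in place of $s_*$.
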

\begin{proof}
By construction, the following estimate holds:
	\begin{equation}\label{cut-off-functios.inequality}
		\widetilde\chi_{0}(s) \leq \chi_{0} (s) + 1_{\left\{s \in [0,1] : \frac{3s_{*}}{2}< |s - s_{0}|< 2s_{*}\right\}}(s), \quad \forall\ s \in [0,1].
	\end{equation}
Hence, since $\widehat{w} = e^{\frac{\varphi}{h}}\chi_{0}\chi_{1}\widehat{v}_{\Lambda}$, Lemma \ref{Lemma:estimate.for.w-tilde} follows using \eqref{cut-off-functios.inequality} and Lemma \ref{claim-1}.
\end{proof}

\begin{proof}[Proof of Claim \ref{Claim:partial_s-estimate}]
Since
	\begin{equation*}
		\partial_{s}w_{2}(s,y) = (\mathrm{Id} - \mathcal{B}_{y})\left[\sum_{\mu_{j} \leqslant \Lambda}\partial_{s}\left(A_{j}(s)\chi_{0}(s)e^{\tfrac{\varphi_{0}(s)}{h}}\right)\Delta_{\partial\Omega} e_{j}(y,0)\right],
	\end{equation*}
applying \cite[Lemma B.1]{Lebeau-Felipe} with $M=0$, for all $ N \in \mathbb{N}$ there exists a positive constant $ C > 0 $ such that
	\begin{equation*}
		\left\|\partial_{s}w_{2}(s,\cdot)\right\|_{L^{2}(\partial\Omega)^{n}}^{2} \leq C\Lambda^{-2N}\sum_{\mu_{j} \leqslant \Lambda}\left|\partial_{s}\left(A_{j}(s)\chi_{0}(s)e^{\tfrac{\varphi_{0}(s)}{h}}\right)\right|^{2}.
	\end{equation*}

Since $h\sqrt{\Lambda} = \delta \ll 1$, by the definition of $\widetilde{\chi}_{0}$ and the fact that $|\partial_{s}\varphi_{0}(s)|$ is bounded independently of $s$, one has
	\begin{equation}\label{partial.s(w2)-est-02}
	\left\|h\partial_{s}w_{2}(s,\cdot)\right\|_{L^{2}(\partial\Omega)^{n}}^{2} \leq C\Lambda^{-2N}\widetilde\Phi_\Lambda(s),
\end{equation}
with 
$$
\widetilde\Phi_\Lambda(s) = \sum_{\mu_{j} \leq \Lambda}\left|\widetilde{\chi}_{0}^{2}(s)\right|^{2}e^{\tfrac{2\varphi_{0}(s)}{h}}|A_{j}(s)|^{2}.
$$	

Arguing as in \eqref{integral-estimate-05}, there exist positive constants $C$ and $C_0$ such that
\begin{equation}\label{integral-estimate-06}
	\int_{0}^{1}\widetilde\Phi_\Lambda(s)ds \leq C\left(\|\widetilde{w}\|_{L^{2}(\mathcal{Z})^{n-1}}^{2} + e^{\frac{2C_{0}}{h}}\left\|\widehat{v}_{\Lambda}\right\|_{H^{1}(\mathcal{Z})^{n-1}}^{2(1-\rho)}\left\|\partial_{s}\widehat{v}_{\Lambda}(0,\cdot)\right\|_{L^{2}(\omega)^{n-1}}^{2\rho}\right).
\end{equation}

 It is not difficult to see that Claim \ref{Claim:partial_s-estimate} is then a consequence of \eqref{partial.s(w2)-est-02}, \eqref{integral-estimate-06} and Lemma \ref{Lemma:estimate.for.w-tilde}.
\end{proof}



\section{Proof of  Proposition \ref{Proposition:(1-Bs)w_1--estimate}}\label{Appendix:Proof-Claim.3.2--and--Prop.3.3}

Now we prove Proposition \ref{Proposition:(1-Bs)w_1--estimate}. This result allows us to estimate the high-frequency part in $\sigma$ of all components of $w_{1}$ at the boundary in terms of a right-hand side involving only $n-1$ components in the interior.

\begin{proof}[Proof of Proposition \ref{Proposition:(1-Bs)w_1--estimate}]
Let us set $ \nu_{j} := h^{2}\mu_{j} $, which satisfies $ \sqrt{\nu_{j}} \leq h\sqrt{\Lambda} = \delta $. Then, we write
	\begin{equation*}
		w_{1}(s,y) = \sum_{\mu_{j}\leq\Lambda}a_{j}\chi_{0}(s)\left(\frac{e^{h^{-1}\left[\varphi_{0}(s) + s\sqrt{\nu_{j}}\right]} - e^{h^{-1}\left[\varphi_{0}(s) - s\sqrt{\nu_{j}}\right]}}{2} \right)\mathcal{B}_{j}(y),
	\end{equation*}	
with $ \mathcal{B}_{j} $ given by
	$$
		\mathcal{B}_{j}(y) := \frac{\mathcal{B}_{y}\Bigl(\Delta e_{j}\bigl|_{\partial\Omega}\Bigr)(y,0)}{\sqrt{\mu_{j}}}.
	$$ 
 
From \eqref{pseudo-differential-operator-in-s-var} and making the change of variable $s \mapsto s_{0} + t $, we obtain
	\begin{equation*}
		\mathfrak{B}_{s}w_{1}(t) = \frac{1}{2\pi h}\int e^{\frac{it\sigma}{h}}b_{1}(\sigma)\left[\int e^{-\frac{i t'\sigma}{h}}b_{0}(t')w_{1}(s_{0} +  t',y)d t'\right]d\sigma.
	\end{equation*}

By setting $s' = s_{0} +  t'$, with $t' \in \mathbb{R}$, we may write 
	$$
		\varphi_{0}(s') = \varphi_{0}(s_{0}) - D(s'-s_{0})^{2} + \mathcal{O}\bigl((s' - s_{0})^{4}\bigr),
	$$
where $D>0$, and hence, the real-analytic function defined by 
	$$
		\theta_{0}(t') := \varphi_{0}(s')- \varphi_{0}(s_{0})
	$$ 
can be rewritten as $\theta_{0}(t') = D t'^{2} + \mathcal{O}(t'^{4})$.
	
By defining the phase function 
	\begin{equation}\label{phase-function-rho_j}
		\rho_{j}^{\pm}(\sigma, t') := -\sigma  t' + i\left[\theta_{0}( t') \pm (s_{0} +  t')\sqrt{\nu_{j}}\right],
	\end{equation}
one has
	\begin{equation*}
		(1 - \mathfrak{B}_{s})w_{1}(t) = \frac{e^{\frac{\varphi_{0}(s_{0})}{h}}}{4\pi h}\sum_{\mu_{j}\leq\Lambda}a_{j}\mathcal{B}_{j}(y)\int e^{\frac{it\sigma}{h}}(1 - b_{1}(\sigma))\left[\int e^{h^{-1}i\rho_{j}^{\pm}(\sigma, t')}\widetilde{\chi}( t')d t'\right]d\sigma,
	\end{equation*}
with
	\begin{equation}\label{hat-chi-def}
		\widetilde{\chi}(t') := b_{0}(t')\chi_{0}(s_{0} + t').
	\end{equation}

Since $1 - b_{1}$ is compactly supported in the set $\left\{\sigma : |\sigma| \geq \frac{\alpha_{1}}{2}\right\}$, it follows that
	\begin{equation*}
		\|(1 - \mathfrak{B}_{s})w_{1}\|_{H_{sc}^{1}(\{r=0\})^{n}} \leq \frac{e^{\frac{\varphi_{0}(s_{0})}{h}}}{4 \pi h}\sum_{\mu_{j}\leq\Lambda}|a_{j}|\bigl\|\mathcal{B}_{j}\bigr\|_{H_{sc}^{1}(\{r=0\})^{n}}\int_{|\sigma| \geq \frac{\alpha_{1}}{2}}\langle \sigma \rangle\left|\int e^{h^{-1}i\rho_{j}^{\pm}(\sigma, t')}\widetilde{\chi}( t')d t'\right|d\sigma,		
	\end{equation*}	
where $\langle \sigma \rangle = (1 + \sigma^{2})^{\frac{1}{2}}$.
	
By the definition of $\mathcal{B}_{j}$, and trace theorem, there exist positive constants $C$ and $m'$ such that
	\begin{equation*}
		\bigl\|\mathcal{B}_{j}\bigr\|_{H^{1}(\{r=0\})^{n}} \leq C\Lambda^{m'}.
	\end{equation*}
Therefore, from Lemma \ref{Lemma:sum.to.n-1.components} we obtain
	\begin{equation}\label{(1-Oph)w1-estimate-02}
		\begin{split}
			\bigl\|(1 - \mathfrak{B}_{s})&w_{1}\bigr\|_{H_{sc}^{1}(\{r=0\})^{n}}\\
			&  \leq \frac{C\Lambda^{m}e^{\frac{\varphi_{0}(s_{0})}{h}}}{h}\left(\sup_{j ; \nu_{j}\leq h^{2}\Lambda}\int_{|\sigma| \geq \frac{\alpha_{1}}{2}}\langle \sigma \rangle\left|\int e^{h^{-1}i\rho_{j}^{\pm}(\sigma, t')}\widetilde{\chi}( t')d t'\right|d\sigma\right)\|\widehat{v}_{\Lambda}\|_{H^{1}(\mathcal{Z})^{n-1}},		
		\end{split}
	\end{equation}
where $m := m' + \frac{n}{4} + \frac{1}{2}$ and $C = C(|\Omega|,n) >0$.

Proposition \ref{Proposition:(1-Bs)w_1--estimate} then follows by combining \eqref{(1-Oph)w1-estimate-02}, the fact that $h\sqrt{\Lambda} = \delta \leq 1$, and the following Lemma, whose proof can be found in \cite[Lemma 2.6]{Lebeau-Felipe}.
\begin{lemma}\label{lemma-for-estimate-the-phase-integral}
Let $\rho_{j}^{\pm}$ be the phase function defined in \eqref{phase-function-rho_j}, and let $\widetilde{\chi}$ be as in \eqref{hat-chi-def}. Then, there exist constants $ c_{0} > 0 $ and $ \delta_{0} > 0$ such that, for all $ 0 < \delta \leq \delta_{0} $ and all $ k \in \mathbb{N} $, there exists a constant $ C_{k} > 0 $ satisfying the estimate
	\begin{equation*}
		\left| \int e^{h^{-1}i\rho_{j}^{\pm}(\sigma, t')}\widetilde{\chi}( t')d t' \right|\ \leq\ C_{k}\langle \sigma \rangle^{-k}e^{\tfrac{-c_{0}}{h}}
	\end{equation*}
	uniformly for $ |\sigma| \geq \tfrac{\alpha_1}{2} $ and $ \sqrt{\nu_{j}} \leq \delta $.
\end{lemma}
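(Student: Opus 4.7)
The strategy is to view the integral as the Fourier transform (in $t'$, at frequency $\sigma/h$) of the compactly supported function $F_{j}^{\pm}(t') := e^{-[\theta_{0}(t')\pm (s_{0}+t')\sqrt{\nu_{j}}]/h}\widetilde{\chi}(t')$, and to exploit the Gaussian structure coming from $\theta_{0}(t') = Dt'^{2} + \mathcal{O}(t'^{4})$ with $D > 0$. The dominant factor $e^{-Dt'^{2}/h}$ is a narrow Gaussian of width $\mathcal{O}(\sqrt{h/D})$ concentrated at $t' = 0$, well inside the support of $\widetilde{\chi}$ once $h$ is small, and its Fourier transform at high frequency is again Gaussian and can be computed explicitly.

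The first key step is to complete the square on the leading quadratic-plus-oscillatory part, writing $-Dt'^{2}/h - i\sigma t'/h = -D(t' + i\sigma/(2D))^{2}/h - \sigma^{2}/(4Dh)$. The prefactor $e^{-\sigma^{2}/(4Dh)}$ extracted this way is the source of the exponential decay in $1/h$: for $|\sigma| \geq \alpha_{1}/2$ it is bounded above by $e^{-\alpha_{1}^{2}/(16Dh)}$, which is the candidate $e^{-c_{0}/h}$. To turn this heuristic into a rigorous estimate I would deform the contour of $t'$-integration to $\mathbb{R} - i\sigma/(2D)$, which passes through the complex saddle of the phase; on the shifted contour the remaining integral reduces to an honest Gaussian integral of size $\mathcal{O}(\sqrt{h})$. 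Since $\widetilde{\chi}$ is only $C_{c}^{\infty}$, the shift is implemented via an almost-analytic extension \emph{à la} Nirenberg--Mather, whose $\overline{\partial}$-errors are flat at the real axis and contribute at most $\mathcal{O}(h^{\infty})$.

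Next, I would absorb the sub-leading contributions into a slightly smaller $c_{0}$. The quartic remainder $\mathcal{O}(t'^{4})/h$ coming from $\theta_{0}$ is $\mathcal{O}(1)$ on the Gaussian scale $|t'| \lesssim \sqrt{h/D}$ and exponentially small outside, so it factors out as a bounded perturbation. The linear contribution $\mp(s_{0}+t')\sqrt{\nu_{j}}/h$ perturbs the real part of the exponent by at most $\mathcal{O}(\delta/h)$, and choosing $\delta_{0}$ small enough that $s_{0}\delta_{0} < \alpha_{1}^{2}/(16D)$ guarantees that a strictly positive constant $c_{0}$ survives in both sign choices. The polynomial factor $\langle \sigma \rangle^{-k}$ then comes essentially for free: for large $|\sigma|$ it is dominated by the Gaussian envelope $e^{-\sigma^{2}/(4Dh)}$, while for bounded $|\sigma| \geq \alpha_{1}/2$ it follows from $k$ integrations by parts exploiting the non-stationarity $|\partial_{t'}\rho_{j}^{\pm}|^{2} = \sigma^{2} + (\theta_{0}'(t') \pm \sqrt{\nu_{j}})^{2} \geq \alpha_{1}^{2}/4$, valid uniformly for $\sqrt{\nu_{j}} \leq \delta$ with $\delta$ small.

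The main obstacle I anticipate is the rigorous implementation of the contour deformation, and in particular the uniform control of the $\overline{\partial}$-error terms across the whole family of phases $\rho_{j}^{\pm}$ as $\sqrt{\nu_{j}}$ sweeps through $[0,\delta]$. A cleaner alternative, which I expect to be the one underlying the cited reference, is to avoid contour shifts entirely and instead iterate the complex vector field $L := (h/i\partial_{t'}\rho_{j}^{\pm})\partial_{t'}$ that preserves $e^{i\rho_{j}^{\pm}/h}$: this replaces the Gaussian argument by weighted $L^{1}$-bounds on iterated derivatives of $F_{j}^{\pm}$, and packs both the $\langle \sigma \rangle^{-k}$ and the $e^{-c_{0}/h}$ decay into a careful accounting of the symbol orders and of the smallness of $\sqrt{\nu_{j}}/h \cdot h = \sqrt{\nu_{j}}$.
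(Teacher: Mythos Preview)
The paper does not prove this lemma; it quotes \cite[Lemma~2.6]{Lebeau-Felipe}. Your steepest-descent heuristic---complete the square in the quadratic part of $\theta_0$, extract $e^{-\sigma^2/(4Dh)}$, then absorb the quartic remainder and the linear $\sqrt{\nu_j}$ perturbation into a slightly smaller $c_0$---is precisely the mechanism behind the cited argument. The cleanest rigorous implementation, and the one in that reference, exploits two facts you did not use: $\widetilde\chi\equiv 1$ on a fixed neighborhood $|t'|\le\epsilon$ of the origin, and the phase is \emph{genuinely} real-analytic (since $\theta_0$ is built from $e^{-D(\cdot)^2}$). One splits the integral accordingly: on $|t'|\ge\epsilon$ the real exponent is already $\le -c/h$ because $\theta_0(t')\le\theta_0(\epsilon)<0$, while on $|t'|\le\epsilon$ the integrand is holomorphic and a \emph{true} contour shift by a small fixed $\tau_0$ yields the factor $e^{-|\sigma|\tau_0/h}$, giving both $e^{-c_0/h}$ and the super-polynomial decay $\langle\sigma\rangle^{-k}$ in one stroke. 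Your almost-analytic route is more delicate than you indicate: the $\overline\partial$-remainder $C_N|\mathrm{Im}\,t'|^N$ has to compete against a growing factor $e^{D|\mathrm{Im}\,t'|^2/h}$ coming from $\mathrm{Re}\,\theta_0$ on the shifted region, and this is not $\mathcal{O}(h^\infty)$ unless you also arrange the extension to be holomorphic near $\mathrm{Re}\,t'=0$---which amounts to the splitting argument anyway.

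There is one genuine gap. Your closing alternative---iterating the real vector field $L=(h/i\partial_{t'}\rho_j^\pm)\partial_{t'}$---cannot by itself deliver the factor $e^{-c_0/h}$. Real non-stationary phase gives at best $\mathcal{O}\bigl((h/|\sigma|)^k\bigr)$ for each $k$; computing the $L^1$-norms of the iterated derivatives of the Gaussian amplitude only improves this to $\mathcal{O}\bigl(h^{(k+1)/2}|\sigma|^{-k}\bigr)$. Either way the decay in $h$ is polynomial, strictly weaker than $e^{-c_0/h}$. The exponential gain is an analytic effect and genuinely requires leaving the real axis; no symbol bookkeeping on the real line recovers it.
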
 
\end{proof}


\bibliographystyle{unsrt}
\thebibliography{99}
\addcontentsline{toc}{section}{References}

\bibitem{BP20}\href{https://doi.org/10.1137/19M1252004}{\textsc{J. A. Bárcena-Petisco}, \textit{Uniform controllability of a Stokes problem with a transport term in the zero-diffusion limit},  SIAM J. Control Optim., 58(3) (2020), 1597--1625.}

\bibitem{Book-Boyer} \href{https://link.springer.com/book/10.1007/978-1-4614-5975-0}
{\textsc{F. Boyer}, \textsc{P. Fabrie}, \textit{Mathematical Tools for the Study of the Incompressible Navier-Stokes equations and Related Models}, Springer-Verlag New York, 2013.}

\bibitem{Remi-Ludovick} \href{https://hal.science/hal-03200711}{\textsc{R. Buffe}, \textsc{L. Gagnon}, Spectral inequality for an Oseen operator in a two dimensional channel, preprint, 2021.}

\bibitem{Remi-Takeo} \href{https://hal.science/hal-04979716v1}{\textsc{R. Buffe}, \textsc{T. Takahashi}, \textit{Global Carleman inequalities for the Oseen system. Application to the cost control in small times}, 2025.}



\bibitem{Lebeau-Felipe}\href{https://doi.org/10.1051/cocv/2016034}{\textsc{F. W. Chaves-Silva}, \textsc{G. Lebeau}, \textit{Spectral inequality and optimal cost of controllability for the Stokes system}, ESAIM Control Optim. Calc. Var., 22(4) (2016), 1137--1162.}

\bibitem{Souza-CS-Zhang} \href{https://doi.org/10.1137/18M117652X}{\textsc{F. W. Chaves-Silva}, \textsc{D. A. Souza}\ \textsc{C. Zhang}, \textit{Observability inequalities on measurable sets for the Stokes system and applications}, SIAM J. Control Optim., 58(4) (2020), 2188--2205.}


\bibitem{Coron&Lissy} \href{https://doi.org/10.1007/s00222-014-0512-5}{\textsc{J.-M. Coron}, \textsc{P. Lissy}, \textit{Local null controllability of the three-dimensional Navier–Stokes system with a distributed control having two vanishing components}, Invent. math., 198 (2014), 833--880.}

\bibitem{Coron-Guerrero} \href{https://doi.org/10.1016/j.jde.2008.10.019}
{\textsc{J.-M. Coron}, \textsc{S. Guerrero}, \textit{Null controllability of the $ N- $dimensional Stokes system with $ N-1 $ scalar controls}, J. Differ. Equations, 246(7) (2009), 2908--2921.}



\bibitem{Cara-Guerrero-Imanuvilov-Puel} \href{https://doi.org/10.1137/04061965X}
{\textsc{E. Fernández-Cara}, \textsc{S. Guerrero}, \textsc{O. Y. Imanuvilov}, \textsc{J.-P. Puel}, \textit{Some controllability results for the $N$-dimensional Navier--Stokes and Boussinesq systems with $N-1$ scalar controls}, SIAM J. Control Optim., 45(1) (2006), 146--173.}

\bibitem{Cara-Guerrero-Imanuvilov-Puel-0} \href{https://doi.org/10.1016/j.matpur.2004.02.010}{\textsc{E. Fernández-Cara}, \textsc{S. Guerrero}, \textsc{O. Y. Imanuvilov}, and \textsc{J.-P. Puel}, \textit{Local exact controllability of the Navier–Stokes system}, J. Math. Pures Appl., 83(12) (2004), 1501--1542.}

\bibitem{Cara-Zuazua}\href{https://doi.org/10.57262/ade/1356651338}{\textsc{E. Fernández-Cara, E. Zuazua}, \textit{The cost of approximate controllability for heat equations: the linear case}, Adv. Differential Equations, 5(4--6) (2000), 465-514.}

\bibitem{FU-MORI} \href{https://doi.org/10.3792/pja/1195526510}{\textsc{H. Fujita}, \textsc{H. Morimoto}, \textit{On fractional powers of the Stokes operator}, Proc. Japan Acad., 46(10.S1) (1970), 1141--1143.}



\bibitem{Imanuvilov} \href{https://doi.org/10.1051/cocv:2001103}
{\textsc{O. Y. Imanuvilov}, \textit{Remarks on exact controllability for the Navier--Stokes equations}, ESAIM Control Optim. Calc. Var., 6 (2001), 39--72.}

\bibitem{lucero_kevin} \href{https://doi.org/10.48550/arXiv.2502.03690}{\textsc{K. Le Balc'h, L. de Teresa}, \textit{The Kalman rank condition and the optimal cost for the null-controllability of coupled Stokes systems}, 2025.}

\bibitem{Rousseau-Lebeau} \href{https://doi.org/10.1051/cocv/2011168}
{\textsc{J. Le Rousseau}, \textsc{G. Lebeau}, \textit{On Carleman estimates for elliptic and parabolic operators. Applications to unique continuation and control of parabolic equations}, ESAIM Control Optim. Calc. Var., 18(3) (2012), 712--747.}

\bibitem{Lebeau-Robbiano} \href{https://doi.org/10.1080/03605309508821097}{ \textsc{G. Lebeau, L. Robbiano}, \textit{Contrôle exact de l'équation de la chaleur}, Comm. Partial Differential Equations,  20(1--2) (1995),  335--356.}

\bibitem{LionsZuazua} \href{https://www.taylorfrancis.com/chapters/edit/10.1201/9780203744369-21/generic-uniqueness-result-stokes-system-control-theoretical-consequences-jacques-louis-lions-enrique-zuazua}{\textsc{J.-L. Lions}, \textsc{E. Zuazua}, \textit{A generic uniqueness result for the Stokes system and its control theoretical consequences}, Partial Differential Equations and Applications, Lect. Notes Pure Appl. Math., vol. 177, Dekker, New York, 1996, 221–235.}

\bibitem{LiuTakahashiTucsnak}\href{https://doi.org/10.1051/cocv/2011196}{\textsc{Y.~Liu, T.~Takahashi, and M.~Tucsnak.} \textit{Single input controllability of a simplified fluid-structure	interaction model},	\newblock {\em ESAIM Control Optim. Calc. Var.}, 19(1) (2012), 20--42.}

\bibitem{Martinez} \href{https://link.springer.com/book/10.1007/978-1-4757-4495-8}{\textsc{A. Martinez}, \textit{An introduction to Semiclassical and Microlocal Analysis}, New York, Springer, 2002.}

\bibitem{LucMiller} \href{https://doi.org/10.3934/dcdsb.2010.14.1465}{ \textsc{L. Miller}, \textit{A direct Lebeau-Robbiano strategy for the observability of heat-like semigroups}, Discrete Contin. Dyn. Syst. Ser. B, 14(4) (2010), 1465--1485.}


\bibitem{Seidman}\href{https://doi.org/10.1016/j.jmaa.2007.07.008}{ \textsc{T. I. Seidman},  \textit{How violent are fast controls, III}, J. Math. Anal. Appl., 339(1) (2008), 461--468.}

\bibitem{G.G.Stokes} \href{https://www.biodiversitylibrary.org/item/19878#page/9/mode/1up}{\textsc{G. G. Stokes},\textit{ On the Effect of Internal Friction of Fluids on the Motion of Pendulums}, Transaction of the Cambridge Philosophical Society, 9(Part 2) (1851), 8--106.}







%

\end{document}